\documentclass[a4paper, 11pt,reqno]{amsart}

\usepackage[utf8]{inputenc}
\usepackage[T1]{fontenc}
\usepackage{mathtools, amsthm, amsfonts, amssymb, amsmath}
\usepackage{microtype}

\usepackage{latexsym}
\usepackage{braket}
\usepackage{thmtools}
\usepackage{mathrsfs}
\usepackage{textcomp}
\usepackage{graphicx}
\usepackage{setspace}
\usepackage{nicefrac}
\usepackage{enumerate}
\usepackage{wasysym}
\usepackage[normalem]{ulem}
\usepackage{upgreek}

\usepackage{paralist}
\usepackage{xcolor}
\usepackage{etoolbox}
\usepackage{mathdots}
\usepackage{wrapfig}
\usepackage{floatflt}
\usepackage{tensor} 
\usepackage{lscape}
\usepackage{stmaryrd}
\usepackage[enableskew]{youngtab}
\usepackage{ytableau}
\usepackage{pifont}

\usepackage{tikz}
\usepackage{tikz-cd}
\usepackage{tikz-3dplot}
\usetikzlibrary{arrows}
\usetikzlibrary{matrix}
\usetikzlibrary{positioning}
\usetikzlibrary{calc,fit,backgrounds,decorations.pathmorphing}
\usetikzlibrary{cd}

\usepackage{hyperref}
\hypersetup{
    colorlinks,
  linkcolor=[rgb]{0.3,0.3,0.6},
  citecolor=[rgb]{0.2, 0.6, 0.2},
  urlcolor=[rgb]{0.6, 0.2, 0.2}
}

\usepackage{enumitem}

\newcommand{\vvirg}{ , \dots , }
\newcommand{\ootimes}{ \otimes \cdots \otimes }

\newcommand{\ttimes}{ \times \cdots \times }

\newcommand{\contract}{\rotatebox[origin=c]{180}{ \reflectbox{$\neg$} }}

\newcommand{\bigboxplus}{{\scalebox{1.2}{$\boxplus$}}}

\newcommand{\bfI}{\mathbf{I}}

\newcommand{\bfe}{\mathbf{e}}

\newcommand{\bfm}{\mathbf{m}}
\newcommand{\bfn}{\mathbf{n}}

\newcommand{\bft}{\mathbf{t}}

\newcommand{\bfv}{\mathbf{v}}

\newcommand{\bfz}{\mathbf{z}}

\newcommand{\calC}{\mathcal{C}}

\newcommand{\calT}{\mathcal{T}}

\newcommand{\calZ}{\mathcal{Z}}

\newcommand{\scrE}{\mathscr{E}}

\newcommand{\bbC}{\mathbb{C}}

\newcommand{\bbP}{\mathbb{P}}

\newcommand{\frakg}{\mathfrak{g}}

\newcommand{\bfzeta}{\boldsymbol{\zeta}}

\renewcommand{\phi}{\varphi}
\newcommand{\eps}{\varepsilon}

\renewcommand{\tilde}[1]{\widetilde{#1}}

\renewcommand{\bar}[1]{\overline{#1}}

\newcommand{\id}{\mathrm{id}}

\newcommand{\rank}{\mathrm{rank}}
\newcommand{\im}{\mathrm{im} \;}  
\newcommand{\image}{\mathrm{image}}

\DeclareMathOperator{\Hom}{Hom}
\DeclareMathOperator{\Gr}{Gr}

\DeclareMathOperator{\End}{End}

\DeclareMathOperator{\ann}{\mathfrak{ann}}

\newcommand{\expdim}{\mathrm{expdim}}

\DeclareMathAccent{\wtilde}{\mathord}{largesymbols}{"65}

\newcommand{\GL}{\mathrm{GL}}
\newcommand{\PGL}{\mathrm{PGL}}
\newcommand{\SL}{\mathrm{SL}}
\newcommand{\fraksl}{\mathfrak{sl}}
\newcommand{\frakgl}{\mathfrak{gl}}

\newcommand{\Mat}{\mathrm{Mat}}

\DeclareMathOperator{\Stab}{Stab}

\newcommand{\TNS}{\mathcal{T\!N\!S}}
\newcommand{\calTNS}{\mathcal{T\!N\!S}}

\usepackage[top=3cm, bottom=3cm, left=3cm, right =3cm]{geometry}

\numberwithin{equation}{section}

\newtheorem{theorem}[equation]{Theorem}
\newtheorem{lemma}[theorem]{Lemma}

\newtheorem{corollary}[theorem]{Corollary}

\theoremstyle{definition}
\newtheorem{definition}[theorem]{Definition}

\newtheorem*{question*}{Question}

\newtheorem{remarkx}[theorem]{Remark}
\newenvironment{remark}
  {\pushQED{\qed}\remarkx}
  {\popQED\endremarkx}

\newtheorem{examplex}[theorem]{Example}
\newenvironment{example}
  {\pushQED{\qed}\examplex}
  {\popQED\endexamplex}

\title[Triangular tensor networks]{Triangular tensor networks, \\ 
pencils of matrices and beyond}

\author[A. Bernardi]{Alessandra Bernardi}
\address[A. Bernardi]{Dipartimento di Matematica; Università di Trento;  I-38123 Trento, Italy}
\email{alessandra.bernardi@unitn.it}

\author[F. Gesmundo]{Fulvio Gesmundo}
\address[F. Gesmundo]{Institut de Mathématiques de Toulouse; UMR5219 -- Université de Toulouse; CNRS -- UPS, F-31062 Toulouse Cedex 9, France}
\email{fgesmund@math.univ-toulouse.fr; fulges.math@gmail.com}

\keywords{Tensor networks, graph tensors, pencils of matrices}

\subjclass[2020]{15A69, 81P45, 20G05, 14M12}

\begin{document}

\begin{abstract}
We study tensor network varieties associated with the triangular graph, with a focus on the case where one of the physical dimensions is $2$. This allows us to interpret the tensors as pencils of matrices. We provide a complete characterization of these varieties in terms of the Kronecker invariants of pencils. We determine their dimension, identifying the cases for which the dimension is smaller than the expected parameter count. We provide necessary conditions for membership in these varieties, in terms of the geometry of classical determinantal varieties, coincident root loci and plane cubic curves. We address some extensions to arbitrary graphs. 
\end{abstract}

\maketitle

\section{Introduction}

Tensor network states provide a natural geometric model for describing tensors arising from quantum many-body systems and have become a central object of study in algebraic geometry, representation theory, and quantum information theory, see, e.g., \cite{VerstraeteWolfPerezGarciaCirac2006PEPS,CLSWWs, Orus2014PracticalIntroTN}. From the physical perspective, tensor network states offer efficient ansatz classes for approximating ground states of local Hamiltonians, see \cite{White1992DMRG, Schollwoeck2011DMRG_MPS}, with the graph topology modeling the geometry of the system and a set of weights, called bond dimensions, controlling the expressive power of the class. From the mathematical perspective, the set of tensors representable by a given network naturally yields an algebraic variety whose geometry encodes fundamental structural properties of the model, see \cite{LanQiYe,ChrLucVraWer}.

A general upper bound for the dimension of tensor network varieties was established in \cite{BDG23}, refining earlier results in \cite{PerVerWolCir,VerMurCir,LanQiYe}. The upper bound is known to be sharp for several families of graphs and bond dimension configurations, such as matrix product states with open boundary conditions \cite{HaegMarOsbVer}, certain tree networks \cite{BucBucMic}, and for certain choices of bond dimensions \cite{BDG23}. In physical terms, this means that the network parametrizes a family of states as large as predicted by the na\"ive parameter count. However, there are explicit configurations in which the dimension falls short of the expected value as observed in \cite[Sec. 5]{BDG23}; this phenomenon in geometry takes the name of \emph{defectivity}, see \cite{CC02, Bernardi2018}.

Defective tensor network varieties correspond to networks whose expressive power is strictly smaller than what one would expect from a parameter count. Understanding when this situation occurs is important both from a mathematical and physical point of view \cite{BarthelLuFriesecke2022}. On the mathematical side, defectivity is a subtle property which often hides interesting geometric features; for instance, in the study of secant varieties, defectivity is closely related to classical questions about degeneracy loci and their geometric properties \cite{Terr21}; see also \cite[Ch. 3]{Russo} for an overview. On the physical side, defectivity yields a reduction in the set of states accessible to the ansatz class, which can have direct implications both for the effectiveness of the tensor network ansatz and for the dimensionality of the algorithms to approximate a given state \cite{Schollwoeck2011DMRG_MPS,GersterRizziSilviDalmonteMontangero2017FQH_Hofstadter_TN}.

Among all graphs, the triangular graph is the smallest non-trivial topology where defectivity can appear. The triangle has already played a key role in the analysis of networks in \cite{BDG23}, where the first defective configurations were identified in small dimension. Triangular configurations also appear as fundamental building blocks in higher-dimensional networks, such as PEPS on two-dimensional triangular lattices \cite{Orus2014PracticalIntroTN,ChrLucVraWer}, where local defects may propagate to the global tensor network variety. Finally, the triangular tensor network has connections with the geometry of the orbit-closure of the matrix multiplication tensor, a central object in algebraic complexity theory, see, e.g., \cite{ChrVraZui}.
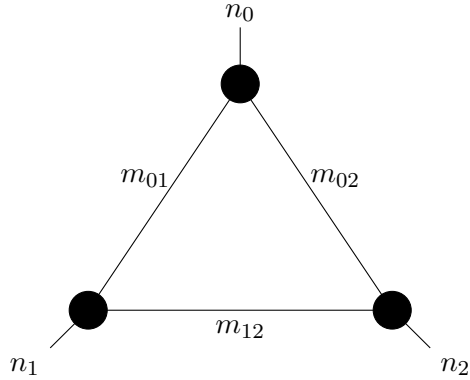
\begin{figure}[!h]
\centering
\begin{tikzpicture}[scale=.5]
\draw (0,0)--(4,6);
\draw (0,0)--(8,0);
\draw (8,0)--(4,6);
\node () at (1.5,3.5) {$m_{01}$};
\node () at (6.5,3.5) {$m_{02}$};
\node () at (4,-0.5) {$m_{12}$};
\draw (0,0)--(-1,-1);
\draw (4,6)--(4,7.5);
\draw (8,0)--(9,-1);
\draw[fill=black] (0,0) circle (.5cm);
\draw[fill=black] (4,6) circle (.5cm);
\draw[fill=black] (8,0) circle (.5cm);
\node[anchor = north east] at (-1,-1){$n_1$};
\node[anchor = north west] at (9,-1){$n_2$};
\node[anchor = south] at (4,7.4){$n_0$};
\node[anchor = north east] at (-1,-1){\phantom{$\bbC^{n_1}$}};
\node[anchor = north west] at (9,-1){\phantom{$\bbC^{n_2}$}};
\node[anchor = south] at (4,7.4){\phantom{$\bbC^{n_0}$}};
\node at (0,0){$1$};
\node at (4,6){$2$};
\node at (8,0){$0$};
\node at (-2,2){\phantom{$T = $}};
\end{tikzpicture}
\caption{Triangular tensor network with bond dimensions $\bfm = (m_{01},m_{12},m_{02})$ and local dimensions $\bfn = (n_0,n_1,n_2)$.} \label{fig: triangle graph}
\end{figure}
In this work, we give a full characterization of the tensor network varieties in the case of the triangle graph, with one physical dimension equal to $2$. This is the case classically known as \emph{pencils of matrices}: a tensor $T \in \bbC^2 \otimes \bbC^{n_1} \otimes \bbC^{n_2}$ can be identified with a matrix of size $n_1 \times n_2$ whose entries are linear polynomials in two variables.

Our work provides the first complete characterization of the elements in an infinite family of tensor network varieties, including the computation of the dimension of such varieties and of normal forms for its elements. Further, we determine families of equations vanishing on the variety, providing necessary membership conditions. Our result is summarized in the following theorem.

\begin{theorem}\label{main theorem}
Let $\bfn = (2,n_1,n_2)$ and $\bfm = (m_{01} , m_{12}, m_{02})$ be an assignment of local and physical dimensions of the triangular network $\Delta$, with $n_i \leq m_{ij}m_{ik}$ with $\{i,j,k\} = \{0,1,2\}$. Write $k_i = m_{ij}m_{ik} - n_i$ for $i = 1,2$ and suppose $k_2 \leq k_1$. If 
\begin{equation}\label{eqn: nontriviality condition} 
    \tag{\raisebox{0.3mm}{\textreferencemark}}
m_{01} = m_{02} \quad \text{ and } \quad  k_2 \leq k_1 < m_{12} 
\end{equation}
set $m=m_{01} = m_{02}$. The following holds:
\begin{enumerate}[leftmargin=*]
\item\label{Part1} \underline{Dimension}. The affine dimension of the tensor network variety $\calTNS^\Delta_{\bfm,\bfn}$ is 
    \[
 \dim \calTNS^{\Delta}_{\bfm,\bfn} = 2 m^2 m_{12}^{2} - m m_{12}^{2} - m m_{12} k_{1} - m m_{12} k_{2} - m k_{1} k_{2} + 2 k_{1} k_{2} + m .
    \]
    The difference between the ``expected dimension'' of $\calTNS^{\Delta}_{\bfm,\bfn}$ and the value above is 
    \[
   \delta^{\Delta}_{\mathbf m,\mathbf n}= (m-2)k_{1}k_{2}+(m-1)(m_{12}^{2}-1)
    \]
    unless $\calTNS^{\Delta}_{\bfm,\bfn}$ is expected to fill the ambient space, in which case it is 
    \[
    \delta^{\Delta}_{\mathbf m,\mathbf n} = m (m_{12}^{2}-m_{12}(k_{1}+k_{2})+k_{1}k_{2}-1).
    \]
\item\label{Part2} \underline{Normal form}. The generic element $T \in \calTNS^{\Delta}_{\bfm,\bfn} \subseteq \bbC^2 \otimes \bbC^{n_1} \otimes \bbC^{n_2}$, regarded as $n_1 \times n_2$ matrix of linear forms in two variables $v_0,v_1$ has the form
\[
T = \left( \begin{array}{cc}
    T_1 & 0 \\
    0 & T_2
\end{array} \right)
\]
where 
\begin{align*}
T_1 = &\left[ 
\begin{array}{c | c | c}
(v_0 + \xi_1 v_1) \cdot \id_{m_{12}-k_{1}} & & \\
\hline
& \ddots & \\
\hline
& & (v_0 + \xi_m v_1) \cdot \id_{m_{12} -k_{1}}
\end{array}\right], \\
T_2 &\text{ generic in size $ [(m-1)k_{1}] \times [(m-1) k_{1} + (k_{1}-k_{2})]$}
\end{align*}
where $\xi_1 \vvirg \xi_m$ are generic complex numbers. 
\item\label{Part3} \underline{Equations}. A system of equations for the variety $\calTNS^{\Delta}_{\bfm,\bfn}$ arises as pullback of the equations of the coincident root locus of multiplicity 
$\lambda(\kappa) = ((m_{12}-\kappa )^{m} , 1^{(m-1)\kappa} )$ for $\kappa = k_1 \vvirg m_{12} - 2$.
\end{enumerate}
If \eqref{eqn: nontriviality condition} does not hold, then $\calTNS^{\Delta}_{\bfm,\bfn} = \bbC^2 \otimes \bbC^{n_1} \otimes \bbC^{n_2}$ and $\dim \calTNS^{\Delta}_{\bfm,\bfn} = 2 n_1 n_2$.
\end{theorem}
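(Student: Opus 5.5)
The plan is to reduce everything to an explicit description of the generic element of $\calTNS^\Delta_{\bfm,\bfn}$ as a pencil, and then read off the dimension and the equations from the Kronecker normal form.

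\textbf{Step 1: the generic element as a pencil.} Write $T$ as the contraction of three vertex tensors $X_0 \in \bbC^2 \otimes \bbC^{m_{01}} \otimes \bbC^{m_{02}}$, $X_1 \in \bbC^{n_1}\otimes \bbC^{m_{01}}\otimes\bbC^{m_{12}}$ and $X_2\in\bbC^{n_2}\otimes\bbC^{m_{12}}\otimes\bbC^{m_{02}}$. View $X_1$ and $X_2$ as linear maps $\bbC^{m_{01}m_{12}}\to\bbC^{n_1}$ and $\bbC^{m_{12}m_{02}}\to\bbC^{n_2}$. For generic $X_1$, $X_2$ these are surjective with kernels of dimension $k_1$ and $k_2$. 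The pencil $T$ is then the image of the pencil $X_0(v)\otimes \id_{m_{12}}$ under $X_1$ on the left and $X_2$ on the right.

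\textbf{Step 2: reduce to a quotient of a block pencil.} The pencil $X_0(v)$ is a generic $m_{01}\times m_{02}$ pencil. When $m_{01}=m_{02}=m$ it is equivalent to $\mathrm{diag}(v_0+\xi_i v_1)$. So $T$ is the pencil $\bigoplus_i (v_0+\xi_iv_1)\id_{m_{12}}$ restricted to a generic codimension-$k_2$ subspace on the source side and projected along a generic $k_1$-dimensional subspace on the target side. Equivalently, after transposing appropriately, we pass to a generic quotient and subspace.

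If $m_{01}\neq m_{02}$, the generic pencil $X_0$ has only minimal indices (Kronecker $L_\epsilon$ blocks). I would show, by exhibiting an explicit generic point and checking the differential of the parametrization, that the image fills the ambient space. The same filling argument applies when $k_1\ge m_{12}$: each eigenvalue block then loses all of its rigidity, and genericity again gives full image.

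\textbf{Step 3: Kronecker invariants of the generic image.} Under \eqref{eqn: nontriviality condition}, compute the invariants of a generic compression of $\bigoplus_{i=1}^m (v_0+\xi_iv_1)\id_{m_{12}}$. The expected outcome is the following.
\begin{itemize}
\item Each eigenvalue $\xi_i$ keeps geometric multiplicity $m_{12}-k_1$; this uses $k_2\le k_1$ and the fact that a generic $k$-dimensional subspace meets each eigenspace minimally.
\item The remaining part is a generic pencil of size $(m-1)k_1\times((m-1)k_1+k_1-k_2)$, consisting only of $L$ blocks plus generic regular part.
\end{itemize}
This yields Part~\ref{Part2}. The step I expect to be the main obstacle is proving that the complementary part $T_2$ is genuinely generic, i.e.\ that no hidden eigenvalue structure survives. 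For this I would first compute the rank of the pencil at $v=(-\xi_i,1)$, which gives the lower bound on the eigenspaces. I would then verify that the complement is generic via a dimension count of the orbit-closure: its $\GL_{n_1}\times\GL_{n_2}\times\GL_2$ orbit of normal forms should match the image dimension, using the Demmel--Edelman codimension formula for Kronecker orbits.

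\textbf{Step 4: dimension and equations.} For Part~\ref{Part1}, $\calTNS^\Delta_{\bfm,\bfn}$ is the closure of the union over $\xi$ of the orbits of the normal forms. Its dimension is $m$ (for the parameters $\xi$) plus the dimension of the orbit of the bundle with the prescribed Segre characteristic, computed by the Demmel--Edelman formula. Subtracting this from $\min\{2n_1n_2,\ \text{parameter count}\}$ gives the two stated expressions for $\delta^\Delta_{\bfm,\bfn}$, which is then routine algebra.

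For Part~\ref{Part3}, take the $\kappa\times\kappa$-type minors, or more precisely the map sending $T$ to the binary form $\det$ of a generic $r\times r$ subpencil obtained by compressing with generic matrices. On the normal form this binary form has roots $\xi_i$ of multiplicity at least $m_{12}-\kappa$, together with $(m-1)\kappa$ further roots, for $\kappa=k_1,\dots,m_{12}-2$. Hence it lies in the coincident root locus of type $\lambda(\kappa)$, and pulling back the equations of that locus gives equations vanishing on the variety.
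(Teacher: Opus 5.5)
Your reduction of $T$ to a compression of $P=\bigboxplus_i (v_0+\xi_iv_1)\bfI_{m_{12}}$ is correct, and so is your rank computation at $v=(-\xi_i,1)$. Together they give one inclusion. A generic element of $\calTNS^\Delta_{\bfm,\bfn}$ has generic rank $n_1$, hence no right singular blocks, and exactly $m_{12}-k_1$ Jordan blocks at each $\xi_i$. So it lies in the closure of the family $B=\bigboxplus_i J_1(v_0+\xi_iv_1)\boxtimes\bfI_{m_{12}-k_1}\boxplus T_2$. This rank bound alone already gives Part~3: the determinant of any $n'\times n'$ compression, with $n'=mm_{12}-\kappa$, vanishes to order at least $m_{12}-\kappa$ at each $\xi_i$.

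The genuine gap is the reverse inclusion. You need that a generic compression of $P$ actually produces $B$ with $T_2$ generic, and not some proper subvariety of that family, for instance with larger Jordan blocks at the $\xi_i$ or forced structure in the complement. Your remedy is to match the Demmel--Edelman dimension of the normal-form family with ``the image dimension'', and this is circular. The only computation of $\dim\calTNS^\Delta_{\bfm,\bfn}$ in your plan is Step~4, which presupposes the normal form. The parameter count cannot stand in for it either, since the variety is defective. A uniform lower bound from the rank of the differential of $\Phi$ at explicit points, for all $(m,m_{12},k_1,k_2)$, is not supplied, and it is essentially the whole difficulty.

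The same gap sits under your treatment of $k_1\geq m_{12}$. ``Each eigenvalue block loses its rigidity'' is not an argument. It is exactly the case $k_1=m_{12}$ of the missing statement, namely that a generic compression of $P$ to size $(m-1)m_{12}\times(mm_{12}-k_2)$ is a generic pencil; larger $k_1$ then follows by projecting further.

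The paper supplies the missing inclusion with Pokrzywa's criterion for degenerations of pencils \cite[Thm.~3]{Pok86}. Regard $B$ as a pencil in $V_0\otimes W_1\otimes W_2$, which adds $k_1$ blocks $R_0$ and $k_2$ blocks $L_0$. Since $A=\bigboxplus_i J_1(v_0+\xi_iv_1)\boxtimes\bfI_{m_{12}}$ has no singular blocks, the only nontrivial condition is $m_{12}\leq jk_1+(m_{12}-k_1)$ for $j\geq 1$, which holds. So $B$ lies in the orbit closure of $A$, and projecting $W_i\to V_i$ puts $B$ in $\calTNS^\Delta_{\bfm,\bfn}$.

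Once both inclusions are in place, your Demmel--Edelman count is a fine substitute for the paper's computation via \autoref{lemma: ann for block sum pencils} and the maps $\phi_1,\phi_2$. When $k_1=k_2$, remember that $T_2$ carries $(m-1)k_1$ further eigenvalues among the continuous parameters.

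For $m_{01}\neq m_{02}$ you need no differential computation. Reduce to the critical case and take $X_0=L_p^{\boxplus\alpha}\boxplus L_{p+1}^{\boxplus\beta}$. Then $X_0\boxtimes\bfI_{m_{12}}=L_p^{\boxplus m_{12}\alpha}\boxplus L_{p+1}^{\boxplus m_{12}\beta}$ already has the generic Kronecker type of the critical format, so its orbit is dense.
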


The characterization in  \autoref{main theorem}, \autoref{Part2}, shows that a generic tensor in $\TNS^\Delta_{\bfm,\bfn} \subseteq \mathbb{C}^2\otimes\mathbb{C}^{n_1}\otimes\mathbb{C}^{n_2}$, viewed as an $n_1\times n_2$ pencil, has a specific block decomposition: one block is a direct sum of several copies of the same full-rank pencil, while the complementary block is a generic pencil of smaller size. The dimension formula in \autoref{main theorem}, \autoref{Part1} is a direct consequence of this normal form. The equations in \autoref{main theorem}, \autoref{Part3} are obtained by imposing that the repeated block has coincident generalized eigenvalues, according to the structure of \autoref{Part2}. 

In the special case where $n_2 = n_1+1$, the result on the normal form from \autoref{main theorem}, \autoref{Part2} guarantees that the tensor network variety $\calTNS^\Delta_{\bfm,\bfn}$ is contained in the nullcone for the action of $\SL_2 \times \SL_{n_1} \times \SL_{n_2}$ on $\bbC^2 \otimes \bbC^{n_1} \otimes \bbC^{n_2}$. Using this fact, we can partially recover some of the results of \cite{vdBCLNZ25} about non-membership of certain points in the moment polytope of the matrix multiplication tensor. We discuss this in \autoref{sec:4}.

We emphasize that the subcriticality assumption $n_i\le m_{ij}m_{ik}$ is not restrictive, see \autoref{subsec: graph tensors}. By contrast, the hypothesis \eqref{eqn: nontriviality condition} is essential: if it fails, then the tensor network variety fills the ambient space. When $m_{01}\neq m_{02}$ this is proved in \autoref{thm: different bond dimensions} whereas when $k_1\ge m_{12}$ it follows from \autoref{thm: triangular subcritical}; in particular, for $k_1=m_{12}$ the normal form degenerates to a generic tensor in $\mathbb{C}^2\otimes\mathbb{C}^{n_1}\otimes\mathbb{C}^{n_2}$.

We list some additional contributions of the paper.
\begin{itemize}
  \item We provide a set of equations for $\calTNS^\Delta_{\bfm,\bfn}$ with one physical dimension equal to $2$ as pullback of equations of coincident root loci. See \autoref{thm: equations from coincident root loci}.
  \item We provide a geometric interpretation of the results of \autoref{main theorem} in terms of intersection theoretic properties of the pencil in the special case $\bfm = (2,m,2)$. This provides a complete system of set-theoretic equations for $\calTNS^\Delta_{\bfm,\bfn}$ and recovers the results of \cite[Theorem 5.2]{BDG23}. See \autoref{thm: triangle and Z variety}.
  \item We provide a geometric characterization of $\calTNS^\Delta_{(2,2,2),(3,3,3)}$. This yields a complete system of set-theoretic equations for the tensor network variety. See \autoref{cor: ruppert for TNS} and \autoref{thm: triangle 222 333}. 
   \item We provide a partial generalization of \autoref{main theorem} to arbitrary networks, with special configuration of physical dimensions. See \autoref{augmentation:theorem} and \autoref{cor:defect_growth_many_pins}.
\end{itemize}

\paragraph{\textbf{Outline.}}
In \autoref{section:preliminaries} we introduce some basics on tensor networks, we define the expected dimension and the defect, record the Kronecker classification of matrix pencils, as well as some results on symmetry Lie algebra of tensors under restricted actions. The proof of \autoref{main theorem}, \autoref{Part1}, and \autoref{main theorem}, \autoref{Part2} are contained in \autoref{section: pencils}, which establishes the dimension and normal form results for $\TNS^{\Delta}_{\mathbf m,\mathbf n}$. In \autoref{sec:4}, we describe a system of equations for $\TNS^{\Delta}_{\mathbf m,\mathbf n}$ via coincident root loci, proving \autoref{main theorem}, \autoref{Part3}. \autoref{sec:5} places the result of \autoref{main theorem} into a more general geometric construction; we use it to characterize the tensor network variety $\calTNS^\Delta_{(2,2,2),(3,3,3)}$ as well. \autoref{sec:6} studies extensions to arbitrary graphs via augmentation with vertices of local dimension $2$. 

\section{Preliminaries}\label{section:preliminaries}

Throughout the paper, all vector spaces are complex vector spaces of finite dimension; for a space $V$, let $V^*$ denote its dual space. A \emph{variety} is an affine or projective complex algebraic variety, defined as the vanishing set of a system of polynomial equations. 

The tensor space $V_1 \ootimes V_d$ of tensors of order $d$ is the space of multilinear maps $V_1^* \ttimes V_d^* \to \bbC$. The group $\GL(V_1) \ttimes \GL(V_d)$ naturally acts on $V_1 \ootimes V_d$. This action extends to an action of $\End(V_1) \ttimes \End(V_d)$. Given two tensors $T,S \in V_1 \ootimes V_d$, we say that $S$ is a \emph{restriction} of $T$ if
\[
S \in (\End(V_1) \ttimes \End(V_d)) \cdot T
\]
namely if there exist $X_k \in \End(V_k)$ for $k = 1 \vvirg d$ such that $S = (X_1 \ootimes X_d)(T)$. We say that $S$ is a \emph{degeneration} of $T$ if $S$ is limit of restrictions of $T$.

For every subset $I \subseteq [d]$, a tensor defines a \emph{flattening map}, which is the linear map 
\[
\bigotimes_{i \in I} V_i^* \to \bigotimes_{j \notin I} V_j
\]
defined by tensor contraction. A tensor is \emph{concise on the $i$-th factor} if the flattening map $T_i: V_i^* \to \bigotimes_{j \neq i} V_j$ is injective; we say that it is \emph{concise} if it is concise on every factor. The flattening $T_1$ parameterizes a linear subspace of $V_2 \ootimes V_d$ of dimension at most $\dim V_1$. The tensor $T$ is uniquely determined, up to the action of $\GL(V_1)$, by such a subspace.

We briefly recall the basic notation and definitions for tensor network states, following the conventions of \cite{BDG23}.
\subsection{Graph tensors and tensor network states}\label{subsec: graph tensors}

Let $\Gamma$ be a finite connected graph with $d$ vertices $\mathbf{v}(\Gamma) = \{ 1 \vvirg d\}$ and set of edges $\mathbf{e}(\Gamma)$. Let $\mathbf{n} = (n_i : i \in \mathbf{v}(\Gamma))$ and $\mathbf{m} = (m_e : e \in \mathbf{e}(\Gamma))$ be collections of integer weights on the vertices and the edges of $\Gamma$, called, respectively, \emph{physical dimensions} and \emph{bond dimensions}. Associate to every vertex $i \in \bfv(\Gamma)$ a \emph{physical space} $V_i \simeq \mathbb{C}^{n_i}$. Assign an arbitrary orientation to the edges of $\Gamma$, and for every edge $e = (i_1,i_2)$ consider a pair of \emph{bond spaces} $E_e, E_e^*$, dual to each other with $\dim E_e = m_e$. Let $W_i = \bigotimes_{e \ni i} E'_e$ where $E'_e = E_e$ if $i$ is the source of $e$ and $E'_e = E_e^*$ if $i$ is the target of $e$.

The \emph{graph tensor} of $(\Gamma, \bfm)$ is the tensor 
\[
T(\Gamma,\bfm) = \bigotimes_{e \in \bfe(\Gamma)} \id_{E_e}
\]
where $\id_{E_e} \in E_e \otimes E_e^*$ is the identity map from $E_e$ to itself. After regrouping the factors as prescribed by the structure of $\Gamma$, the graph tensor
\[
T(\Gamma,\bfm) \in \bigotimes_{e \in \bfe(\Gamma)} (E_e^* \otimes E_e) \simeq \bigotimes_{i \in \bfv(\Gamma)} \Bigl[ \bigotimes_{e \ni i} E'_e \Bigr] = \bigotimes _{i \in \bfv(\Gamma)} W_i
\]
can be regarded as a tensor of order $d$. It is easy to see that $T(\Gamma,\bfm)$ is concise in $W_1 \ootimes W_d$ and different choices of orientation yield the same graph tensor up to the action of $\GL(W_1) \ttimes \GL(W_d)$.

The tensor network variety $\calTNS^\Gamma_{\bfm,\bfn}$ is the set of all degenerations of $T(\Gamma,\bfm)$ to $V_1 \ootimes V_d$. More precisely, let $\Hom(W_i, V_i)$ be the space of linear maps from $W_i$ to $V_i$. Define 
\begin{equation}\label{eqn: map Phi}
\begin{aligned}
\Phi: \prod_{i \in \bfv(\Gamma)} \Hom(W_i, V_i) &\to V_1 \ootimes V_d \\
(X_1 \vvirg X_d) &\mapsto (X_1 \ootimes X_d)(T(\Gamma,\bfm)).
\end{aligned}
\end{equation}
The closure, equivalently taken in the Zariski or the Euclidean topology, of the image of $\Phi$, that is 
\[
\calTNS^\Gamma_{\bfm,\bfn} = \bar{ \image(\Phi) } \subseteq V_1 \ootimes V_d,
\]
is the tensor network variety on $\Gamma$ with bond dimension $\bfm$ and physical dimensions $\bfn$. By construction, $\calTNS^\Gamma_{\bfm,\bfn}$ is closed under the action of $\GL(V_1) \ttimes \GL(V_d)$.

Equivalently, one may consider \emph{local} tensors $X_i \in \Hom(W_i,V_i) \simeq \left(\bigotimes_{e\ni i}{E'}_e^* \right) \otimes V_i$ and regard the map $\Phi$ simply as the tensor contraction of $\bigotimes_{i \in \bfv(\Gamma)} X_i$ between each bond space $E_e$ with its dual space $E_e^*$. In this way $\calTNS^\Gamma_{\bfm,\bfn}$ is realized, up to closure, as the set of tensors arising via tensor contraction of local tensors. This is the point of view that is often taken in the physics literature. However, explicitly realizing the tensor network variety as the image of the map $\Phi$ via evaluation at the graph tensor makes it easier to state some of the results in the following sections.

Following \cite{LanQiYe}, we recall the following definition.
\begin{definition}\label{def:criticality}
Let $ (\Gamma, \mathbf{m}, \mathbf{n}) $ be a tensor network. A vertex $i \in \bfv(\Gamma) $ is called:
\begin{itemize}
    \item \emph{subcritical} if $ \prod_{e \ni i} m_e \geq n_i $; it is \emph{strictly subcritical} if the inequality is strict;
    \item \emph{supercritical} if $ \prod_{e \ni i} m_e \leq n_i $; it is \emph{strictly supercritical} if the inequality is strict;
    \item \emph{critical} if $ i $ is both subcritical and supercritical.
\end{itemize}
The tensor network $(\Gamma, \mathbf{m}, \mathbf{n}) $ is called (strictly) subcritical (resp. supercritical) if all its vertices are (strictly) subcritical (resp. supercritical).
\end{definition}
The condition $n_i \leq m_{ij}m_{ik}$ in the statements of \autoref{main theorem} guarantees that the network $\Delta$ with bond dimensions $\bfm$ and local dimensions $\bfn$ is subcritical. This is not restrictive. It was shown in \cite{LanQiYe} and more formally in \cite{BDG23} that tensor network varieties with supercritical vertices can be expressed as birational images of fiber bundles over suitable Grassmannians whose fibers are corresponding subcritical tensor network varieties. In particular, if $T$ is a tensor in a tensor network variety having supercritical vertices, then $T$ is not concise; in the subspace where it is concise, it can be expressed as an element of a corresponding subcritical tensor network variety. We refer to \cite[Section 4.5]{BDG23} for the details.

\subsection{Expected dimension}
A general upper bound for the dimension of $\mathcal{TNS}^\Gamma_{\mathbf{m},\mathbf{n}}$ was given in \cite[Cor.~1.4]{BDG23}. In most cases, and in particular in the cases studied in this paper, such bound is
\begin{equation} \label{eq:expdim}
\expdim \calTNS^\Gamma_{\mathbf{m},\mathbf{n}} := \ \min \left\{ \sum_{i \in \mathbf{v}(\Gamma)} \left(n_i \cdot  \prod_{e \ni i } m_e -1\right)  -  \sum_{e \in \mathbf{e}(\Gamma)} (m_e^2-1)  +  1, \prod_{i  \in \mathbf{v}(\Gamma)} n_i \right\},
\end{equation}
and we call it the \emph{expected dimension} of the tensor network variety. If $\dim \calTNS^\Gamma_{\bfm,\bfn} < \expdim \calTNS^\Gamma_{\bfm,\bfn}$, we say that the tensor network variety is \emph{defective}. The expected dimension arises as a count of the number of free parameters in the definition of $\calTNS^\Gamma_{\mathbf{m},\mathbf{n}}$: if $\calTNS^\Gamma_{\mathbf{m},\mathbf{n}}$ does not fill the ambient space, then the expected dimension is the difference between the dimension of the domain of the map $\Phi$ from \eqref{eqn: map Phi} and the dimension of the gauge group, a subgroup of $\prod_{i \in \bfv(\Gamma)} \GL(W_i)$ which is expected to control the dimension of the fibers of $\Phi$. The gauge subgroup is realized as the image of the product $\prod_{e \in \bfe(\Gamma)} \GL(E_e)$, into $\GL(W_1) \ttimes \GL(W_d)$, via the natural action of each $\GL(E_e)$ on $E_e$ and $E_e^*$, appearing in two of the factors $W_1 \vvirg W_d$. The action of $\GL(W_1) \ttimes \GL(W_d)$ on $W_1 \ootimes W_d$ then realizes the gauge subgroup as the product $\prod_{e \in \bfe(\Gamma)}\PGL(E_e)$. It was shown in \cite[Cor. 3.7]{BDG23} that it coincides with the identity component of the stabilizer of the graph tensors $T(\Gamma,\bfm)$. In particular, it controls the dimension of the tensor network variety in the supercritical range and it plays a key role in the analysis of the dimension in the subcritical range.

The difference 
\[
\delta^\Gamma_{\bfm,\bfn} := \expdim \calTNS^\Gamma_{\bfm,\bfn} - \dim \calTNS^\Gamma_{\bfm,\bfn}
\]
is called the \emph{defect} of the tensor network variety $\calTNS^\Gamma_{\bfm,\bfn}$. In \cite[Sec. 5]{BDG23}, several examples of defective configurations were provided, in the case where $\Gamma$ is the cycle graph of length $3$ and $4$. From this point of view, \autoref{main theorem} generalizes these results, characterizing dimension and defects of tensor network varieties in the case of triangle graph with one physical dimension equal to $2$.

In order to separate the intrinsic parameter count from the possible saturation of the ambient space, it is convenient to consider the following notion of \emph{fiber defect}, which is analogous to the one defined in \cite{CC02} in the setting of secant varieties.
\begin{definition}
The \emph{fiber defect} is the difference between the actual dimension of the tensor network variety and the parameter count, that is
\[
\mathrm{fiberdefect}^\Gamma_{\mathbf{m},\mathbf{n}}
:= \ \left(
\sum_{i \in \mathbf{v}(\Gamma)} \left(n_i \cdot  \prod_{e \ni i } m_e -1\right) \ - \ \sum_{e \in \mathbf{e}(\Gamma)} (m_e^2-1) \ + \ 1\right)
- \dim \calTNS^\Gamma_{\mathbf{m},\mathbf{n}} .
\]
\end{definition}
The fiber defect controls the difference in dimension between the domain of the map $\Phi$ modulo the action of the gauge subgroup, and the actual dimension of the tensor network variety. It coincides with the defect when the variety is not expected to fill the ambient space, and it allows one to keep track of the ``unexpected'' contribution of the dimension of the fiber of the map $\Phi$.

\subsection{Pencils of matrices}\label{sec: pencils}

A pencil of matrices of size $n_1 \times n_2$ is a $2$-dimensional linear subspace of $\Mat_{n_1 \times n_2}$. By extension, it is also a tensor in $\bbC^2 \otimes \bbC^{n_1} \otimes \bbC^{n_2}$ where $\Mat_{n_1 \times n_2}$ is identified with $\bbC^{n_1} \otimes \bbC^{n_2}$ and the linear space coincides with the image of the flattening map $T: (\bbC^2)^* \to \Mat_{n_1 \times n_2}$. This correspondence is one-to-one up to the action of $\GL_2$ on the factor $\bbC^2$.

It is a classical fact due to Weierstrass and Kronecker \cite{Weis67,Kron90} that pencils of matrices have normal forms: in other words, up to the action of $\GL_2 \times \GL_{n_1} \times \GL_{n_2}$, a pencil of matrices is uniquely determined by a finite set of discrete and continuous invariants, today known as Kronecker invariants. We refer to \cite[Ch. 13]{Gant59} for the theory and we record here the result. Let $a_0,a_1$ be a basis of $\bbC^2$, let $b_1 \vvirg b_{n_1}$ and $c_1 \vvirg c_{n_2}$ be bases of $\bbC^{n_1}$ and $\bbC^{n_2}$, respectively.

There are three types of \emph{indecomposable} pencils of matrices:
\begin{itemize}
\item Left singular pencils:
 \begin{align*}
L_{p} = &a_0 \otimes \textstyle \sum_1^p b_i \otimes c_i +a_1 \otimes \textstyle \sum_1^p b_i \otimes c_{i+1} = \\ 
&\left[\begin{array}{ccccc}
        a_0 & a_1 \\ 
         & \ddots & \ddots \\
         & & a_0 & a_1
        \end{array} \right]
        \in \bbC^2 \otimes \Mat_{p \times (p+1)} 
\end{align*}
\item Right singular pencils: 
\begin{align*} 
R_{p} = &a_0 \otimes \textstyle \sum_1^p b_i \otimes c_i +a_1 \otimes \textstyle \sum_1^p b_{i+1} \otimes c_{i} = \\
&\left[{\begin{array}{cccc}
        a_0 & \\
        a_1 & \ddots  & \\
        & \ddots & a_0 \\
        & &  a_1       \end{array}}\right]
        \in \bbC^2 \otimes \Mat_{(p+1) \times p} 
\end{align*}
\item Jordan pencils with generalized eigenvalue $z = \zeta_0 a_0 + \zeta_1 a_1$: 
\begin{align*} 
    J_p(z) = &z \otimes \textstyle \sum_1^p b_i \otimes c_i +a_1 \otimes \textstyle \sum_1^{p-1} b_i \otimes c_{i+1} = \\ 
    &\left[{\begin{array}{cccc}
        z & a_1 \\
        & \ddots & \ddots \\
        & & \ddots & a_1  \\
        & &  & z 
        \end{array}}\right]
        \in \bbC^2 \otimes \Mat_{p \times p} 
\end{align*}
\end{itemize}

The \emph{block sum} of two pencils $T_1 \in \bbC^2 \otimes \bbC^{n_1} \otimes \bbC^{n_2}$ and $T_2 \in \bbC^2 \otimes \bbC^{n_1'} \otimes \bbC^{n_2'}$ is obtained by projecting the direct sum $T_1 \oplus T_2$ on the diagonal $\bbC^2 \subseteq \bbC^2 \oplus \bbC^2$; in terms of matrices of linear forms, the block sum is 
\[
T_1 \boxplus T_2 = \left( \begin{array}{cc}
T_1 & 0 \\
0 & T_2
\end{array} \right) \in \bbC^2 \otimes (\bbC^{n_1} \oplus \bbC^{n_1'})  \otimes (\bbC^{n_2} \oplus \bbC^{n_2'}).
\]
For a pencil $T$ and a matrix $M$, we also use the notation $T \boxtimes M$ to indicate the Kronecker product of the tensor $T$ and the matrix $M$ regarded as a tensor $e_0 \otimes M$. In particular, if $M$ is an identity matrix of size $q$, $T \boxtimes M$ is the block sum of $q$ copies of $T$.

The structural result of \cite{Weis67,Kron90} is that every pencil of matrices $T \in V_0 \otimes V_1 \otimes V_2$ with $\dim V_0 = 2$ is, up to the action of $\GL(V_1) \times \GL(V_2)$, a block sum of indecomposable pencils; the sizes of the blocks are uniquely determined. Moreover, the generalized eigenvalues of the Jordan blocks are uniquely determined up to the action of $\GL(V_0)$.  

We record the Kronecker structure of generic elements in $\bbC^2 \otimes \bbC^{n_1} \otimes \bbC^{n_2}$; see, e.g., \cite[Lemma A1]{GLS24}. If $n_2 > 2n_1$ or $n_1 > 2n_2$, then $\bbC^2 \otimes \bbC^{n_1} \otimes \bbC^{n_2}$ does not contain concise tensors. Therefore, assume $n_1 \leq 2n_2$ and $n_2 \leq 2n_1$. 

\begin{itemize}
\item If $n_1 < n_2 \leq 2n_1$, there exist unique $p \geq 0$, and $\alpha,\beta \geq 0$ with $\beta > 0$ such that  
\[
n_1 = p \alpha + (p+1) \beta,  \qquad n_2 = (p+1) \alpha + (p+2) \beta 
 \]
 and a generic pencil $T$ of size $n_1 \times n_2$ has the form 
 \[
T = L_p^{\boxplus \alpha} \boxplus L_{p+1}^{\boxplus \beta}.
 \]
 \item If $n_2 < n_1 \leq 2n_2$, there exist unique $p \geq 0$, and $\alpha,\beta \geq 0$ with $\beta > 0$ such that  
\[
n_2 = p \alpha + (p+1) \beta,  \qquad n_1 = (p+1) \alpha + (p+2) \beta 
 \]
 and a generic pencil $T$ of size $n_1 \times n_2$ has the form 
 \[
T = R_p^{\boxplus \alpha} \boxplus R_{p+1}^{\boxplus \beta}.
 \]
 \item If $n_1 = n_2$, there exist $z_1 \vvirg z_{n_1}$ distinct linear forms such that a generic pencil $T$ of size $n_1 \times n_2$ has the form
 \[
T = \bigboxplus_{i=1}^{n_1} J_1(z_i).
 \]
\end{itemize}
In \cite[Theorem 3]{Pok86}, degeneration between pencils of matrices is characterized in terms of the sizes and the multiplicities of the indecomposable blocks. We will use some specific instances of this characterization in the following sections.

\subsection{Orbits, isotropy groups and Lie algebras}

Let $G$ be a complex algebraic group acting on a vector space $V$. In the cases of interest for us, we will have, mainly, $G = \GL(V_1) \ttimes \GL(V_d)$ acting on $V = V_1 \ootimes V_d$. Let $v \in V$ be a vector. The orbit of $v$ is the set $G \cdot v = \{ g \cdot v : g \in G\}$; the orbit-closure of $v$ is $\bar{G \cdot v}$, where the closure can be taken equivalently in the Zariski or the Euclidean topology. 

The isotropy group of $v$ is the stabilizer of $v$ under the action of $G$, that is 
\[
\Stab_G (v) = \{ g \in G : g \cdot v = v \}.
\]
The isotropy group is an algebraic subgroup of $G$. It is union of finitely many (irreducible) connected components. The identity component $\Stab_G^\circ(v)$ is the irreducible component containing the identity element of $G$, and it is a normal subgroup of $\Stab_G(v)$ \cite[Ch. 1, Sec. 1.2]{Borel}. There is a natural orbit map 
\begin{align*}
\gamma: G &\to V \\
g & \mapsto g \cdot v
\end{align*}
whose image is $G \cdot v$. In particular, the dimension of the orbit-closure $\bar{G \cdot v}$ equals the rank of the differential of $\gamma$ at any point; taking the differential at the identity element, we have 
\begin{align*}
\mathrm{d} \gamma: \frakg &\to V \\
X & \mapsto X. v
\end{align*}
where $\frakg$ is the Lie algebra of $G$ and we identify $V$ with the tangent space $T_vV$. The kernel of the differential is 
\[
\ker  \mathrm{d} \gamma = \ann_\frakg(v) = \{ X \in \frakg : X.v = 0 \},
\]
and it coincides with the Lie algebra of $\Stab_G(v)$.
In particular,
\[
\dim(G\cdot v)=\dim\mathfrak g-\dim\ann_{\mathfrak g}(v).
\]
In summary, the dimension of an orbit-closure is entirely controlled by the dimension of the annihilator $\ann_\frakg(v)$. 

We record some results for the action of $\GL(V_0) \times \GL(V_1) \times \GL(V_2)$ on $V_0 \otimes V_1 \otimes V_2$ which will be useful in the following. The action has a two dimensional kernel subgroup 
\[
(\bbC^\times)^2 = \{ (\lambda_0 \id_{V_0} , \lambda_1 \id_{V_1}, \lambda_2 \id_{V_2}) : \lambda_0 \lambda_1\lambda_2 = 1 \} \subseteq \GL(V_0) \times \GL(V_1) \times \GL(V_2);
\]
its Lie algebra is 
\[
\bbC^2 = \{ (z_0 \id_{V_0} , z_1 \id_{V_1}, z_2 \id_{V_2}) : z_0 + z_1 + z_2  = 0 \} \subseteq \frakgl(V_0) \times \frakgl(V_1) \times \frakgl(V_2).
\]
In particular, for every $T$, we have $\dim \Stab_G(T) = \dim \ann_\frakg(T) \geq 2$.

The following is an extension of \cite[Theorem 4.1]{CGLVW}, about annihilator of tensors with a direct sum structure.
\begin{lemma}\label{lemma: direct sum ann}
 Let $T_1 \in W \otimes V_1^{(1)} \otimes V_2^{(1)} \otimes V_3^{(1)}$ and $T_2 \in W \otimes V_1^{(2)} \otimes V_2^{(2)} \otimes V_3^{(2)}$ be two tensors and let $\frakg$ be a Lie algebra acting on $W$. Let $V_i = V_i^{(1)} \oplus V_i^{(2)}$ for $i=1,2,3$ and consider the tensor
 \[
T = T_1 + T_2 \in W \otimes V_1 \otimes V_2 \otimes V_3.
 \]
 Write $\tilde{\frakg} = \frakg \times \frakgl(V_1) \times \frakgl(V_2) \times \frakgl(V_3)$ and $\tilde{\frakg}_1,\tilde{\frakg}_2$ for the similar algebras with $\frakgl(V_i^{(1)}),\frakgl(V_i^{(2)})$. If $T_s$ is concise on the factors $V_i^{(s)}$, then  
\[
\ann_{\tilde{\frakg}} (T) \subseteq \ann_{\tilde{\frakg}_1} (T_1) + \ann_{\tilde{\frakg}_2} (T_2),
\]
where $\frakg_1 + \frakg_2$ is regarded as a subalgebra of $\frakg$.
\end{lemma}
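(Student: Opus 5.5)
Write an element $X \in \ann_{\tilde{\frakg}}(T)$ as $X = (A, X_1, X_2, X_3)$ with $A \in \frakg$ and $X_i \in \frakgl(V_i)$. Decompose each $X_i$ in blocks according to $V_i = V_i^{(1)} \oplus V_i^{(2)}$:
\[
X_i = \begin{pmatrix} X_i^{11} & X_i^{12} \\ X_i^{21} & X_i^{22} \end{pmatrix},
\]
where $X_i^{st}$ maps $V_i^{(t)}$ to $V_i^{(s)}$. The plan is to prove that the off-diagonal blocks vanish, and that the block-diagonal parts, each paired with the same $A$, annihilate $T_1$ and $T_2$ respectively. This gives the inclusion, with $A$ seen in both summands; I read the statement as asserting exactly this.

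The key steps are as follows. By the Leibniz rule,
\[
X.T = X.T_1 + X.T_2 = A.T_1 + \textstyle\sum_i X_i.T_1 + A.T_2 + \sum_i X_i.T_2 .
\]
Expand $W \otimes V_1 \otimes V_2 \otimes V_3$ as a direct sum of $W \otimes V_1^{(s_1)} \otimes V_2^{(s_2)} \otimes V_3^{(s_3)}$ over $(s_1,s_2,s_3) \in \{1,2\}^3$, and project $X.T = 0$ onto each summand. The terms $A.T_s$ and $X_i^{ss}.T_s$ lie in the $(s,s,s)$ component. The term $X_i^{ts}.T_s$ with $t \neq s$ lies in the component where the $i$-th index equals $t$ and the other two equal $s$. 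These mixed components, such as $(2,1,1)$, receive contributions only from $X_1^{21}.T_1$. The only other source would be $T_2$, but from $T_2$ one can reach only components with at least two indices equal to $2$ by changing a single factor, so it does not contribute. Hence $X_i^{ts}.T_s = 0$ for every $i$ and every $t \neq s$.

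Now use conciseness. The tensor $X_1^{21}.T_1$ is the image of $T_1$ under $X_1^{21} \otimes \id$, and its flattening to $V_1^{(2)}$ is $X_1^{21} \circ (T_1)_{1}$, where $(T_1)_1 \colon (W \otimes V_2^{(1)} \otimes V_3^{(1)})^* \to V_1^{(1)}$. Conciseness of $T_1$ on $V_1^{(1)}$ means this map is surjective, equivalently that the flattening of the dual is injective. Therefore $X_1^{21} = 0$, and the same argument applies to every off-diagonal block. Projecting onto the $(1,1,1)$ and $(2,2,2)$ components then gives $(A, X_1^{11}, X_2^{11}, X_3^{11}) \in \ann_{\tilde{\frakg}_1}(T_1)$ and likewise for $T_2$. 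Since the $X_i$ are block diagonal, $X$ is the sum of these two elements inside $\tilde{\frakg}$.

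The main obstacle is bookkeeping rather than substance. First, one must check that no cross-contamination of components occurs; this relies on having at least three blocked factors, whereas with only two factors the mixed components could collide. Second, the statement must be interpreted with $A$ shared between the two summands: in $\ann_{\tilde{\frakg}_1}(T_1) + \ann_{\tilde{\frakg}_2}(T_2)$ the $\frakg$-part is common, so the identification of $\tilde{\frakg}_1 + \tilde{\frakg}_2$ inside $\tilde{\frakg}$ has to be made with $A$ appearing once. Both points require care but no new idea.
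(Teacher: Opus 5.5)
Your proof is correct and follows essentially the same argument as the paper's. The paper first separates the two contributions by noting that $X.T_s$ lies in the components with at least two indices equal to $s$, then invokes conciseness; your eight-component bookkeeping is the same argument, with the vanishing of the off-diagonal blocks made explicit where the paper leaves it implicit. Your reading of the sum, with the $\frakg$-component shared, is also what the paper's proof establishes, since it shows that the restriction of the same element $(Y,X_1,X_2,X_3)$ annihilates each $T_s$.
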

\begin{proof}
The proof is similar to the one of \cite[Theorem 4.1]{CGLVW}. Let $(Y,X_1,X_2,X_3) \in \ann_{\tilde{\frakg}} (T)$. Then 
\[
0 = (Y,X_1,X_2,X_3).T = (Y,X_1,X_2,X_3).T_1 + (Y,X_1,X_2,X_3).T_2.
\]
Observe that the two tensors $(Y,X_1,X_2,X_3).T_1 , (Y,X_1,X_2,X_3).T_2$ belong to linear subspaces of $W \otimes V_1 \otimes V_2 \otimes V_3$ with trivial intersection. Indeed, since $T_s \in W \otimes V_1^{(s)} \otimes V_2^{(s)} \otimes V_3^{(s)}$, we have
\[
(Y,X_1,X_2,X_3).T_s \in W \otimes \bigoplus_{\substack{(s_1',s_2',s_3') : \\
s_i' = s \text{ for at least two }i}} V_1^{(s_1')} \otimes V_2^{(s_2')} \otimes V_3^{(s_3')}.
\]
The two spaces for $s = 1,2$ intersect trivially. This guarantees $0 = (Y,X_1,X_2,X_3).T_s$ for $s = 1,2$.

The conciseness assumption guarantees that if $(Y,X_1,X_2,X_3).T_s = 0$ then 
\[
(Y,X_1,X_2,X_3)|_{W \otimes V_1^{(s)} \otimes V_2^{(s)} \otimes V_3^{(s)}} \in \Hom ( W \otimes V_1^{(s)} \otimes V_2^{(s)} \otimes V_3^{(s)}, W \otimes V_1 \otimes V_2 \otimes V_3)
\]
is an element of $\ann_{\tilde{\frakg}_s}(T_s)$. This concludes the proof.
\end{proof}
The case of \cite[Theorem 4.1]{CGLVW} corresponds to the case where $W$ is a trivial one-dimensional tensor factor, so that $T_1,T_2$ are tensors of order three; \cite[Theorem 4.1]{CGLVW} shows that in this case the inclusion holds with equality. It is immediate that the statement holds with equality whenever the action of $\frakg$ is trivial. We will apply \autoref{lemma: direct sum ann} in the case where $W$ is itself a tensor space and $\frakg$ is a product of Lie algebras of general linear groups acting on its factors.

It is important in \autoref{lemma: direct sum ann} that $T$ has a direct sum structure on (at least) three tensor factors. If the direct sum is only on two tensor factors, then there are \emph{off-diagonal} terms that may contribute to the annihilator in a non-trivial way. Block sums of matrix pencils fall into this case. However, the off-diagonal contribution is easily characterized in the following result.

\begin{lemma}\label{lemma: ann for block sum pencils}
 Let $T_1 \in W \otimes V_1^{(1)} \otimes V_2^{(1)}$ and $T_2 \in W \otimes V_1^{(2)} \otimes V_2^{(2)}$ be two tensors and let $\frakg$ be a Lie algebra acting on $W$. Let $V_i = V_i^{(1)} \oplus V_i^{(2)}$ for $i=1,2$ and consider the tensor
 \[
T = T_1 + T_2 \in W \otimes V_1 \otimes V_2.
 \]
 Write $\tilde{\frakg} = \frakg \times \frakgl(V_1) \times \frakgl(V_2)$ and $\tilde{\frakg}_1,\tilde{\frakg}_2$ for the similar algebras with $\frakgl(V_i^{(1)}),\frakgl(V_i^{(2)})$. If $T_s$ is concise on the factors $V_i^{(s)}$, then  
\[
\ann_{\tilde{\frakg}} (T) \subseteq  \ann_{\tilde{\frakg}_1} (T_1) + \ann_{\tilde{\frakg}_2} (T_2) + M_1 + M_2,
\]
where $\frakg_1 + \frakg_2$ is regarded as a subalgebra of $\frakg$ and 
\begin{align*}
M_1 &= \{ (X_{21},Y_{12}) \in \Hom(V_1^{(1)},V_1^{(2)}) \oplus \Hom(V_2^{(2)},V_2^{(1)}) : X_{21}T_1 + Y_{12}T_2 = 0 \} \\
M_2 &= \{ (X_{12},Y_{21}) \in \Hom(V_1^{(2)},V_1^{(1)}) \oplus \Hom(V_2^{(1)},V_2^{(2)}) : X_{12}T_2 + Y_{21}T_1 = 0 \}. 
\end{align*}
\end{lemma}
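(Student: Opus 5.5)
We follow the proof of \autoref{lemma: direct sum ann}, decomposing every element of the annihilator into blocks relative to the splittings $V_i = V_i^{(1)} \oplus V_i^{(2)}$. Let $(Y,X,Z) \in \ann_{\tilde{\frakg}}(T)$, with $Y \in \frakg$, $X \in \frakgl(V_1)$ and $Z \in \frakgl(V_2)$. Write
\[
X = \left(\begin{array}{cc} X_{11} & X_{12} \\ X_{21} & X_{22} \end{array}\right), \qquad Z = \left(\begin{array}{cc} Z_{11} & Z_{12} \\ Z_{21} & Z_{22} \end{array}\right),
\]
where $X_{ab} \in \Hom(V_1^{(b)},V_1^{(a)})$ and $Z_{ab} \in \Hom(V_2^{(b)},V_2^{(a)})$. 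We expand $(Y,X,Z).T = Y.T_1 + Y.T_2 + X.T_1 + X.T_2 + Z.T_1 + Z.T_2$. Since $T_s \in W \otimes V_1^{(s)} \otimes V_2^{(s)}$, each summand is a block term lying in exactly one of the four spaces $W \otimes V_1^{(a)} \otimes V_2^{(b)}$ with $a,b \in \{1,2\}$.

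We then sort the terms by block. The $(1,1)$ block receives $Y.T_1 + X_{11}.T_1 + Z_{11}.T_1$, and the $(2,2)$ block receives the analogous terms with $T_2$. The $(2,1)$ block receives $X_{21}.T_1$, which moves the $V_1$ index from $V_1^{(1)}$ to $V_1^{(2)}$, together with $Z_{12}.T_2$, which moves the $V_2$ index from $V_2^{(2)}$ to $V_2^{(1)}$. The $(1,2)$ block receives $X_{12}.T_2 + Z_{21}.T_1$. The four spaces are in direct sum inside $W \otimes V_1 \otimes V_2$, so each block must vanish separately.

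The diagonal blocks give $(Y,X_{11},Z_{11}) \in \ann_{\tilde{\frakg}_1}(T_1)$ and $(Y,X_{22},Z_{22}) \in \ann_{\tilde{\frakg}_2}(T_2)$. The two off-diagonal blocks give exactly $(X_{21},Z_{12}) \in M_1$ and $(X_{12},Z_{21}) \in M_2$. Summing the diagonal and off-diagonal parts recovers $(Y,X,Z)$, which proves the inclusion.

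The one delicate point is the component $Y$. It appears in both diagonal equations, so the decomposition should be read as in \autoref{lemma: direct sum ann}: $Y$ is counted once, and the two annihilator components are viewed inside $\tilde{\frakg}$ with a common $\frakg$-part, which is what the convention "$\frakg_1+\frakg_2$ regarded as a subalgebra" means. The conciseness hypothesis is used in the same way as there, to identify restrictions of operators with elements of $\tilde{\frakg}_s$. It is not needed for the vanishing argument itself. The main obstacle is therefore bookkeeping: checking that each off-diagonal term lands in the correct block. No other step should be difficult.
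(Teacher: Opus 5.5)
Your proof is correct and follows the paper's argument. You split $X$ and $Z$ into blocks and use that the $\frakg$-action preserves each $W\otimes V_1^{(a)}\otimes V_2^{(b)}$. The diagonal block equations then give the annihilators of $T_1,T_2$, and the off-diagonal equations $X_{21}.T_1+Z_{12}.T_2=0$ and $X_{12}.T_2+Z_{21}.T_1=0$ give exactly $M_1$ and $M_2$; the paper writes out only the $M_1$ case and refers to \autoref{lemma: direct sum ann} for the rest.

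Your reading of a common $\frakg$-component is the one under which the statement holds. If the two $\frakg$-parts are added independently, the inclusion fails: take $\frakg=\bbC$ acting by scalars on $W=\bbC$, $T=w\otimes(e_1\otimes f_1+e_2\otimes f_2)$, and the annihilating element $(1,-\id_{V_1},0)$. Also, your block computation never uses conciseness, so that hypothesis is superfluous here, rather than being "used as in the previous lemma" as you say.
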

\begin{proof}
The proof is similar to the one of \autoref{lemma: direct sum ann}. We only need to show that if $(Y,X_1,X_2)$ has a nonzero component in $\Hom(V_1^{(1)},V_1^{(2)}) \oplus \Hom(V_2^{(2)},V_2^{(1)})$, then such component belongs to $M_1$. The case for $M_2$ is similar.

Suppose $(Z,X,Y) \in \frakg \times \frakgl(V_1) \times \frakgl(V_2)$ be an element such that $(Z,X,Y).T = 0$ with $(X,Y)$ having a nonzero component in $\Hom(V_1^{(1)},V_1^{(2)}) \oplus \Hom(V_2^{(2)},V_2^{(1)})$. For every $Z \in \frakg$, we have $Z.T_s \in W \otimes V_1^{(s)} \otimes V_2^{(s)}$ so $Z.T$ has trivial components in the spaces $W \otimes V_1^{(1)} \otimes V_2^{(2)}$ and $W \otimes V_1^{(2)} \otimes V_2^{(1)}$. Therefore, if $(Z,X,Y).T = 0$, the component of $(X,Y)$ in $\Hom(V_1^{(1)},V_1^{(2)}) \oplus \Hom(V_2^{(2)},V_2^{(1)})$ already annihilates $T$. By the conciseness assumption, this is equivalent to the fact that such component belongs to $M_1$.  This concludes the proof.
\end{proof}

It is clear that the spaces $M_1,M_2$ in \autoref{lemma: ann for block sum pencils} are contained in the annihilator. Therefore, as in \autoref{lemma: direct sum ann}, if the action of $\frakg$ is trivial, we deduce that equality holds in the statement of \autoref{lemma: ann for block sum pencils}. 

We conclude this section with a result concerning the dimension of generic orbits in spaces of pencils; see also \cite[Lemma A.1]{GLS24}.
\begin{lemma}\label{prop: pencils orbits}
    Let $T \in \mathbb{C}^2 \otimes V_1 \otimes V_2$ be a generic tensor with $n_i = \dim V_i$.
    \begin{itemize}
    \item If $\dim V_1 \neq \dim V_2$, then the $(\GL(V_1) \times \GL(V_2))$-orbit of $T$ is dense in $\mathbb{C}^2 \otimes V_1 \otimes V_2$. In particular 
    \[
    \dim \ann_{\frakgl(V_1) \times \frakgl(V_2)}(T) = n_1^2 + n_2^2 - 2 n_1 n_2 = (n_1 - n_2)^2.
    \]
    \item If $n_1 = n_2$, then the $(\GL(V_1) \times \GL(V_2))$-orbit of $T$ has codimension $\dim V_1$. In particular
    \[
    \dim \ann_{\frakgl(V_1) \times \frakgl(V_2)}(T) = n_1^2 + n_2^2 - 2 n_1 n_2 + n_1 = n_1.
    \]
    \end{itemize}
\end{lemma}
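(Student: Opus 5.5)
We argue through the Kronecker normal form of a generic pencil, recalled in \autoref{sec: pencils}, together with the block-sum description of annihilators in \autoref{lemma: ann for block sum pencils}, applied with $\frakg$ acting trivially on $W = \bbC^2$ so that the inclusion there is an equality. We first note that the orbit dimension and the annihilator dimension carry the same information. Indeed, $\dim \bigl(\GL(V_1)\times\GL(V_2)\bigr)\cdot T = n_1^2+n_2^2-\dim\ann(T)$, while the ambient space has dimension $2n_1n_2$. Hence, when $n_1\neq n_2$, density of the orbit is equivalent to $\dim \ann = n_1^2+n_2^2-2n_1n_2=(n_1-n_2)^2$. When $n_1=n_2$, codimension $n_1$ is equivalent to $\dim\ann = n_1$. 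In both cases it therefore suffices to compute $\dim \ann_{\frakgl(V_1)\times\frakgl(V_2)}(T)$ for $T$ in the generic normal form, or alternatively to produce the orbit directly.

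\emph{Case $n_1=n_2$.} A generic $T$ is $\bigboxplus_i J_1(z_i)$ with distinct $z_i$. After acting by $\GL(V_1)\times\GL(V_2)$ and a fixed change of basis in $\bbC^2$, we may write $T=a_0\otimes \id + a_1\otimes D$ with $D$ diagonal with distinct entries. The annihilator condition reads $X + Y^t = 0$ and $XD + DY^t = 0$, with suitable transposition conventions. Substituting the first into the second gives $[X,D]=0$, so $X$ is diagonal. Hence $\dim\ann = n_1$. As a sanity check, the invariants of the orbit are exactly the $n_1$ eigenvalues of $A^{-1}B$ for the pencil $(A,B)$, consistent with codimension $n_1$.

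\emph{Case $n_1\neq n_2$.} By symmetry assume $n_1<n_2$; if $n_2>2n_1$ then $T$ is not concise, and we handle this by reducing to the concise range, so suppose $n_2\le 2n_1$. The cleanest route avoids annihilators. The pencil $L_p^{\boxplus\alpha}\boxplus L_{p+1}^{\boxplus\beta}$ has no Jordan part, so it has no continuous invariants. The Kronecker normal forms of $n_1\times n_2$ pencils are parametrized by finitely many discrete data together with eigenvalues, and a generic element has no eigenvalue parameters. The orbit of the generic form is therefore constructible and contains a nonempty Zariski open subset, so it is dense.

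To get the annihilator count rigorously, we use \autoref{lemma: ann for block sum pencils} inductively over the blocks. For a single block, $\ann(L_p)$ has dimension $1$ (scalars, from the kernel of the action, when $\frakg$ is trivial on $\bbC^2$). The off-diagonal spaces $M_1,M_2$ between $L_p$ and $L_q$ have dimensions given by the classical formula for $\dim \Hom$ between Kronecker modules, namely $\max(0,q-p+1)$ in one direction and $\max(0,p-q-1)$ in the other (Demmel--Edelman codimension formulas). Summing over the generic configuration of blocks and checking that the total equals $(n_2-n_1)^2$ is the main obstacle: it is a bookkeeping identity in $p,\alpha,\beta$. We would verify it by expressing $n_2-n_1=\alpha+\beta$ and computing
\[
\alpha^2+\beta^2+2\alpha\beta = (\alpha+\beta)^2 .
\]
Here the diagonal terms contribute $1$ per same-type pair and $2$ per $(L_p,L_{p+1})$ pair. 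Finally, $\dim\ann = (n_1-n_2)^2$ yields density, since the orbit then has dimension $2n_1n_2$.
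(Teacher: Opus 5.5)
Your square case is correct. For $T=a_0\otimes\id+a_1\otimes D$ the annihilator consists of the pairs $(X,-X^{t})$ with $[X,D]=0$, so $X$ is diagonal and $\dim\ann(T)=n_1$.

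Your density argument for $n_1\neq n_2$ is also correct. It is precisely the reasoning behind the lemma, which the paper states without proof, relying on the generic Kronecker forms recorded in \autoref{sec: pencils}. The generic form $L_p^{\boxplus\alpha}\boxplus L_{p+1}^{\boxplus\beta}$ has no continuous invariants, so a single orbit contains a dense open set. The identity $\dim(\GL(V_1)\times\GL(V_2))\cdot T=n_1^2+n_2^2-\dim\ann(T)$ from your first paragraph then gives $\dim\ann(T)=(n_1-n_2)^2$ with no further work.

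You also do not need a reduction for $n_2>2n_1$. The $n_1\times n_2$ matrices of rank $<n_1$ form a locus of codimension $n_2-n_1+1\geq 2$, so a generic pencil has rank $n_1$ at every point of $\bbP^1$. This rules out Jordan and right singular blocks. Pencils made only of blocks $L_q$, $q\geq 0$, therefore form a dense open set that is a finite union of orbits, so one orbit is dense. This also makes your step ``a generic element has no eigenvalue parameters'' self-contained.

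The block-by-block count that you present as the rigorous route is therefore unnecessary, and as written it is wrong. With $\bbC^2$ acted on trivially, the spaces $M_1,M_2$ of \autoref{lemma: ann for block sum pencils} attached to two blocks are the spaces of morphisms between the corresponding Kronecker modules in the two directions. For $L_p,L_q$ these have dimensions $\max(0,p-q+1)$ and $\max(0,q-p+1)$. The number $\max(0,p-q-1)$ that you quote is the Demmel--Edelman codimension contribution of the pair, an $\mathrm{Ext}^1$ dimension, not the dimension of $M_1$ or $M_2$.

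The difference shows up exactly for blocks of equal size. For two copies of $L_p$, both $M_1$ and $M_2$ are one-dimensional, spanned by the identification of the two copies ($L_1\boxplus L_1$ is a quick check by hand). Your formula gives $1+0$ instead. With your values, $\alpha$ copies of $L_p$ would contribute $\alpha+\binom{\alpha}{2}$ rather than $\alpha^2$, so the identity $\alpha^2+\beta^2+2\alpha\beta=(\alpha+\beta)^2$ is asserted rather than derived.

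With the correct values, each block contributes $1$ and each unordered pair of distinct blocks contributes $2$, both for sizes $p,p$ and for sizes $p,p+1$. This gives $N+N(N-1)=N^2$ with $N=\alpha+\beta=n_2-n_1$. If you prefer to invoke Demmel--Edelman, use their codimension formula directly. It gives $\sum_{\epsilon_i>\epsilon_j}(\epsilon_i-\epsilon_j-1)=0$, since all $L$-indices differ by at most $1$, which is density again.
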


\section{Triangle TNS for matrix pencils}\label{section: pencils}

In this section, we prove \autoref{main theorem} for the triangle network $\Delta$ with bond dimensions $\bfm = (m_{01},m_{12},m_{02})$ and physical dimensions $\bfn = (2,n_1,n_2)$ with $n_i = m_{0i}m_{12} - k_i$; see \autoref{fig: one 2}. 

\begin{figure}[!h]
\centering
\begin{tikzpicture}[scale=.5]
\draw (0,0)--(4,6);
\draw (0,0)--(8,0);
\draw (8,0)--(4,6);
\node () at (1.5,3.5) {$m_{01}$};
\node () at (6.5,3.5) {$m_{02}$};
\node () at (4,-0.5) {$m_{12}$};
\draw (0,0)--(-1,-1);
\draw (4,6)--(4,7.5);
\draw (8,0)--(9,-1);
\draw[fill=black] (0,0) circle (.5cm);
\draw[fill=black] (4,6) circle (.5cm);
\draw[fill=black] (8,0) circle (.5cm);
\node[anchor = north east] at (-1,-1){$n_1$};
\node[anchor = north west] at (9,-1){$n_2$};
\node[anchor = south] at (4,7.4){$2$};
\node[anchor = north east] at (-1,-1){\phantom{$\bbC^{n_1}$}};
\node[anchor = north west] at (9,-1){\phantom{$\bbC^{n_2}$}};
\node[anchor = south] at (4,7.4){\phantom{$\bbC^{2}$}};
\node at (0,0){$1$};
\node at (4,6){$2$};
\node at (8,0){$0$};
\node at (-2,2){\phantom{$T = $}};
\end{tikzpicture}
\caption{Triangular tensor network $(\Delta, \bfm,\bfn)$ with bond dimensions $\bfm = (m_{01},m_{12},m_{02})$ and local dimensions $\bfn = (2,n_1,n_2)$.} \label{fig: one 2}
\end{figure}
A special role is played by the condition \eqref{eqn: nontriviality condition}, that we recall here
\begin{equation}
    \tag{\raisebox{0.3mm}{\textreferencemark}}
m_{01} = m_{02} \quad \text{ and } \quad  k_2 \leq k_1 < m_{12} .
\end{equation}
We aim to prove that if this condition does not hold, then $\calTNS^\Delta_{\bfm , \bfn}= V_0 \otimes V_1 \otimes V_2$, that is the tensor network variety fills the ambient space. We start proving this result if $m_{01} \neq m_{02}$.
\begin{theorem}\label{thm: different bond dimensions}
Let $\Delta$ be the triangle graph with bond dimensions $\bfm = (m_{01},m_{12},m_{02})$  with $m_{01} \neq m_{02}$ and local dimensions $\bfn = (2,n_1,n_2)$ with $n_i \leq m_{ij}m_{jk}$ for $\{i,j,k\} = \{0,1,2\}$.

Then
\[
\calTNS^\Delta_{\bfm , \bfn} = V_0 \otimes V_1 \otimes V_2 = \bbC^2 \otimes \bbC^{n_{1}} \otimes \bbC^{n_{2}}.
\]
\end{theorem}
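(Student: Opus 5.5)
The idea is to fix the local tensor at the vertex of physical dimension $2$ and show that what remains is already the whole ambient space. By the symmetry of $\Delta$ exchanging vertices $1$ and $2$, I assume $m_{01} < m_{02}$. Write $W_0 = E_{01}\otimes E_{02}$ (up to dualization). A map $X_0 \in \Hom(W_0,V_0)$ with $V_0 = \bbC^2$ is the same as a pencil $A \in \bbC^2 \otimes \bbC^{m_{01}} \otimes \bbC^{m_{02}}$.

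\textbf{Step 1: contracting at vertex $0$.} Apply $X_0 \otimes \id_{W_1} \otimes \id_{W_2}$ to the graph tensor $T(\Delta,\bfm)$. After regrouping $W_1 = E_{01}^{(*)}\otimes E_{12}$ and $W_2 = E_{02}^{(*)}\otimes E_{12}^*$, the result is
\[
P_A = A \boxtimes \id_{m_{12}} = A^{\boxplus m_{12}} \in \bbC^2 \otimes W_1 \otimes W_2 .
\]
This is a pencil of size $N_1 \times N_2$, where $N_i = m_{0i}m_{12}$. Since $\Phi(X_0,X_1,X_2) = (X_1\otimes X_2)(P_A)$, the variety $\calTNS^\Delta_{\bfm,\bfn}$ contains the closure of
\[
\{ (X_1 \otimes X_2)(P_A) : X_i \in \Hom(W_i,V_i) \}.
\]

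\textbf{Step 2: $P_A$ is generic for generic $A$.} Take $A$ generic. Since $m_{01}<m_{02}$, the Kronecker form recalled in \autoref{sec: pencils} gives
\[
A \sim L_p^{\boxplus\alpha}\boxplus L_{p+1}^{\boxplus\beta}, \qquad m_{01} = p\alpha+(p+1)\beta, \quad m_{02} = (p+1)\alpha+(p+2)\beta .
\]
Then
\[
P_A \sim L_p^{\boxplus m_{12}\alpha}\boxplus L_{p+1}^{\boxplus m_{12}\beta}.
\]
The numbers $N_1 = m_{12}m_{01}$ and $N_2 = m_{12}m_{02}$ decompose with the same $p$ and with multiplicities $(m_{12}\alpha, m_{12}\beta)$. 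So this is exactly the Kronecker form of a generic $N_1\times N_2$ pencil. Since $N_1 \ne N_2$, \autoref{prop: pencils orbits} says the $\GL(W_1)\times\GL(W_2)$-orbit of $P_A$ is dense in $\bbC^2\otimes W_1\otimes W_2$.

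\textbf{Step 3: projecting.} By subcriticality, $n_i \le N_i$, so we may choose $X_i \in \Hom(W_i,V_i)$ surjective. Restricting the closure of the orbit of $P_A$ along all $X_i = \pi_i \circ g_i$, with $g_i \in \GL(W_i)$ and $\pi_i$ a fixed surjection, shows the following: $\calTNS^\Delta_{\bfm,\bfn}$ contains the closure of $(\pi_1\otimes\pi_2)$ applied to a dense subset of $\bbC^2\otimes W_1\otimes W_2$. A surjective linear map sends dense sets to dense sets, and $\calTNS^\Delta_{\bfm,\bfn}$ is closed, so it equals $\bbC^2\otimes\bbC^{n_1}\otimes\bbC^{n_2}$.

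\textbf{Main obstacle.} The delicate point is Step 2: the generic Kronecker form must be compatible with taking block sums of $m_{12}$ copies. This requires checking that the decomposition $(p,\alpha,\beta)$ of $(N_1,N_2)$ is just the scaled one, which follows from uniqueness of $p$.

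The range $m_{02} > 2m_{01}$ also needs care, because there the generic form contains trivial blocks ($L_0$, i.e. zero columns) and \autoref{prop: pencils orbits} is stated for generic tensors in general. I would handle it by the same scaling argument applied to the block-size multiset, or by first reducing to the concise range as in \autoref{subsec: graph tensors}.
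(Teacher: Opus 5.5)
Your proposal is correct and follows the paper's proof. In both, $X_0$ is chosen as a dense-orbit pencil $L_p^{\boxplus\alpha}\boxplus L_{p+1}^{\boxplus\beta}$; its contraction with the graph tensor is $L_p^{\boxplus m_{12}\alpha}\boxplus L_{p+1}^{\boxplus m_{12}\beta}$, which lies in the dense $\GL(W_1)\times\GL(W_2)$-orbit by uniqueness of $(p,\alpha,\beta)$. The only difference is that the paper reduces to the critical case first, while you pass to the subcritical case at the end via the surjections $\pi_i\circ g_i$. Your concern about $m_{02}>2m_{01}$ is settled as you suggest: there $p=0$, the $L_0$ blocks are zero columns, and the same scaling by $m_{12}$ applies.
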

\begin{proof}
    It suffices to prove the statement in the critical case, that is with $n_1 = m_{01}m_{12}$ and $n_2 = m_{02}m_{12}$. Indeed, the tensor network variety in the subcritical case coincides with a linear section of the one in the critical case. If the latter fills the ambient space, the former does as well.
    
    Under the assumption $m_{01} \neq m_{02}$, the space $V_0 \otimes V_1 \otimes V_2 = \bbC^2\otimes \bbC^{m_{12}m_{01}} \otimes \bbC^{m_{12}m_{02}}$ has a dense orbit for the action of $\GL(V_1) \times \GL(V_2)$, as explained in \autoref{sec: pencils}. To show that $\calTNS^\Delta_{\bfm,\bfn}$ fills the ambient space, it suffices to exhibit a linear map $X_0 \in \Hom(W_0,V_0)$ such that $X_0 \cdot T(\Delta,\bfm)$ is an element of such dense orbit.

    Since 
    \[
    \Hom(W_0,V_0) = V_0 \otimes E_{01} \otimes E_{02}^* =\bbC^2 \otimes \bbC^{m_{01}} \otimes \bbC^{m_{02}} ,
    \]
    there exists an element $X_0 \in \bbC^2 \otimes \bbC^{m_{01}} \otimes \bbC^{m_{02}} $ which has a dense orbit for the action of $\GL_{m_{01}} \times \GL_{m_{02}}$, because $m_{01} \neq m_{02}$. Assume without loss of generality that $m_{01} < m_{02}$; from the structure of the normal forms described in \autoref{sec: pencils}, the element having dense orbit can be chosen to be $X_0 = L_p^{\boxplus \alpha} \boxplus L_{p+1}^{\boxplus \beta}$, where $L_p$ is the $p$-th left singular pencil and $p,\alpha,\beta$ are the integers associated with the dimensions $(m_{01},m_{02})$ which are uniquely determined by the condition 
    \begin{equation}\label{eqn: dimensions in Hom}
    \begin{aligned}
    m_{01} &= \alpha \cdot p + \beta \cdot (p+1), \\
    m_{02} &= \alpha \cdot (p+1) + \beta \cdot (p+2).
    \end{aligned}
    \end{equation}
    Regard such $X_0$ as an element of $\Hom(W_0, V_0)$. By construction, the contraction against the graph tensor is 
    \[
    X_0 \cdot T(\Delta,\bfm) = (L_p^{\boxplus \alpha} \boxplus L_{p+1}^{\boxplus \beta}) \boxplus (\bfI_{m_{12}} \otimes e_0) = (L_p^{\boxplus m_{12}\alpha} \boxplus L_{p+1}^{\boxplus m_{12}\beta}).
    \]
    This is an element of the dense orbit in $\bbC^2 \otimes \bbC^{m_{01}m_{12}} \otimes \bbC^{m_{02}m_{12}}$ because multiplying the equations of \eqref{eqn: dimensions in Hom} by $m_{12}$ we see that $p,\alpha m_{12}, \beta m_{12}$ are the integers associated to the dimensions $(m_{01} m_{12} , m_{02}m_{12})$. Since this element belongs to $\calTNS^\Delta_{\bfm , \bfn}$, the dense orbit is contained in the tensor network variety, which therefore fills the ambient space.
\end{proof}

In the following, set $m = m_{01} = m_{02}$ so that $\bfm = (m,m_{12},m)$ and $\bfn = (2,n_1,n_2)$. We aim to establish the structure of the normal forms and compute the dimension of tensor network varieties $\calTNS^{\Delta}_{\bfm,\bfn}$ in this setting: this is done in two steps, first for the critical case, then for the subcritical case.

\subsection{Dimension of triangle TNS with \texorpdfstring{$(m,2,m)$}{(m,2,m)} corner: critical case}

Consider the tensor network as in \autoref{fig:mm1m:critical}. 
\begin{figure}[!h]
\centering
\begin{tikzpicture}[scale=.5]
\draw (0,0)--(4,6);
\draw (0,0)--(8,0);
\draw (8,0)--(4,6);
\node () at (1.5,3.5) {$m$};
\node () at (6.5,3.5) {$m$};
\node () at (4,-0.5) {$m_{12}$};
\draw (0,0)--(-1,-1);
\draw (4,6)--(4,7.5);
\draw (8,0)--(9,-1);
\draw[fill=black] (0,0) circle (.5cm);
\draw[fill=black] (4,6) circle (.5cm);
\draw[fill=black] (8,0) circle (.5cm);
\node[anchor = north east] at (-1,-1){$m_{12}m$};
\node[anchor = north west] at (9,-1){$m_{12}m$};
\node[anchor = south] at (4,7.4){$2$};
\node[anchor = north east] at (-1,-1){\phantom{$\bbC^{n_1}$}};
\node[anchor = north west] at (9,-1){\phantom{$\bbC^{n_2}$}};
\node[anchor = south] at (4,7.4){\phantom{$\bbC^{n_0}$}};
\node at (0,0){$1$};
\node at (4,6){$0$};
\node at (8,0){$2$};
\node at (-2,2){\phantom{$T = $}};
\end{tikzpicture}
\caption{Triangular tensor network with bond dimensions $\mathbf{m} = (m,m_{12},m)$ and $\mathbf{n} = (2,m_{12}m,m_{12}m)$.}\label{fig:mm1m:critical}
\end{figure}
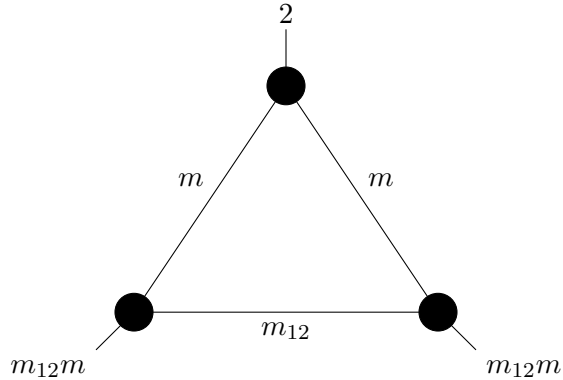

We are going to determine the normal form of an element of $\calTNS^\Delta_{\bfm , \bfn}$ and compute the dimension of the tensor network variety. This will establish \autoref{main theorem}, \autoref{Part1} and \autoref{Part2}, in the critical case.

\begin{theorem}[Dimension with two  equal critical vertices]\label{thm: triangle mm1}
Let $\Delta$ be the triangle graph with bond dimensions $\bfm = (m,m_{12},m)$, and local dimensions $\bfn = (2, m_{12}m, m_{12}m)$. Let $T$ be a generic element of $\calTNS^\Delta_{\bfm , \bfn}$. Then, after a suitable change of coordinates, regarded as a matrix of elements in $V_0 \simeq \bbC^2$,
\[
T = \bigboxplus_{i=1}^m (J_1(z_i) \boxtimes \bfI_{m_{12}} )
\]
for generic $z_i \in V_0$; moreover
\[
\dim \calTNS^\Delta_{\bfm , \bfn} = (2m-1)m m_{12}^2 + m .
\]
\end{theorem}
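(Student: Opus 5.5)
The plan is to use criticality to reduce $\calTNS^\Delta_{\bfm,\bfn}$ to a union of $\GL(V_1)\times\GL(V_2)$-orbits, and then to count dimensions through annihilators. In the critical case $n_1 = m m_{12} = \dim W_1$ and $n_2 = m m_{12} = \dim W_2$. So for generic $(X_0,X_1,X_2)$ the maps $X_1 \in \Hom(W_1,V_1)$ and $X_2 \in \Hom(W_2,V_2)$ are invertible, and they can be absorbed into the action of $\GL(V_1)\times\GL(V_2)$. Hence $\calTNS^\Delta_{\bfm,\bfn}$ is the closure of
\[
\bigcup_{X_0} (\GL(V_1)\times\GL(V_2))\cdot\bigl(X_0\cdot T(\Delta,\bfm)\bigr),
\]
with $X_0$ ranging over $\Hom(W_0,V_0)\simeq V_0\otimes E_{01}\otimes E_{02}^*\simeq \bbC^2\otimes\bbC^m\otimes\bbC^m$.

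As in the proof of \autoref{thm: different bond dimensions}, contracting $X_0$ with the graph tensor gives $X_0\cdot T(\Delta,\bfm) = X_0 \boxtimes \bfI_{m_{12}}$. This holds because the edge $\{1,2\}$ contributes only an identity factor, after the natural identification $W_i \simeq E_{0i}\otimes \bbC^{m_{12}}$. By \autoref{sec: pencils}, a generic $m\times m$ pencil is $X_0 = \bigboxplus_{i=1}^m J_1(z_i)$ with distinct $z_i$, up to $\GL_m\times\GL_m$ acting on $E_{01},E_{02}^*$. This action is itself induced by elements of $\GL(V_1)\times\GL(V_2)$, so it preserves the orbit. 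Therefore the generic element is, up to change of coordinates, $\bigboxplus_{i=1}^m (J_1(z_i)\boxtimes\bfI_{m_{12}})$, which is the normal form claimed.

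For the dimension, consider the map $(g,z_1,\dots,z_m)\mapsto g\cdot \bigboxplus_i (z_i\otimes \bfI_{m_{12}})$ on $\GL(V_1)\times\GL(V_2)\times V_0^{m}$. The rescalings $z_i\mapsto \lambda_i z_i$ are realized by $\GL(V_1)$ acting blockwise, so effectively only the points $[z_i]\in\bbP V_0$ vary. This contributes $m$ parameters. By uniqueness of the Kronecker invariants, distinct unordered $m$-tuples of points in $\bbP^1$ give distinct orbits, so the variety has dimension equal to (orbit dimension) $+\,m$.

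The orbit dimension is $2(mm_{12})^2 - \dim\ann_{\frakgl(V_1)\times\frakgl(V_2)}(T)$. To compute the annihilator, I apply \autoref{lemma: ann for block sum pencils} iteratively with $\frakg$ acting trivially, so that equality holds. Each diagonal block $z_i\otimes\bfI_{m_{12}}$ has annihilator $\{(X,-X^{T})\}$, of dimension $m_{12}^2$. For the off-diagonal terms $M_1,M_2$ between blocks with distinct eigenvalues, the equation $X_{21}(z_i\otimes \bfI) + Y_{12}(z_j\otimes\bfI)=0$ forces $X_{21}=Y_{12}=0$, since $z_i,z_j$ are linearly independent. Hence $\dim\ann = m m_{12}^2$. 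This gives orbit dimension $2m^2m_{12}^2 - m m_{12}^2$, and
\[
\dim\calTNS^\Delta_{\bfm,\bfn} = (2m-1)m m_{12}^2 + m.
\]

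The step I expect to need most care is the claim that the union of orbits has dimension exactly orbit dimension plus $m$. For this I would argue that the incidence map is generically finite modulo stabilizers: the points $[z_i]$ are recovered up to permutation as the roots of $\det T$, which is an invariant of the orbit.
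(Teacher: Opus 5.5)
Your proposal is correct and follows the paper's proof essentially step by step. It uses criticality to absorb invertible $X_1,X_2$ into $\GL(V_1)\times\GL(V_2)$, writes $T = X_0\boxtimes\bfI_{m_{12}}$ with $X_0=\bigboxplus_{i=1}^m J_1(z_i)$ generic, and computes the dimension as $m$ plus the orbit dimension, with annihilator of dimension $m m_{12}^2$ from \autoref{lemma: ann for block sum pencils} and vanishing off-diagonal spaces $M_1,M_2$. Your argument that the fiber of the parametrization is just the stabilizer (recovering the points $[z_i]$ as the roots of $\det T$) is a slightly more explicit version of the paper's remark that $\GL(V_1)\times\GL(V_2)$ only rescales the generalized eigenvalues.
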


\begin{proof}
    Let $T \in \calTNS^\Delta_{\bfm,\bfn}$ be a generic element. We are going to show that up to changing coordinates $T = \bigboxplus_{i=1}^m (J_1(z_i) \boxtimes \bfI_{m_{12}} )$. Write
    \[
    T = (X_0 \otimes X_1 \otimes X_2)T(\Delta,\bfm)
    \]
    where $T(\Delta,\bfm) $ is the graph tensor with bond dimensions $(m,m_{12},m)$. 
    
    For $i=1,2$, we have $\dim V_i = \dim W_i = m_{12}m$, so, after suitable identification, the genericity condition allows us to normalize $X_1 = X_2 = \id_{\bbC^{mm_{12}}}$. Therefore $T$ is uniquely determined by $X_0 \in \Hom(W_0 , V_0) = V_0 \otimes E_1 \otimes E_2^* = \bbC^2 \otimes {\bbC^m}^* \otimes (\bbC^m)^*$. 

    By the genericity of $X_0$, we have 
    \[
    X_0 = \bigboxplus_{i=1}^m J_1(z_i)
    \]
    for certain elements $z_i = v_0 + \zeta_i v_1  \in V_0 = \bbC^2$; here $v_0,v_1$ is a fixed basis of $V_0$. We obtain
    \[
    T = (X_0 \otimes \id_{W_1} \otimes \id_{W_2}) T(\Delta,\bfm) = \bfI_{m_{12}} \boxtimes X_0 = \bfI_{m_{12}} \boxtimes \left(\bigboxplus_{i=1}^m J_1(z_i)\right).
    \]
    This shows the statement about the normal form and proves that $\calTNS^\Delta_{\bfm,\bfn}$ is the closure of the set of pencils of matrices whose Kronecker normal form is diagonal with $m$ diagonal blocks $z_i \cdot \bfI_{m _{12}}$.

    It remains to compute the dimension of the tensor network variety. By the characterization above, an open subset of $\calTNS^{\Delta}_{\bfm,\bfn}$ is parametrized by a map
    \begin{align*}
    \tilde{\Phi} : \bbC^m \times \GL(V_1) \times \GL(V_2)  &\to V_0 \otimes V_1 \otimes V_2 \\
    ((\zeta_1 \vvirg \zeta_m) , g_1,g_2) &\mapsto (g_1 \otimes g_2) ( \bigboxplus_{i =1}^m J_1( v_0 + \zeta_i v_1) \boxtimes \bfI_{m_{12}}).
    \end{align*}
    Since the action of $\GL(V_1) \times \GL(V_2)$ does not change the generalized eigenvalues of a pencil, other than rescaling them, the generic fiber of $\tilde{\Phi}$  is the stabilizer 
    \[
    \Stab_{\GL(V_1) \times \GL(V_2)} ( \bigboxplus_{i =1}^m J_1( v_0 + \zeta_i v_1) \boxtimes \bfI_{m_{12}})
    \]
    for a generic choice of $(\zeta_1 \vvirg \zeta_m) \in \bbC^m$. We deduce 
    \begin{align*}
   \dim \calTNS^{\Delta}_{\bfm,\bfn} &=  2(mm_{12})^2 + m - \dim \Stab_{\GL(V_1) \times \GL(V_2)} ( \bigboxplus_{i =1}^m J_1( v_0 + \xi_i v_1)\boxtimes \bfI_{m_{12}}) \\
   &= 2(mm_{12})^2 + m - \dim \ann_{\frakgl(V_1) \oplus \frakgl(V_2)} (\bigboxplus_{i =1}^m J_1( v_0 + \xi_i v_1)\boxtimes \bfI_{m_{12}}).
    \end{align*}
This is computed applying iteratively \autoref{lemma: ann for block sum pencils} on the tensors $J_1( v_0 + \xi_i v_1)\boxtimes \bfI_{m_{12}}$. The spaces $M_1,M_2$ in the statement of \autoref{lemma: ann for block sum pencils} are trivial because the generalized eigenvalues of the different Jordan blocks are distinct. Therefore, the annihilator
\[
\ann_{\frakgl(V_1) \oplus \frakgl(V_2)} (\bigboxplus_{i =1}^m J_1( v_0 + \xi_i v_1)\boxtimes \bfI_{m_{12}})
\]
is simply the sum of the annihilator of its block-summands, acting on the corresponding subspaces. We deduce the following:
\[
\ann_{\frakgl(V_1) \oplus \frakgl(V_2)}(\bigboxplus_{i =1}^m J_1( v_0 + \xi_i v_1)\boxtimes \bfI_{m_{12}}) = \bigoplus_{1}^m \ann_{\frakgl_{m_{12}} \times \frakgl_{m_{12}}} (v_0+\xi_i v_1) \bfI_{m_{12}} \simeq \bigoplus_{1}^m \frakgl_{m_{12}}.
\]
We conclude that
\[
\dim \calTNS^{\Delta}_{\bfm,\bfn} = 2(mm_{12})^2 + m - m m_{12}^2 = (2m-1)mm_{12}^2 + m.
\]
\end{proof}

\begin{corollary}[Defectivity with two equal critical vertices]\label{cor:triangle_defect}
In the setting of \autoref{thm: triangle mm1}, the fiber defect and the defect are 
\[
\mathrm{fiberdefect}^\Delta_{\mathbf{m},\mathbf{n}}
= \delta^\Delta_{\bfm,\bfn}= (m-1)(m_{12}^2-1).
\]
\end{corollary}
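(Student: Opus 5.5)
The corollary follows by comparing the dimension computed in \autoref{thm: triangle mm1} with the parameter count in \eqref{eq:expdim}. I will first compute the fiber defect, which is a pure parameter count. Then I will check whether the minimum in \eqref{eq:expdim} is attained by the parameter count or by the ambient dimension $\prod n_i$. If it is the parameter count, the defect and the fiber defect coincide.

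For the parameter count with $\bfm=(m,m_{12},m)$ and $\bfn=(2,mm_{12},mm_{12})$, the vertex contributions are as follows. Vertex $0$ has $\prod_{e\ni 0} m_e = m^2$ and contributes $2m^2-1$. Vertices $1$ and $2$ each have $\prod_{e\ni i} m_e = mm_{12}$ and contribute $(mm_{12})^2-1$. The edge terms subtract $2(m^2-1)+(m_{12}^2-1)$, and then we add $1$. This gives
\[
2m^2-1+2m^2m_{12}^2-2-2m^2+2-m_{12}^2+1+1 = 2m^2m_{12}^2-m_{12}^2+1 .
\]
Subtracting $\dim\calTNS^\Delta_{\bfm,\bfn}=2m^2m_{12}^2-mm_{12}^2+m$ from \autoref{thm: triangle mm1} gives
\[
(m-1)m_{12}^2-(m-1)=(m-1)(m_{12}^2-1),
\]
which is the claimed fiber defect.

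The step needing some care is comparing with the ambient dimension $2m^2m_{12}^2$. The parameter count is at most the ambient dimension exactly when $-m_{12}^2+1\le 0$, i.e. when $m_{12}\ge 1$, which always holds. So the minimum in \eqref{eq:expdim} is the parameter count, and therefore $\delta^\Delta_{\bfm,\bfn}=\mathrm{fiberdefect}^\Delta_{\bfm,\bfn}$.

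In the degenerate boundary case $m_{12}=1$, the two quantities in \eqref{eq:expdim} coincide, and the defect is $0$. This agrees with the formula $(m-1)(m_{12}^2-1)=0$, and it is consistent with the variety filling the space of $m\times m$ pencils, whose generic element is a diagonal pencil. No further obstacle is expected; the only real input is \autoref{thm: triangle mm1}.
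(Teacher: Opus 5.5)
Your proposal is correct and follows the paper's proof. You evaluate the parameter count in \eqref{eq:expdim} to get $2m^2m_{12}^2-m_{12}^2+1$, check that it never exceeds the ambient dimension $2m^2m_{12}^2$ (the paper only asserts this), and subtract the dimension from \autoref{thm: triangle mm1} to obtain $(m-1)(m_{12}^2-1)$.
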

\begin{proof}
Evaluating the expected dimension in \eqref{eq:expdim} with $m_{01}=m_{02}=m$ and $\bfn=(2,mm_{12},mm_{12})$, we observe that the minimum is attained by the first term and its value is
\[
\expdim(\calTNS^{\Delta}_{\bfm,\bfn})= 2m^2m_{12}^2-m_{12}^2+1.
\]
Using the dimension formula computed in \autoref{thm: triangle mm1}, we conclude
\[
\mathrm{fiberdefect}^\Delta_{\mathbf{m},\mathbf{n}} = \delta^\Delta_{\bfm,\bfn} =\expdim \calTNS^{\Delta}_{\bfm,\bfn}-\dim \calTNS^\Delta_{\bfm,\bfn} =(m-1)(m_{12}^2-1).
\]
\end{proof}

\subsection{Dimension of triangle TNS with \texorpdfstring{$(m,2,m)$}{(m,2,m)} corner: subcritical case}

Consider the tensor network as in \autoref{fig:mm1m:subcritical}. 
\begin{figure}[!h]
\centering
\begin{tikzpicture}[scale=.5]
\draw (0,0)--(4,6);
\draw (0,0)--(8,0);
\draw (8,0)--(4,6);
\node () at (1.5,3.5) {$m$};
\node () at (6.5,3.5) {$m$};
\node () at (4,-0.5) {$m_{12}$};
\draw (0,0)--(-1,-1);
\draw (4,6)--(4,7.5);
\draw (8,0)--(9,-1);
\draw[fill=black] (0,0) circle (.5cm);
\draw[fill=black] (4,6) circle (.5cm);
\draw[fill=black] (8,0) circle (.5cm);
\node[anchor = north east] at (-1,-1){$m_{12}m - k_1$};
\node[anchor = north west] at (9,-1){$m_{12}m - k_2$};
\node[anchor = south] at (4,7.4){$2$};
\node[anchor = north east] at (-1,-1){\phantom{$\bbC^{n_1}$}};
\node[anchor = north west] at (9,-1){\phantom{$\bbC^{n_2}$}};
\node[anchor = south] at (4,7.4){\phantom{$\bbC^{n_0}$}};
\node at (0,0){$1$};
\node at (4,6){$0$};
\node at (8,0){$2$};
\node at (-2,2){\phantom{$T = $}};
\end{tikzpicture}
\caption{Triangular tensor network with bond dimensions $\mathbf{m} = (m,m_{12},m)$ and $\mathbf{n} = (2,m_{12}m - k_1,m_{12}m-k_2)$.}\label{fig:mm1m:subcritical}
\end{figure}
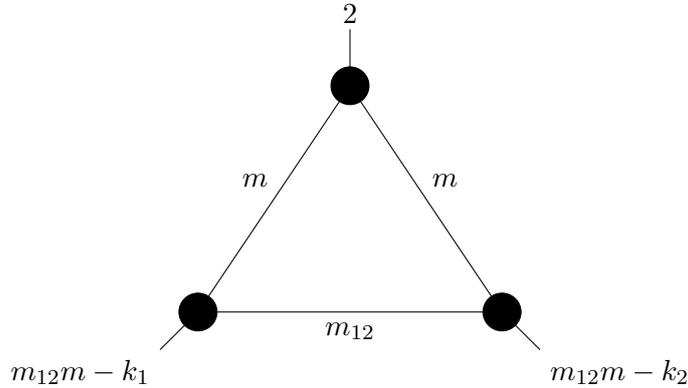

In this case, we will determine the normal form of elements of the tensor network variety and its dimension, completing the proof of \autoref{main theorem}, \autoref{Part1} and \autoref{Part2}.

\begin{theorem}\label{thm: triangular subcritical}
Let $\Delta$ be the triangle graph with bond dimensions $\bfm = (m,m_{12},m)$, and local dimensions $\bfn = (2,m_{12}m-k_1, m_{12}m-k_2)$ with $k_2 \leq k_1$. Let $T$ be a generic element of $\calTNS^\Delta_{\bfm , \bfn}$. Then, after a suitable change of coordinates, $T = T_1 \boxplus T_2$ where, regarded as a matrix of elements in $V_0 \simeq \bbC^2$,
\[
T_1 = \bigboxplus_{i=1}^{m} (J_1(z_i) \boxtimes \bfI_{m_{12}-k_1} )
\]
for generic $z_i \in V_0$ and 
\[
T_2 \in V_0 \otimes \bbC^{(m-1)k_{1}} \otimes \bbC^{(m-1) k_{1} + (k_{1}-k_{2})}
\]
is generic. Moreover
\[
\dim \calTNS^\Delta_{\bfm , \bfn} =  2 m^2 m_{12}^{2} - m m_{12}^{2} - m m_{12} k_{1} - m m_{12} k_{2} - m k_{1} k_{2} + 2 k_{1} k_{2} + m .
\]
\end{theorem}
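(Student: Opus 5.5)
Any element of $\calTNS^\Delta_{\bfm,\bfn}$ is obtained from the critical tensor of \autoref{thm: triangle mm1} by restriction. For generic local maps, $X_1 \in \Hom(W_1,V_1)$ and $X_2 \in \Hom(W_2,V_2)$ are surjective. After the normalizations used in the proof of \autoref{thm: triangle mm1}, a generic $T$ is therefore the image of $S = \bigboxplus_{i=1}^m (J_1(z_i)\boxtimes \bfI_{m_{12}})$ under generic projections
\[
\pi_1 : \bbC^{mm_{12}} \to \bbC^{mm_{12}-k_1}, \qquad \pi_2 : \bbC^{mm_{12}} \to \bbC^{mm_{12}-k_2}.
\]
Dually, $T$ is the restriction of the pencil $S$ to generic subspaces $A\subseteq (\bbC^{mm_{12}})^*$ of codimension $k_1$ and $B$ of codimension $k_2$. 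The plan is to compute the Kronecker form of such a restriction directly.

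\textbf{Step 1: the Jordan part.} Write $U_i$ for the $m_{12}$-dimensional eigenspace (on the row side and the column side) of the eigenvalue $z_i$. I expect the restricted pencil to keep the eigenvalue $z_i$ exactly on a row space of dimension $\dim(A\cap U_i^{\mathrm{row}}) = m_{12}-k_1$, using $k_1<m_{12}$ and genericity. Similarly, $z_i$ survives on a column space of dimension $m_{12}-k_2\ge m_{12}-k_1$. More precisely, evaluating $T$ at the point of $\bbP V_0^*$ where $z_i$ vanishes gives $S(z_i^\perp)$, which has kernel $U_i$ on each side. The rank drop of $T$ at that point is at least $(m_{12}-k_1)$, since $T(z_i^\perp)$ kills $A\cap U_i$. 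I will write down explicit splittings
\[
\bbC^{mm_{12}-k_1} = \Big(\textstyle\bigoplus_i A_i\Big)\oplus A', \qquad A_i \subseteq U_i,\ \dim A_i = m_{12}-k_1,
\]
and similarly on the column side, with $B_i\subseteq U_i$ of dimension $m_{12}-k_1$ (a generic subspace of $B\cap U_i$), together with complements $A'$, $B'$. These splittings show that $T$ decomposes as $T_1\boxplus T_2$, with $T_1 = \bigboxplus_i J_1(z_i)\boxtimes \bfI_{m_{12}-k_1}$. The complementary sizes are then $mm_{12}-k_1 - m(m_{12}-k_1) = (m-1)k_1$ and $mm_{12}-k_2 - m(m_{12}-k_1) = (m-1)k_1 + (k_1-k_2)$, as claimed.

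\textbf{Step 2: genericity of $T_2$.} The main obstacle is to show that the complementary block $T_2$ is generic. This requires both that it contains no further Jordan blocks at the $z_i$ and that it is a generic pencil of its size. My first attempt is a dimension/degeneration argument: exhibit one explicit choice of $(A,B)$ for which $T_2$ is a chosen generic pencil of the right size. Since having the generic Kronecker type is an open condition, this suffices. Concretely, I would build $A,B$ by taking $m_{12}-k_1$ coordinate directions in each $U_i$, and gluing the remaining directions across different $U_i$'s through linear combinations. A cross-gluing of this kind mixes eigenvalues and realizes arbitrary Jordan or singular blocks; a $2\times 2$ example shows that restricting $J_1(z_1)\boxplus J_1(z_2)$ to a diagonal row gives $(z_1\; z_2)\sim L_1$. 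Alternatively, I would use \cite[Theorem 3]{Pok86} to argue that the restricted pencil degenerates to any pencil of the block form, and compare dimensions.

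\textbf{Step 3: dimension.} The dimension then follows as in \autoref{thm: triangle mm1}. Parametrize an open set by $(\zeta_1,\dots,\zeta_m)$, the generic $T_2$ (which sweeps a dense orbit, or an orbit of codimension $(m-1)k_1$ when $k_1=k_2$; this case is accounted for by its own eigenvalues via \autoref{prop: pencils orbits}), and $\GL(V_1)\times\GL(V_2)$. Subtract the dimension of the annihilator, computed with \autoref{lemma: ann for block sum pencils}. Within $T_1$ the annihilator is $\bigoplus_i \frakgl_{m_{12}-k_1}$. The annihilator of $T_2$ is given by \autoref{prop: pencils orbits}. The off-diagonal spaces $M_1,M_2$ between the distinct Jordan blocks $J_1(z_i)$ vanish. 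Between $T_1$ and $T_2$, the spaces $M_1,M_2$ are computed from the Kronecker structure of $T_2 = L_p^{\boxplus\alpha}\boxplus L_{p+1}^{\boxplus\beta}$: $\Hom$-spaces between $J_1(z)$ and $L_q$ contribute $q$ or $q+1$ dimensions in one direction and none in the other, which yields the $k_1k_2$-type correction terms. Summing and simplifying should give the stated formula, which I will sanity-check against \autoref{thm: triangle mm1} at $k_1=k_2=0$.
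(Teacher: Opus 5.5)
Your Step 1 is sound, and it reaches one inclusion by a more elementary route than the paper. Take $A_i=A\cap U_i$, take $B_i\subseteq B\cap U_i$ pairing nondegenerately with $A_i$, and set $B'=B\cap(\bigoplus_i A_i)^{\perp}$ and $A'=A\cap(\bigoplus_i B_i)^{\perp}$. Then all cross terms vanish, so $T=T_1\boxplus T|_{A'\times B'}$. Since $T$ evaluated at $z_i^{\perp}$ has rank exactly $(m-1)m_{12}$, the block $T_2$ has no eigenvalue $z_i$. This shows $\calTNS^\Delta_{\bfm,\bfn}$ lies in the closure of the block-form pencils, which the paper gets from inequality (10) of \cite[Thm.~3]{Pok86} with $j=1$.

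The gap is Step 2, which is the real content of the theorem: every $T_1\boxplus T_2$ with $T_2$ generic lies in $\calTNS^\Delta_{\bfm,\bfn}$.
\begin{itemize}
\item The gluing construction is only announced, not carried out.
\item For $k_1=k_2$ a single example would not suffice anyway. Genericity of the square block $T_2$ means its $(m-1)k_1$ eigenvalues vary freely, which is a dominance statement, not an open condition. The count $m+(m-1)k_1$ in your Step 3 relies on it.
\item For $k_1>k_2$ you would still have to show that $T_2$ is $L_p^{\boxplus\alpha}\boxplus L_{p+1}^{\boxplus\beta}$, i.e.\ that its minimal indices are balanced.
\item Your Pokrzywa alternative, with the generic restriction $T$ as source, is circular, because the Kronecker type of $T$ is exactly what is unknown. ``Compare dimensions'' has nothing to compare with, since your dimension count presupposes Step 2.
\end{itemize}
The missing idea is to apply \cite[Thm.~3]{Pok86} with the critical pencil $S=\bigboxplus_i J_1(z_i)\boxtimes\bfI_{m_{12}}$ as source. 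The target is $T_1\boxplus T_2$, padded with $k_1$ zero rows and $k_2$ zero columns. Since $S$ has no singular blocks, conditions (8) and (9) are automatic. Condition (10) reduces to $m_{12}\le jk_1+(m_{12}-k_1)$ for $j\ge 1$, which holds for every $T_2$. Hence every such pencil is a degeneration of $S$ and lies in $\calTNS^\Delta_{\bfm,\bfn}$.

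Step 3 also contains a computational error. The off-diagonal annihilator between $J_1(z)$ and $L_q$ does not have dimension $q$ or $q+1$. The equation $x\,z+L_q\,y=0$, with $x\in\bbC^q$ and $y\in\bbC^{q+1}$, imposes $2q$ independent linear conditions on $2q+1$ unknowns, so its solution space is $1$-dimensional. The opposite direction contributes $0$; equivalently, the only nonzero $\Hom$ space between the Kronecker modules of $J_1(z)$ and $L_q$ is $1$-dimensional. Summing your values over the blocks of $T_2$ would give $m(m_{12}-k_1)$ times either the number of rows, $(m-1)k_1$, or the number of columns, $mk_1-k_2$, of $T_2$. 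With that, the stated formula does not come out in general. The correct values are $M_2=0$ and $\dim M_1=m(m_{12}-k_1)(k_1-k_2)$: one dimension for each pair consisting of a Jordan block of $T_1$ and one of the $k_1-k_2$ blocks $L_q$ of $T_2$. The paper obtains this by proving that $\phi_2$ is injective and $\phi_1$ is surjective, then subtracting dimensions.
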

\begin{proof} The proof is divided into several steps: after setting up the notation, we compute the Kronecker invariants of the pencils in the relevant tensor network varieties; we use the characterization of \cite{Pok86} to show that the pencils described in the statement belong to the tensor network variety and that any generic element of the tensor network variety is a limit of such pencils; finally, we compute the dimension of the variety using \autoref{lemma: ann for block sum pencils}.

\medskip
{\textbf{Setup and notation.}} We use the characterization of \cite{Pok86} of degeneration of matrix pencils and the characterization of the generic elements of $\calTNS^\Delta_{\bfm, (m_{12}m, m_{12}m, 2)}$ obtained in \autoref{thm: triangle mm1}. It is useful to fix the following notation: 
\begin{align*}
A(\zeta_1 \vvirg \zeta_m) &= \bigboxplus_{i=1}^{m} J_1(v_0 + \zeta_i v_1) \boxtimes \bfI_{m_{12}} , \\
B(\zeta_1 \vvirg \zeta_m) &= \bigboxplus_{i=1}^{m} J_1(v_0 + \zeta_i v_1) \boxtimes \bfI_{m_{12}-k_1} \boxplus T_2,
\end{align*} 
for some (generic) $\zeta_i \in \bbC$ and $T_2 \in V_0 \otimes \bbC^{(m-1)k_{1}} \otimes \bbC^{(m-1) k_{1} + (k_{1}-k_{2})}$ generic. By \autoref{thm: triangle mm1}, $A(\zeta_1 \vvirg \zeta_m)$ is a generic element of $\calTNS^\Delta_{\bfm, (m_{12}m, m_{12}m, 2)}$. Moreover $B(\zeta_1 \vvirg \zeta_m)$ is an element of $V_0 \otimes V_1 \otimes V_2$ described in the statement of the Theorem.

We are going to prove that $B(\zeta_1 \vvirg \zeta_m)$ is a degeneration of $A(\zeta_1 \vvirg \zeta_m)$. This shows that $B(\zeta_1 \vvirg \zeta_m) \in \calTNS^\Delta_{\bfm , \bfn}$. Moreover, we show that if $B' \in V_0 \otimes V_1 \otimes V_2$ is a degeneration of $A(\zeta_1 \vvirg \zeta_m)$ then $B'$ is limit of a sequence of elements of the form $B(\zeta_1 \vvirg \zeta_m)$. This will provide the statement about the normal form. The computation of the dimension will then follow using \autoref{lemma: ann for block sum pencils}.

\medskip
{\textbf{Step 1: Computation of Kronecker invariants.}}
To show that $B(\zeta_1 \vvirg \zeta_m)$ is a degeneration of $A(\zeta_1 \vvirg \zeta_m)$ we use \cite[Thm. 3]{Pok86} which relies on certain inequalities among the Kronecker invariants of the pencils. Regard $B(\zeta_1 \vvirg \zeta_m)$ as an element of $V_0 \otimes W_1 \otimes W_2$ under suitable embeddings $V_1 \to W_1$ and $V_2 \to W_2$. 

For any pencil $P \in V_0 \otimes W_1 \otimes W_2$, let $d_{p,\zeta}(P)$ be the number of Jordan blocks of size $p$ with generalized eigenvalue $v_0 + \zeta v_1$ in the Kronecker decomposition of $P$. We have 
\begin{equation}\label{eqn: d invariants Pok}
\begin{aligned}
d_{p,\zeta}(A(\zeta_1 \vvirg \zeta_m)) &= \left\{\begin{array}{ll}
m_{12} & \text{if $p =1$ and $\zeta=\zeta_i$ for some $i$,}\\
0 & \text{otherwise},
\end{array}
\right. \\
d_{p,\xi}(B(\zeta_1 \vvirg \zeta_m)) &= \left\{\begin{array}{ll}
m_{12}-k_1 & \text{if $p =1$ and $\xi=\xi_i$ for some $i$,}\\
0 & \text{otherwise}.
\end{array}
\right. 
\end{aligned}
\end{equation}
Similarly, for a pencil $P$, let $\ell_p(P)$ (resp. $r_p(P)$) be the number of left (resp. right) singular blocks of size $p$ in the Kronecker decomposition of $P$. Kronecker blocks of size $0\times 1$ and $1 \times 0$ reflect the fact that $P$ is not concise as a tensor in $V_0 \otimes W_1 \otimes W_2$. Let $\ell(P)$ (resp. $r(P)$) be the total number of left (resp. right) singular blocks in the Kronecker decomposition of $P$. We have 
\begin{align*}
    r_p(A(\zeta_1 \vvirg \zeta_m)) &= \ell_p(A(\zeta_1 \vvirg \zeta_m)) = 0 \text{ for all } p, \\
r_0(B(\zeta_1 \vvirg \zeta_m)) &= r(B(\zeta_1 \vvirg \zeta_m)) = k_{1},\\
\ell_0(B(\zeta_1 \vvirg \zeta_m)) &= k_{2} \\
\ell_{\bar{p}}(B(\zeta_1 \vvirg \zeta_m)) &\neq 0 \text{ and } \ell_{\bar{p}+1}(B(\zeta_1 \vvirg \zeta_m)) \neq 0.
\end{align*}
The integer $\bar{p}$ is determined by the Kronecker structure of the generic pencil in $\bbC^{(m-1)k_{1}} \otimes \bbC^{(m-1) k_{1} + (k_{1}-k_{2})} \otimes V_3$ as explained in \autoref{sec: pencils}. Such a condition in particular implies that 
\[
\ell_{\bar{p}}(B(\zeta_1 \vvirg \zeta_m)) + \ell_{\bar{p}+1}(B(\zeta_1 \vvirg \zeta_m)) = k_{1} - k_{2}.
\] 
Now, \cite[Thm. 3]{Pok86} states that $B(\zeta_1 \vvirg \zeta_m)$ is a degeneration of $A(\zeta_1 \vvirg \zeta_m)$ if and only if three inequalities between these structural coefficients are satisfied. Since $\ell_p(A(\zeta_1 \vvirg \zeta_m)) = r_p(A(\zeta_1 \vvirg \zeta_m)) = 0$, Equations (8) and (9) in \cite[Thm. 3]{Pok86} are immediately satisfied. Equation (10)  for the degeneration of $A(\zeta_1 \vvirg \zeta_m)$ into $B(\zeta_1 \vvirg \zeta_m)$ reduces to
\[
d_{1,\zeta_i}(A(\zeta_1 \vvirg \zeta_m)) \leq j \cdot k_{1} + d_{1,\zeta_i}(B(\zeta_1 \vvirg \zeta_m)) \quad \text{ for all } j \geq 1
\]
which is satisfied because of \autoref{eqn: d invariants Pok} since $j \geq 1$.

This shows that $B(\zeta_1 \vvirg \zeta_m)$ is a degeneration of $A(\zeta_1 \vvirg \zeta_m)$ and therefore all tensors described in the statement of the theorem belong to $\calTNS^\Delta_{\bfm,\bfn}$.

\medskip
{\textbf{Step 2: Generic form of elements in $\calTNS^\Delta_{\bfm,\bfn}$.}}
To show that the tensors of the form $B(\zeta_1 \vvirg \zeta_m)$ form a dense subset of $\calTNS^\Delta_{\bfm,\bfn}$, we are going to show that if $C$ is a generic enough element of $\calTNS^{\Delta}_{\bfm,\bfn}$ then there exists a choice of $\zeta_1 \vvirg \zeta_m$ and $T_2$ such that 
\[
B(\zeta_1 \vvirg \zeta_m) = \bigboxplus_{i=1}^{m} (J_1(v_0 + \zeta_i v_1) \boxtimes \bfI_{m_{12}-k_1} ) \boxplus T_2
\]
degenerates to $C$. If $C \in \calTNS^\Delta_{\bfm,\bfn}$ is generic enough, then $C$ is a degeneration of $A(\zeta_1 \vvirg \zeta_m) \in \calTNS^\Delta_{\bfm, (m_{12}m, m_{12}m, 2)}$ for some choice of $(\zeta_1 \vvirg \zeta_m) \in \bbC^m$. This degeneration can be interpreted in terms of the inequalities of \cite[Thm. 3]{Pok86}. If $C$ is generic enough, then $C$ is concise in $\bbC^2 \otimes \bbC^{n_1} \otimes \bbC^{n_2}$, so that, regarding it as an element of $V_0 \otimes W_1 \otimes W_2 $, we have 
\[
r_0(C) = k_1, \qquad \ell_0(C) = k_2.
\]
Since $A(\zeta_1 \vvirg \zeta_m)$, regarded as a matrix of linear forms, is generally full rank, we have that $C$ is generically of rank $n_1$ because $n_1 \leq n_2$. In particular, $C$ has no other right singular blocks besides the $k_1$ blocks of size $0 \times 1$. Therefore $r_p(C) = 0$ for all $p \geq 1$ and $r(C) = r_0(C) = k_1$. This shows that the right singular component of $C$ coincides with the right singular component of $B(\zeta_1 \vvirg \zeta_m)$, for any choice of $T_2 \in \bbC^2 \otimes  \bbC^{(m-1)k_{1}} \otimes \bbC^{(m-1) k_{1} + (k_{1}-k_{2})}$.

Consider equation (10) of \cite[Thm. 3]{Pok86} applied to the degeneration of $A(\zeta_1 \vvirg \zeta_m)$ into $C$. In the instance $j = 1$, and for every generalized eigenvalue $z_i = v_0 + \zeta_i v_1$, we have 
\[
m_{12} \leq k_1 + d_{1,\zeta_i}(C)
\]
showing $d_{1,\zeta_i}(C) \geq m_{12} - k_1$ for every $i = 1 \vvirg m$. This shows that the pencil $C$ has at least $m$ distinct eigenvalues, each with multiplicity at least $m_{12} - k_1$. This shows $C = \bigboxplus_{i=1}^{m} J_1(v_0 + \zeta_i v_1) \boxplus C_2$, with $C_2 \in \bbC^2 \otimes  \bbC^{(m-1)k_{1}} \otimes \bbC^{(m-1) k_{1} + (k_{1}-k_{2})}$. Therefore, $C_2$ is degeneration of some suitable $T_2$ generic in $\bbC^2 \otimes  \bbC^{(m-1)k_{1}} \otimes \bbC^{(m-1) k_{1} + (k_{1}-k_{2})}$ and in turn we obtain that $C$ is a degeneration of $B(\zeta_1 \vvirg \zeta_m)$.

This concludes the proof concerning the normal form of tensors in $\calTNS^\Delta_{\bfm,\bfn}$.

\medskip
{\textbf{Step 3: Dimension computation.}}
The dimension of $\calTNS^\Delta_{\bfm,\bfn}$ is computed using \autoref{lemma: ann for block sum pencils}. Note that a generic element of $\calTNS^\Delta_{\bfm,\bfn}$ has $m$ distinct eigenvalues if $k_{1} \neq k_{2}$ and $m+ k_{1}(m-1)$ distinct eigenvalues if $k_{1} = k_{2}$. Therefore, we have 
\begin{equation}
\label{eqn:dimTNS with big GL}
\dim \calTNS^\Delta_{\bfm,\bfn} = \tilde{m} + \dim (\GL(V_{1}) \times \GL(V_{2}) \cdot T) 
\end{equation}
where 
\[
\tilde{m}:=\left\{ \begin{array}{ll}
    m,  & k_{1} \neq k_{2}, \\
    m+ k_{1}(m-1), & k_{1} = k_{2}
\end{array} \right.
\]
and $T$ is a generic element of $\calTNS^\Delta_{\bfm,\bfn}$.

Fix a generic $T  \in \calTNS^\Delta_{\bfm,\bfn}$ and write $T =  T_1 \boxplus T_2$ as in the characterization proved above; therefore 
\begin{align*}
T_1 &= \bigboxplus_{i=1}^{m} J_1(v_0 + \zeta_i v_1) \boxtimes \bfI_{m_{12}-k_{1}}  \in V_0 \otimes V_{1}' \otimes V_{2}' \\
T_2 &\text{ generic in } \in V_0 \otimes V_{1}'' \otimes V_{2}''
\end{align*}
with $\dim V_{1}' = \dim V_{2}' = m (m_{12}-k_{1})$ and $\dim V_{1}'' = (m-1)k_{1}$ and $\dim V_{2}'' = (m-1)k_{1} + (k_{1}-k_{2})$. We are going to apply \autoref{lemma: ann for block sum pencils}. With the same calculation as in the proof of \autoref{thm: triangle mm1}, we obtain
\[
\dim \ann(T_1) = m (m_{12}-k_{1})^2.
\]
Since $T_2$ is generic, using \autoref{prop: pencils orbits}, we obtain 
\begin{align*}
\dim \ann(T_2) &= ((m-1)k_{1})^2 +((m-1)k_{1} + (k_{1}-k_{2}))^2 \\
&- (2((m-1)k_{1})((m-1)k_{1} + (k_{1}-k_{2})) - c) = (k_{1}-k_{2})^2 + c,
\end{align*}
where $c$ is the codimension of the generic orbit in $\bbC^2 \otimes V_{1}'' \otimes V_{2}''$, that is 
\[
c: = \left\{\begin{array}{ll}
    0, &   \hbox{if } k_{1} \neq k_{2}\\
    (m-1)k_{1}, & \hbox{if } k_{1} = k_{2}.
\end{array}
\right.
\]
It remains to compute the dimension of the spaces $M_1,M_2$ of \autoref{lemma: ann for block sum pencils}. We will show $M_2 = 0$ and $\dim M_1 = m(m_{12}-k_{1})(k_{1}-k_{2})$. Note that $M_1,M_2$ are the kernels of the two linear maps:
\begin{align*}
\phi_1 : \Hom(V'_{1},V''_{1}) \oplus \Hom(V''_{2},V'_{2}) &\to \bbC^2 \otimes V''_{1} \otimes V'_{2} \\
(X,Y) &\mapsto X\cdot T_1 + Y \cdot T_2 \\
\phi_2 : \Hom(V''_{1},V'_{1}) \oplus \Hom(V'_{2},V''_{2}) &\to \bbC^2 \otimes V'_{1} \otimes V''_{2} \\
(X,Y) &\mapsto X\cdot T_2 + Y \cdot T_1.
\end{align*}

\medskip
{\textbf{Analysis of the maps $\phi_1$ and $\phi_2$.}}
To show that $\phi_2$ is injective, regard $T_1,T_2$ as matrices of linear forms, parametrizing the image of the flattening maps
\[
T_1 : {V_{2}'}^* \to \bbC^2 \otimes V_{1}' \qquad T_2 : {V_{2}''}^* \to \bbC^2 \otimes V_{1}''.
\]
Explicitly, we have 
\[
T_1 = \left[\begin{array}{ccc|c|ccc}
e_1 & \cdots & e_{m_{12}-k_{1}}& \cdots & e_{(m-1)(m_{12}-k_{1})+1} & \cdots & e_{m(m_{12}-k_{1})} \\
\zeta_1 e_1 & \cdots & \zeta_1e_{m_{12}-k_{1}} & \cdots &\zeta_m e_{(m-1)(m_{12}-k_{1})+1} & \cdots & \zeta_m e_{m(m_{12}-k_{1})} \\
\end{array} \right]
\]
where $e_{1} \vvirg e_{m(m_{12}-k_{1})}$ is a basis of $V_{2}'$. Similarly 
\[
T_2 = \left[ \begin{array}{ccc}
z_{1,1} & \cdots & z_{1,(m-1)k_{1}} \\
z_{2,1} & \cdots & z_{2,(m-1)k_{1}} \\
\end{array}
\right]
\]
where $z_{i,j}$ are generic elements of $V_{2}''$. The element $X \in \Hom(V_{1}'',V_{1}')$ acts on (the matrix of) $T_2$ by multiplication on the right. The element $Y \in \Hom(V_{2}',V_{2}'')$ acts on (the entries of) $T_1$ by linear change of coordinates. If $(X,Y) \in M_2$, then $X\cdot T_2 + Y \cdot T_1 = 0$, so $X \cdot T_2  = T_2 X^\bft$ has the same structure as $T_1$: it is a $2 \times m(m_{12}-k_{1})$ matrix consisting of $m$ submatrices of size $2 \times (m_{12}-k_{1})$ placed side by side, each of rank one. Since the entries of $T_2$ are generic and $\dim V_{1}'' \leq \dim V_{2}''$, this yields a contradiction. We obtain $M_2 = 0$. 

The same argument shows that if $k_{1} = k_{2}$ then $\phi_1$ is injective as well, because in this case $T_2$ is a square pencil. In particular $M_1 = 0$ if $k_{1} = k_{2}$. Moreover, in this case $\phi_1$ is surjective as well, because if $k_{1} = k_{2}$ the domain and the codomain of $\phi_1$ have the same dimension.

If $k_{1} > k_{2}$ then the map $\phi_1$ is even more evidently surjective, because it is surjective even if the linear map $Y$ is evaluated on a generic tensor $\tilde{T}_2 \in \bbC^2 \otimes V_{1}'' \otimes \bar{V_{2}''}$ for a subspace $\bar{V_{2}''} \subseteq V_{2}''$ of dimension equal to $\dim V_{1}''$. By upper semicontinuity of the matrix rank, $\phi_1$ is surjective. We obtain 
\begin{align*}
\dim M_1 = &\dim \ker \phi_1 = \\
&\dim \Hom ( V_{1}',V_{1}'') + \dim \Hom ( V_{2}'',V_{2}') - \dim ( \bbC^2 \otimes V_{1}'' \otimes V_{2}') = \\
&m(m_{12} -k_{1}) \cdot ((m-1)k_{1}) + ((m-1)k_{1} + (k_{1}-k_{2}) ) \cdot m(m_{12}-k_{1}) \\
& \qquad - 2 \cdot ((m-1)k_{1}) \cdot m(m_{12} -k_{1}) = \\ 
&m(m_{12}-k_{1})(k_{1} - k_{2}).
\end{align*}

\medskip
{\textbf{Step 4: Final formula.}}
Applying \eqref{eqn:dimTNS with big GL} and \autoref{lemma: ann for block sum pencils}, and using $\tilde{m} -c = m$, the dimension of $\mathcal{TNS}^\Delta_{\bfm,\bfn}$ turns out to be:
\begin{align*}
    \dim \mathcal{TNS}^\Delta_{\bfm,\bfn} = &\tilde{m} + \dim \GL(V_{1}) + \dim \GL (V_{2}) - \dim \ann(T)  = \\
    & \tilde{m} + n_{1}^2 + n_{2}^2 - \dim \ann(T_1) - \dim \ann(T_2) -  \dim M_1 - \dim M_2 = \\
    &\tilde{m} + n_{1}^2 + n_{2}^2 - [m (m_{12}-k_{1})^2] - [(k_{1}-k_{2})^2 + c] - \\
    &\qquad [m(m_{12}-k_{1})(k_{1}-k_{2})] - 0 = \\
    &m +  [m(m_{12}-k_{1}) + (m-1)k_{1})]^2 + \\
    & \qquad [m(m_{12}-k_{1}) + (m-1)k_{1} + (k_{1}-k_{2})]^2  -\\
    & \qquad m (m_{12}-k_{1})^2 - (k_{1}-k_{2})^2  - m(m_{12}-k_{1})(k_{1}-k_{2})\\
    =& 2m^2m_{12}^{2}-mm_{12}^{2}- mm_{12}k_{1} - mm_{12}k_{2} - mk_{1}k_{2} +2k_{1}k_{2}+m.
\end{align*}
This concludes the proof concerning the dimension of $\calTNS^\Delta_{\bfm,\bfn}$.
\end{proof}

\begin{corollary}\label{cor:defect_subcritical}
    In the setting of \autoref{thm: triangular subcritical}, the fiber defect of $\calTNS^\Delta_{\bfm,\bfn}$ is 
\[
\mathrm{fiberdefect}^\Delta_{\bfm,\bfn} = 
(m-2)k_1k_2+(m-1)(m_{12}^2-1).
\]
\end{corollary}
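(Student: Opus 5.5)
The fiber defect is, by definition, the parameter count minus the actual dimension. The actual dimension is already given by \autoref{thm: triangular subcritical}, so the plan is a direct computation in three steps: evaluate the first term of \eqref{eq:expdim} for $(\Delta,\bfm,\bfn)$, subtract the dimension formula, and simplify. No further geometric input is needed. In particular we never need to decide which term realizes the minimum in \eqref{eq:expdim}, because the fiber defect always uses the parameter count and not $\expdim$.

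First I evaluate the parameter count with $\bfm=(m,m_{12},m)$ and $\bfn=(2,mm_{12}-k_1,mm_{12}-k_2)$. The vertex $0$ has incident bonds $m,m$, and the vertices $1,2$ each have incident bonds $m,m_{12}$. The vertex contributions are therefore
\[
(2m^2-1)+\bigl((mm_{12}-k_1)mm_{12}-1\bigr)+\bigl((mm_{12}-k_2)mm_{12}-1\bigr).
\]
The edge contribution is
\[
-\bigl(2(m^2-1)+(m_{12}^2-1)\bigr)+1.
\]
Collecting terms, the $2m^2$ cancels and the parameter count becomes
\[
2m^2m_{12}^2 - mm_{12}(k_1+k_2) - m_{12}^2 + 1 .
\]

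Next I subtract the dimension from \autoref{thm: triangular subcritical}. The terms $2m^2m_{12}^2$ and $-mm_{12}(k_1+k_2)$ cancel. What remains is
\[
-m_{12}^2+1+mm_{12}^2+mk_1k_2-2k_1k_2-m .
\]
Grouping $(m-1)m_{12}^2-(m-1)$ and $(m-2)k_1k_2$ gives $(m-2)k_1k_2+(m-1)(m_{12}^2-1)$, as claimed. As a sanity check, setting $k_1=k_2=0$ recovers \autoref{cor:triangle_defect}.

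There is no real obstacle here. The only point needing care is bookkeeping: the $-1$'s in the vertex terms, the $+1$ at the end, and the factor $2$ from the two bonds of dimension $m$ must all be tracked correctly.
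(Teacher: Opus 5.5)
Your proposal is correct and is the same approach as the paper: subtract the dimension from \autoref{thm: triangular subcritical} from the first (parameter-count) term of \eqref{eq:expdim} and simplify. Your intermediate expression $2m^2m_{12}^2 - mm_{12}(k_1+k_2) - m_{12}^2 + 1$ and the final grouping into $(m-2)k_1k_2+(m-1)(m_{12}^2-1)$ both check out.
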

\begin{proof}
Subtract the dimension computed in \autoref{thm: triangular subcritical} from the first term in the formula for the expected dimension in \eqref{eq:expdim}.
\end{proof}

\section{Equations for tensor network triangles and coincident root loci}\label{sec:4}

In this section, we study the equations of the varieties $\calTNS^\Delta_{\bfm,\bfn}$ in the case $\bfn = (2,n_1,n_2)$ and we prove \autoref{main theorem}, \autoref{Part3}. 

Let $\dim V_0 = 2$ and let $S^d V_0$ be the space of homogeneous polynomials of degree $d$ in two variables $v_0,v_1$. Let $\lambda = (\lambda_1 \vvirg \lambda_s)$ be a partition of $d$, that is a sequence of nonnegative integers with $\lambda_i \geq \lambda_{i+1}$ and $\sum_{i=1}^s \lambda_i = d$. The \emph{coincident root locus} $\calC_\lambda \subseteq S^d V_0$ is the variety of homogeneous polynomials of degree $d$ in two variables whose linear factors appear with multiplicities according to the partition $\lambda$:
\[
\calC_\lambda = \{ \ell_1^{\lambda_1} \cdots \ell_s^{\lambda_s} : \ell_j \in V_0 \} \subseteq S^d V_0.
\]
Coincident root loci have been studied classically, but their equations are known only in few cases \cite{HilbPowers, Ch04, AbCh07}. However, one can realize a system of equations of $\calTNS^\Delta_{\bfm,\bfn}$ as pull-back of equations of some specific coincident root loci $\calC_\lambda$. 

More precisely, there is a collection of partitions $\Lambda$ and polynomial maps (and in fact a collection of such) $\phi_\lambda : V_0 \otimes V_1 \otimes V_2 \to S^{d_\lambda} V_0$ for $\lambda \in \Lambda$ such that, if $T \in \calTNS^\Delta_{\bfm,\bfn}$ then $\phi_\lambda(T) \in \calC_\lambda$ for every $\lambda \in \Lambda$ and if $T$ is generic in $V_0 \otimes V_1 \otimes V_2$ then $\phi_\lambda(T) \notin \calC_\lambda$ for any $\lambda$. In particular, since each coefficient of the polynomial $\phi_\lambda(T)$ is a polynomial in the entries of $T$, the equations of $\calC_\lambda$, evaluated at $\phi_\lambda(T)$, give (some) equations for $\calTNS^\Delta_{\bfm,\bfn}$. This is proved in the following result.

\begin{theorem}\label{thm: equations from coincident root loci}
Let $\bfn = (2,n_1,n_2)$ and $\bfm = (m_{01} , m_{12}, m_{02})$ be an assignment of local and physical dimensions of the triangular network $\Delta$, with $n_i \leq m_{ij}m_{ik}$ with $\{i,j,k\} = \{0,1,2\}$. Write $k_i = m_{ij}m_{ik} - n_i$ for $i = 1,2$ and suppose $k_2 \leq k_1$. If 
\begin{equation}
    \tag{\raisebox{0.3mm}{\textreferencemark}}
m_{01} = m_{02} \quad \text{ and } \quad  k_2 \leq k_1 < m_{12} 
\end{equation}
set $m=m_{01} = m_{02}$. Let $\kappa = k_1 \vvirg m_{12}-2$. A system of equations of $\calTNS^\Delta_{\bfm,\bfn}$ is realized as pull-back of the equations of $\calC_{\lambda(\kappa)}$ where  
\[
\lambda(\kappa) = (\underbrace{m_{12} - \kappa  \vvirg m_{12} - \kappa }_{m \text{ times} }, \underbrace{1 \vvirg 1}_{(m-1)\kappa \text{ times}}).
\]
  
\end{theorem}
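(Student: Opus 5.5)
The equations will come from determinants of square restrictions of the pencil. For each $\kappa \in \{k_1 \vvirg m_{12}-2\}$, set $d_\kappa = m m_{12} - \kappa$; note that $d_\kappa \leq n_1 \leq n_2$, and that $|\lambda(\kappa)| = m(m_{12}-\kappa) + (m-1)\kappa = d_\kappa$. For linear maps $P \in \Hom(V_1, \bbC^{d_\kappa})$ and $Q \in \Hom(\bbC^{d_\kappa}, V_2)$, define
\[
\phi_{\kappa,P,Q}(T) = \det\bigl(P \, T \, Q\bigr) \in S^{d_\kappa} V_0,
\]
where $T$ is regarded as an $n_1 \times n_2$ matrix of linear forms in $v_0, v_1$. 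Each coefficient of $\phi_{\kappa,P,Q}(T)$ is a polynomial in the entries of $T$. So for every fixed $(P,Q)$, pulling back the equations of $\calC_{\lambda(\kappa)}$ along $\phi_{\kappa,P,Q}$ gives polynomials on $V_0 \otimes V_1 \otimes V_2$. The plan is to show two things: these pulled-back polynomials vanish on $\calTNS^\Delta_{\bfm,\bfn}$, and they do not all vanish at a generic tensor.

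\emph{Vanishing.} Since $\calC_{\lambda(\kappa)}$ is closed and $\phi_{\kappa,P,Q}$ is continuous, it suffices to check membership on a dense subset. By \autoref{thm: triangular subcritical}, I can take $T = T_1 \boxplus T_2$ in the normal form, up to the action of $\GL(V_1)\times\GL(V_2)$; this action can be absorbed into $P$ and $Q$. Let $\ell_i \in V_0$ be the linear form vanishing at the root $[-\zeta_i : 1]$ of $z_i = v_0 + \zeta_i v_1$. Evaluated at that point, the block $J_1(z_i)\boxtimes \bfI_{m_{12}-k_1}$ of $T_1$ vanishes, so
\[
\rank T(\text{root of } z_i) \leq n_1 - (m_{12}-k_1).
\]
Hence the $d_\kappa \times d_\kappa$ pencil $PTQ$ has corank at least
\[
d_\kappa - n_1 + m_{12} - k_1 = m_{12} - \kappa
\]
at that point. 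I then use the standard fact that if a square pencil has corank $\geq c$ at a point, its determinant vanishes there to order $\geq c$. This follows by choosing bases adapted to the kernel: $c$ columns become divisible by $\ell_i$. It is the one point needing a short justification. Since the $\zeta_i$ are distinct, it gives
\[
\det(PTQ) = \ell_1^{m_{12}-\kappa}\cdots \ell_m^{m_{12}-\kappa}\cdot g, \qquad \deg g = (m-1)\kappa,
\]
and factoring $g$ into linear forms shows $\det(PTQ) \in \calC_{\lambda(\kappa)}$. The case where $\det(PTQ)$ vanishes identically lies in $\calC_{\lambda(\kappa)}$ trivially.

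\emph{Non-vanishing at a generic tensor.} For $T$ generic in $V_0\otimes V_1\otimes V_2$ and generic $P,Q$, the pencil $PTQ$ is a generic $d_\kappa \times d_\kappa$ pencil, since every square pencil of that size arises in this way. By the Kronecker normal form recalled in \autoref{sec: pencils}, its determinant has $d_\kappa$ distinct roots. It therefore lies outside $\calC_{\lambda(\kappa)}$, because the hypothesis $\kappa \leq m_{12}-2$ forces the part $m_{12}-\kappa \geq 2$. This is exactly why the range of $\kappa$ stops at $m_{12}-2$: at $\kappa = m_{12}-1$ the partition is $(1^{d_\kappa})$ and the condition is empty. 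The lower end $\kappa \geq k_1$ is what guarantees $d_\kappa \leq n_1$, so that square $d_\kappa\times d_\kappa$ restrictions exist at all.

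The main obstacle I expect is the multiplicity step, namely that corank $\geq c$ at a root forces a root of multiplicity $\geq c$ of the determinant, together with checking that genericity of $P,Q$ does not destroy distinctness of the $\ell_i$. Both reduce to elementary linear algebra on the normal form.
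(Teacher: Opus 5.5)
Your proposal is correct. It uses the same family of equations as the paper, namely determinants of all $d_\kappa\times d_\kappa$ restrictions $PTQ$, and the same non-membership argument for a generic tensor. The difference lies in how you check that $\det(PTQ)\in\calC_{\lambda(\kappa)}$ on the variety.

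The paper first uses the fact that restriction maps $\calTNS^\Delta_{\bfm,(2,n_1,n_2)}$ into the square variety $\calTNS^\Delta_{\bfm,(2,d_\kappa,d_\kappa)}$. It then applies \autoref{thm: triangular subcritical} a second time, with $k_1=k_2=\kappa$, and reads off the exact factorization of the determinant of a generic element: $m$ roots of multiplicity exactly $m_{12}-\kappa$, and $(m-1)\kappa$ simple roots.

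You instead use the normal form only at the original parameters $(k_1,k_2)$. The bound $\rank T(\alpha_i)\le n_1-(m_{12}-k_1)$ at the root of $z_i$ passes to $PTQ$ and gives corank at least $d_\kappa-n_1+m_{12}-k_1=m_{12}-\kappa$. Your column-divisibility argument then turns this into root multiplicity at least $m_{12}-\kappa$; that step is correct as written.

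Your route avoids the second appeal to the normal form theorem and the restriction-closure property of tensor network varieties, and it makes the exponent $m_{12}-\kappa$ transparent. It only gives lower bounds on multiplicities. That is enough, because $\calC_\lambda$ allows coincident factors. The paper's argument additionally shows that $\lambda(\kappa)$ is exactly the multiplicity pattern realized by a generic element.

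The concern you raise about genericity of $P,Q$ destroying distinctness of the $\ell_i$ is not an issue. The $\ell_i$ are determined by $T$ alone.
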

\begin{proof}
    Let $V_1', V_2'$ be vector spaces of dimension $n' =  m m_{12} - \kappa$. If $T \in \calTNS^{\Delta}_{\bfm, (2,n_1,n_2)}$, and $X_i \in \Hom(V_i,V_i')$, then by construction 
    \[
   (X_1 \otimes X_2) \cdot T \in \calTNS^{\Delta}_{\bfm, (2,n',n')}.
    \]
    In particular, equations for $\calTNS^{\Delta}_{\bfm, (2,n',n')}$ pull back to equations for $\calTNS^{\Delta}_{\bfm, (2,n_1,n_2)}$. We determine equations of $\calTNS^{\Delta}_{\bfm, (2,n',n')}$ as pull back of equations of $\calC_{\lambda(\kappa)}$. Note $n' = m(m_{12} - \kappa) + (m-1)\kappa$ equals the length of the partition $\lambda(\kappa)$.

Consider the map
    \[
   \phi_{\lambda(\kappa)} : V_0 \otimes V_1' \otimes V_2' \to S^{n'} V_0
    \]
    defined by sending $T \in V_0 \otimes V_1' \otimes V_2'$ to the determinant of the associated $n' \times n'$ matrix of linear forms in a fixed basis of $V_0$. By \autoref{thm: triangular subcritical}, if $T \in \calTNS^{\Delta}_{\bfm, (2,n',n')}$ is generic then $\phi_{\lambda(\kappa)}(T)$ has the $m$ factors $z_i = v_0 + \zeta_i v_1$ occurring with multiplicity $m_{12}-\kappa$ and $(m-1)\kappa$ factors occurring with multiplicity $1$. Therefore $\phi_{\lambda(\kappa)}(T) \in \calC_{\lambda(\kappa)}$ for every $T \in \calTNS^{\Delta}_{\bfm, (2,n',n')}$.

    If $T \in V_0\otimes V_1' \otimes V_2'$ is generic, then $\phi_{\lambda(\kappa)}(T)$ is a generic binary form of degree $n'$, hence $\phi_{\lambda(\kappa)}(T) \notin \calC_\lambda$.
\end{proof}
The equations of \autoref{thm: equations from coincident root loci} do not define $\calTNS^{\Delta}_{\bfm,\bfn}$ set-theoretically. They simply cut out the variety of pencils $T \in V_0 \otimes V_1 \otimes V_2$ whose generalized eigenvalues have the same multiplicities as the ones of tensors in $\calTNS^{\Delta}_{\bfm,\bfn}$. In particular, they do not detect the Jordan structure of the blocks $J_1(z_i)^{\boxplus m}$ and they vanish on any pencil where the block $J_1(z_i) ^{\boxplus m}$ is replaced by some other sum of Jordan blocks, for instance the single Jordan block $J_m(z_i)$. 

More refined equations can be obtained translating the Kronecker structure into polynomial equations. This is implicitly done in \cite{Gant59} and in some very special cases in \cite{Dimca82,Wall78} but it is not immediate to write the corresponding equations explicitly. In some cases, the Kronecker structure of the blocks can be detected by geometric invariants, such as the \emph{collineation varieties} of \cite{GeKe}; these provide additional equations for $\calTNS^{\Delta}_{\bfm,\bfn}$, but they are not enough to cut out $\calTNS^{\Delta}_{\bfm,\bfn}$ set-theoretically either. In the case where $\bfm = (2,m,2)$, set-theoretic equations for the tensor network variety can be obtained as pull-back of certain generalizations of secant varieties; they are discussed in \autoref{sec: TNS as intersection with sigma}.

We point out that in the special case $n_2 = n_1+1$, \autoref{thm: triangular subcritical} implies that the pencils $T \in \calTNS^{\Delta}_{\bfm,\bfn}$ have a non-trivial block structure, whereas from the discussion of \autoref{sec: pencils} the generic pencil in $V_0 \otimes V_1 \otimes V_2$ consists of a single indecomposable singular block $L_{n_1}$. In this case, \cite[Cor. 10.7.3]{DerWey} guarantees that the ring of invariant polynomials $\bbC[V_0 \otimes V_1 \otimes V_2]^{\SL(V_0) \times \SL(V_1) \times \SL(V_2)}$  has a single generator, whose zero set is the complement of the orbit of the generic element. In particular, such invariant vanishes on $\calTNS^{\Delta}_{\bfm,\bfn}$. This fact was recently observed in \cite{vdBCLNZ25}, where it was translated into the non-membership of some specific points in the moment polytope of the graph tensor $T(\Delta,\bfm)$. \autoref{thm: triangular subcritical} recovers this result providing a geometric interpretation of the invariant.

In fact, we expect the construction of \emph{Schofield invariants} for pencils of matrices, as described in \cite[Ch. 10]{DerWey}, and more explicitly for pencils in \cite{Schofield, SchvdB}, would provide additional interesting modules of equations. We illustrate this in the following remark.

\begin{remark}\label{remark: schofield 234}
Consider the case $\bfm = (2,2,2)$ and $(n_1,n_2) = (3,4)$. Then $\calTNS^\Delta_{\bfm,\bfn} \subseteq V_0 \otimes V_1 \otimes V_2$ is a subvariety of codimension $2$. In this case the equations of \autoref{thm: equations from coincident root loci} are trivial, because they rely on the restrictions of $\calTNS^\Delta_{\bfm,(2,3,4)}$ to $\calTNS^\Delta_{\bfm, (2,3,3)}$ which coincides with the ambient space. 

Fix an integer $q$ and let $U_1, U_2$ be vector spaces with $\dim U_1 = 4q$ and $\dim U_2 = 3q$. Let $S \in V_0^* \otimes U_1 \otimes U_2$ be a generic element. For every $T \in V_0 \otimes V_1 \otimes V_2$, let $S \contract T \in U_1 \otimes U_2 \otimes V_1 \otimes V_2$, which defines a linear map $F(S,T): U_1^* \otimes V_1^* \to U_2 \otimes V_2$, represented by a square matrix of size $12q$; this is an instance of the \emph{bridge map} studied in \cite{GLS24}. If $T$ is generic, this map is full rank; in particular, the determinant of the map $F(S,T)$ is a polynomial in the coefficients of $T$, invariant under the action of $\SL(V_0) \times \SL(V_1) \times \SL(V_2)$, called \emph{Schofield invariant}; then \cite[Thm. 2.3]{SchvdB} guarantees that this construction yields a system of generators for the ring of invariants on $V_0 \otimes V_1 \otimes V_2$; in the case $(n_1,n_2) = (3,4)$ it turns out that this ring has a single generator, arising as determinant of the $12 \times 12$ matrix obtained by any sufficiently general $S \in V_0^* \otimes U_1 \otimes U_2$ in the case $q=1$. 

If $T \in \calTNS^\Delta_{\bfm,\bfn}$, then by \autoref{thm: triangle mm1}, $T$ is unstable, hence all Schofield invariants vanish. But in fact, a direct computation shows that, for generic $S \in V_0^* \otimes U_1 \otimes U_2$ with $\dim U_1 = 4$, $\dim U_2 = 3$, the map $F(S,T) : U_1^* \otimes V_1^* \to U_2 \otimes V_2$ has rank $10$. In particular, the $11 \times 11$ minors of $F(S,T)$, for every fixed $S$, define a system of equations for $\calTNS^\Delta_{\bfm,\bfn}$. Computational experiments suggest that these equations set-theoretically define $\calTNS^\Delta_{\bfm,\bfn}$, but the size of the problem is already beyond reach for a symbolic proof. 
\end{remark}

In the next section, we provide a geometric interpretation for membership in $\calTNS^{\Delta}_{\bfm,\bfn}$, which can, in principle, be translated into polynomial equations.

\section{Geometry of the flattening image}\label{sec:5}

In this section we discuss a geometric interpretation for a tensor $T$ to belong to $\calTNS^\Delta_{\bfm,\bfn}$ for small values of $\bfm$ and $\bfn$: we focus on the pencil cases discussed in \autoref{section: pencils} and on the case $\bfm = (2,2,2)$, $\bfn = (3,3,3)$. In these cases, the membership in $\calTNS^\Delta_{\bfm,\bfn}$ can be interpreted in terms of special intersection properties of the image of the flattening map $T: V_0^* \to V_1 \otimes V_2$ with suitable subvarieties of $V_1 \otimes V_2$.

\subsection{Pencils and a geometric interpretation of \autoref{main theorem}}\label{sec: TNS as intersection with sigma}
A characterization for the membership of $T$ in $\calTNS^{\Delta}_{\bfm,\bfn}$ in the case of bond dimensions $(2,2,2)$ and physical dimensions $(2, 4,4)$ was given in \cite[Theorem 5.2]{BDG23}. We provide an upgrade of this construction in \autoref{thm: triangles with 222 corner}, in the setting of \autoref{222corner:geo:int}, and a more general version in \autoref{thm: triangle and Z variety}. Let $\sigma_{r}^{n_1 \times n_2} \subseteq \bbP ( \bbC^{n_1} \otimes \bbC^{n_2})$ be the variety of matrices of rank at most $r$. 

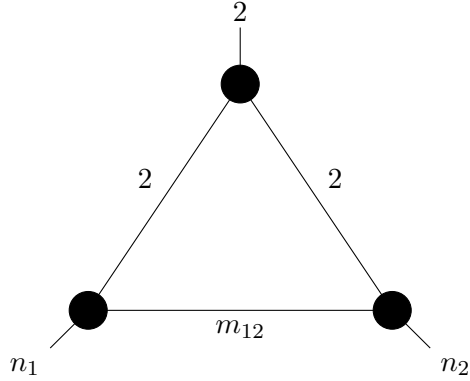
\begin{figure}[!h]
\centering
\begin{tikzpicture}[scale=.5]
\draw (0,0)--(4,6);
\draw (0,0)--(8,0);
\draw (8,0)--(4,6);
\node () at (1.5,3.5) {$2$};
\node () at (6.5,3.5) {$2$};
\node () at (4,-0.5) {$m_{12}$};
\draw (0,0)--(-1,-1);
\draw (4,6)--(4,7.5);
\draw (8,0)--(9,-1);
\draw[fill=black] (0,0) circle (.5cm);
\draw[fill=black] (4,6) circle (.5cm);
\draw[fill=black] (8,0) circle (.5cm);
\node[anchor = north east] at (-1,-1){$n_1$};
\node[anchor = north west] at (9,-1){$n_2$};
\node[anchor = south] at (4,7.4){$2$};
\node[anchor = north east] at (-1,-1){\phantom{$\bbC^{n_0}$}};
\node[anchor = north west] at (9,-1){\phantom{$\bbC^{n_1}$}};
\node[anchor = south] at (4,7.4){\phantom{$\bbC^{n_2}$}};
\node at (0,0){$0$};
\node at (4,6){$1$};
\node at (8,0){$2$};
\node at (-2,2){\phantom{$T = $}};
\end{tikzpicture}
\caption{Triangular graph with bond dimensions $\mathbf{m} = (2,m_{12},2)$ and $\mathbf{n} = (2,n_{1},n_2)$.}\label{222corner:geo:int}
\end{figure}

\begin{theorem}\label{thm: triangles with 222 corner}
Let $\bfn = (2,n_1,n_2)$ and $\bfm = (2 , m_{12}, 2)$ be an assignment of local and physical dimensions of the triangular network $\Delta$. Then
 \[
  \calTNS^\Delta_{\bfm,\bfn} = \bar{ \left\{ T \in V_0 \otimes V_1 \otimes V_2 : \begin{array}{l} \bbP ( \im (T : V_0^* \to V_1 \otimes V_2) )\text{ is a line intersecting} \\ \sigma_{m_{12}}^{n_1 \times n_2} \text{ in at least two points}\end{array} \right\}} .
 \]
\end{theorem}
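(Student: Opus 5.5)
Write $Z$ for the set on the right-hand side, before closure. I will prove the two inclusions separately, using the normal form of \autoref{thm: triangle mm1} and \autoref{thm: triangular subcritical} with $m=2$.

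\textbf{Inclusion $\calTNS^\Delta_{\bfm,\bfn}\subseteq \bar{Z}$.} By \autoref{thm: different bond dimensions}'s reduction argument, restricting to subcritical sizes commutes with both sides. Indeed, if $T=(X_1\otimes X_2)T'$ and $\im T'$ meets $\sigma_{m_{12}}$ at two points, then applying $X_1\otimes X_2$ does not increase rank, so $\im T$ also meets $\sigma_{m_{12}}$ at two points, unless it degenerates to a point; that case is handled by taking closure.

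It therefore suffices to treat the critical case $n_1=n_2=2m_{12}$. Here the generic element is
\[
T=J_1(z_1)\boxtimes\bfI_{m_{12}}\boxplus J_1(z_2)\boxtimes\bfI_{m_{12}}.
\]
The line $\bbP(\im T)$ consists of the matrices $\mathrm{diag}(\ell(z_1)\bfI,\ell(z_2)\bfI)$ as $\ell$ ranges over $V_0^*$. The two points $\ell\in z_1^\perp$ and $\ell\in z_2^\perp$ give matrices of rank $m_{12}$, so $T\in Z$. Taking closures gives the inclusion.

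\textbf{Inclusion $Z\subseteq\calTNS^\Delta_{\bfm,\bfn}$.} Let $T\in Z$ be concise on $V_0$, with $\im T=\langle A,B\rangle$ where $\rank A,\rank B\le m_{12}$. I would write $A=\sum_{i=1}^{m_{12}} a_i\otimes a_i'$ and $B=\sum_{i=1}^{m_{12}} b_i\otimes b_i'$. Then $T=v_0\otimes A+v_1\otimes B$ is exhibited directly by local tensors:
\begin{itemize}
\item $X_0=v_0\otimes e_1\otimes e_1^*+v_1\otimes e_2\otimes e_2^*$, i.e.\ the pencil $\mathrm{diag}(v_0,v_1)\in\Hom(W_0,V_0)$;
\item $X_1\colon E_{01}\otimes E_{12}\to V_1$ sending $e_1\otimes f_i\mapsto a_i$ and $e_2\otimes f_i\mapsto b_i$;
\item $X_2$ defined analogously with $a_i',b_i'$.
\end{itemize}
Contracting against $T(\Delta,\bfm)$ gives exactly $v_0\otimes A+v_1\otimes B$. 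This is a direct check using the definition of $\Phi$, and there is no subcriticality issue because $n_i$ is arbitrary. So $Z\subseteq\image\Phi$, and hence $\bar Z\subseteq\calTNS^\Delta_{\bfm,\bfn}$.

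\textbf{Expected obstacle.} The delicate point is bookkeeping in the first inclusion: points of $\image\Phi$ whose flattening image is a line meeting $\sigma_{m_{12}}$ only once, or tangentially, or whose image has dimension one. I would not handle these pointwise. Instead I would note that the conditions are closed once closure is taken. Concretely, the same explicit parametrization shows that a general element of $\image\Phi$ has the form $v_0\otimes A+v_1\otimes B$ with $X_0$ generic, equivalent to $\mathrm{diag}(z_1,z_2)$. This places a dense subset of $\image\Phi$ inside $Z$, avoiding the reduction-to-critical step entirely. That yields equality of closures, since both sides are irreducible closures of images of the same parameter space.
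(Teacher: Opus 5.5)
Your proof is correct, and for the harder inclusion it takes a genuinely different route from the paper.

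\textbf{How the paper argues.} It gets $\calTNS^\Delta_{\bfm,\bfn}\subseteq\calZ$ from the subcritical normal form of \autoref{thm: triangular subcritical} with $m=2$: it evaluates $J_1(z_1)\boxtimes\bfI_{m_{12}-k_1}\boxplus J_1(z_2)\boxtimes\bfI_{m_{12}-k_1}\boxplus T_2$ at the two roots, where the generic block $T_2$ contributes rank at most $k_1$. The reverse inclusion is obtained only by a dimension count. It compares the formula of \autoref{thm: triangular subcritical} with $\dim\calZ$ from \cite[Lemma 5.1]{BDG23}, which tacitly also uses that $\calZ$ is irreducible.

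\textbf{How you argue.} You observe that with $X_0=\mathrm{diag}(v_0,v_1)$ the map $\Phi$ already hits every $v_0\otimes A+v_1\otimes B$ with $\rank A,\rank B\le m_{12}$. So $Z\subseteq\image\Phi$ on the nose. Gauge-fixing a generic $X_0$ to $\mathrm{diag}(z_1,z_2)$ then shows that a dense subset of $\image\Phi$ lies in $Z$.

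\textbf{What each route buys.} Yours is entirely elementary. It needs no Kronecker theory, no input from \cite{Pok86} and no dimension formula. It also does not need the hypotheses (subcriticality and \eqref{eqn: nontriviality condition}) that the paper's appeal to the normal form implicitly requires. Combined with \cite[Lemma 5.1]{BDG23}, it would re-derive the $m=2$ dimension formula rather than depend on it. The paper's route gives finer information, namely the block $T_2$ and the full Kronecker structure. It is also the template used for \autoref{thm: triangle and Z variety}. Your argument exploits the fact that for $m=2$ the two generalized eigenvalues $z_1,z_2$ form a basis of $V_0$. Each rank-deficient point of the line is then exactly one of the two summands, and that converse has no equally direct analogue for $m\ge 3$.

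\textbf{Minor remarks.} The reduction-to-critical paragraph, with its vague ``handled by taking closure'', is superseded by your final paragraph, which is the cleaner argument. Your closing appeal to irreducibility is unnecessary: the two inclusions $\bar Z\subseteq\calTNS^\Delta_{\bfm,\bfn}\subseteq\bar Z$ already give equality.
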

\begin{proof}
Let 
\[
\calZ_{m_{12}}^{n_1 \times n_2} = \bar{ \left\{ T \in V_0 \otimes V_1 \otimes V_2 : \begin{array}{l} \bbP (\im (T : V_0^* \to V_1 \otimes V_2)) \text{ is a line intersecting} \\ \sigma_{m_{12}}^{n_1 \times n_2} \text{ in at least two points}\end{array} \right\}}.
\]
We are going to prove $\calTNS^\Delta_{\bfm,\bfn} = \calZ_{m_{12}}^{n_1 \times n_2}$.

By \autoref{thm: triangular subcritical}, generic elements of $\calTNS^\Delta_{\bfm,\bfn}$ can be normalized to be of the form
\[
T = ( J_1(v_0 + \zeta_1 v_1) \boxtimes \bfI_{m_{12} - k_1} ) \boxplus (J_1(v_0 + \zeta_2 v_1) \boxtimes \bfI_{m_{12} - k_1} ) \boxplus T_2
\]
with $T_2$ generic in $\bbC^2 \otimes \bbC^{k_1} \otimes \bbC^{2k_1 - k_2}$.

Let $\alpha \in V_0^*$ be defined by $\alpha_i(v_1) = -1$ and $\alpha_i(v_0) = \zeta_i$. Then $T(\alpha_i) = \bfI_{m_{12} - k_1} + T_2(\alpha_i)$ which is a matrix of rank at most $m_{12} - k_1 + k_1 = m_{12}$ because $k_2 \leq k_1$. This shows that the projective line $\bbP (\im (T : V_0^* \to V_1 \otimes V_2)) $ intersects $\sigma_{m_{12}}^{n_1 \times n_2}$ in at least two points, namely the two matrices $T(\alpha_0),T(\alpha_1)$. We deduce that $\calTNS^\Delta_{\bfm,\bfn} \subseteq \calZ_{m_{12}}^{n_1 \times n_2}$.

The other inclusion follows by a dimension argument. By \cite[Lemma 5.1]{BDG23} 
\[
\dim \calZ_{m_{12}}^{n_1 \times n_2} =  2 m_{12} (n_1 + n_2 - m_{12}) + 2 = 6m_{12}^2 - 2m_{12} k _1 - 2m_{12}k_2 + 2;
\]
we point out that the dimension recorded in \cite[Lemma 5.1]{BDG23} considers the tensor network variety in projective space, whereas in this paper we consider its affine cone in $V_0 \otimes V_1 \otimes V_2$. By \autoref{thm: triangular subcritical}, this dimension is the same as $\calTNS^\Delta_{\bfm,\bfn}$.
\end{proof}

\autoref{thm: triangles with 222 corner} yields a complete system of set-theoretic equations for $\calTNS^\Delta_{\bfm,\bfn}$ as pull-back of the equations of the Grassmann secant variety of $\sigma_{m_{12}}^{n_1 \times n_2}$ in $\Gr(2, V_1 \otimes V_2)$, in the sense of \cite{BalBerCatChi}. They are, however, not easy to write down explicitly. It would be interesting to determine such conditions in terms for the Schofield matrices described in \autoref{remark: schofield 234}. A similar result is given in \autoref{thm: triangle and Z variety} for the more general setting of \autoref{thm: triangular subcritical}.

\begin{theorem} \label{thm: triangle and Z variety}
 Let $\Delta$ be the triangle graph with bond dimensions $\bfm = (m,m_{12},m)$, and local dimensions $\bfn = (2,n_1, n_2)$ with $n_1 = m_{12}m-k_1$ and $n_2 = m_{12}m-k_2$. Then 
 \[
  \calTNS^\Delta_{\bfm,\bfn} = \bar{ \left\{ T \in \bbP ( V_0 \otimes V_1 \otimes V_2) : \begin{array}{l} \bbP ( \im (T : V_0^* \to V_1 \otimes V_2)) \text{ is a line intersecting} \\ \sigma_{m_{12}(m-1)}^{n_1 \times n_2} \text{ in at least $m$ distinct points}\end{array} \right\}} .
 \]
\end{theorem}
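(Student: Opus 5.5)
The plan is to prove the two inclusions separately, as in \autoref{thm: triangles with 222 corner}. Write $r = m_{12}(m-1)$ and denote the right-hand side by $\calZ$. Note that $n_1 - r = m_{12} - k_1$, which is exactly the number of copies of each $J_1(z_i)$ in the normal form of \autoref{thm: triangular subcritical}.

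\emph{Inclusion $\calTNS^\Delta_{\bfm,\bfn} \subseteq \calZ$.} By \autoref{thm: triangular subcritical} it suffices to treat $T = T_1 \boxplus T_2$ in normal form with $\zeta_1 \vvirg \zeta_m$ distinct. Let $\alpha_i \in V_0^*$ be the functional vanishing on $z_i = v_0 + \zeta_i v_1$.
\begin{itemize}
\item In $T(\alpha_i)$, the block $J_1(z_i) \boxtimes \bfI_{m_{12}-k_1}$ vanishes.
\item The other $m-1$ blocks contribute rank $(m-1)(m_{12}-k_1)$.
\item The block $T_2(\alpha_i)$ has rank at most $\dim V_1'' = (m-1)k_1$.
\end{itemize}
So $\rank T(\alpha_i) \le (m-1)m_{12} = r$. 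The points $[T(\alpha_1)] \vvirg [T(\alpha_m)]$ are $m$ distinct points of the line $\bbP(\im T)$, because $T$ is concise on $V_0$ and the $\alpha_i$ are pairwise non-proportional. Hence $T \in \calZ$, and taking closures gives the inclusion.

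\emph{Inclusion $\calZ \subseteq \calTNS^\Delta_{\bfm,\bfn}$.} A dimension count as in the case $m=2$ is not available here: we have no analogue of \cite[Lemma 5.1]{BDG23}, and $\calZ$ need not be irreducible. I would therefore argue through Kronecker invariants. Let $T$ be a point in the open dense subset of $\calZ$ where $T$ is concise and the $m$ points are distinct, say $T(\alpha_i)$ with $\rank T(\alpha_i) \le r$. Three facts follow.
\begin{itemize}
\item Since $n_1 \le n_2$ and $\rank T(\alpha_i) < n_1$, the pencil is not of generic rank $n_1$ everywhere on these points. If the generic rank of $T$ is $n_1$, each $\alpha_i$ is a generalized eigenvalue, and the corank $n_1 - \rank T(\alpha_i) \ge m_{12}-k_1$ equals the number of Jordan blocks at $z_i$.
\item The conciseness gives $r_0(T) = k_1$ and $\ell_0(T) = k_2$ when $T$ is regarded in $V_0 \otimes W_1 \otimes W_2$.
\item So $T$ has, for each $i$, at least $m_{12}-k_1$ Jordan blocks of arbitrary sizes at $z_i$, and the remaining Kronecker blocks are arbitrary.
\end{itemize}
The remaining case, generic rank $< n_1$, is a closed condition. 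It is handled by showing that such pencils are limits of pencils of full generic rank that still satisfy the condition, by perturbing the extra right singular blocks while keeping the $m$ eigenvalues. I would do this with \cite[Thm. 3]{Pok86}.

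The key step is to show that such a $T$ is a degeneration of $B(\zeta_1 \vvirg \zeta_m) = \bigboxplus_i J_1(z_i)\boxtimes \bfI_{m_{12}-k_1} \boxplus T_2$, with $T_2$ generic, or of the critical $A(\zeta_1 \vvirg \zeta_m)$. The most direct route is to verify the inequalities (8)--(10) of \cite[Thm. 3]{Pok86} for the pair $A(\zeta_1 \vvirg \zeta_m) \to T$ in $V_0 \otimes W_1 \otimes W_2$.
\begin{itemize}
\item Since $A$ has no singular blocks, (8) and (9) hold trivially.
\item Inequality (10) at the eigenvalue $z_i$ reads $\sum_{p \ge j}$-type rank conditions. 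The case $j = 1$ is $m_{12} \le k_1 + d_{\ge1,\zeta_i}(T)$, and this is exactly our corank hypothesis.
\item For $j \ge 2$ the left side is $0$ for $A$, since all its blocks have size $1$, so those instances are automatic.
\item At eigenvalues other than the $z_i$, $A$ has no Jordan blocks, so there is nothing to check.
\end{itemize}
This gives $T \in \bar{\GL(W_1)\times\GL(W_2)\cdot A} \subseteq \calTNS^\Delta_{\bfm,\bfn}$, because \autoref{thm: triangle mm1} identifies $A$ as a point of the critical variety and restriction to $V_1, V_2$ preserves membership.

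The main obstacle is checking inequality (10) precisely in Pokrzywa's formulation, where it involves the counts of Jordan blocks of size $\ge j$ together with the number of right singular blocks. It must hold for every $j$, including eigenvalues of $T$ not among the $z_i$, where $A$ contributes nothing. I would first write out (10) explicitly in the form used in Step 1 of \autoref{thm: triangular subcritical}, and check that only the count of blocks at each $z_i$, and not their sizes, enters for $j=1$. As a fallback, if the degeneration criterion is awkward for non-reduced Jordan structure, I would first deform $T$ within $\calZ$: split each $J_p(z_i)$ into $J_1(z_i) \boxplus J_{p-1}(z')$ with $z'$ generic. This is possible because such splittings are themselves degenerations in the opposite direction. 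After the splitting the Jordan part at each $z_i$ is semisimple, and the Step 2 argument of \autoref{thm: triangular subcritical} then applies verbatim.
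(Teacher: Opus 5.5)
Your argument is correct. The inclusion $\calTNS^\Delta_{\bfm,\bfn}\subseteq\calZ$ is exactly the paper's, but for the reverse inclusion you take a more direct route.

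The paper passes to the concise part of $T$. It uses \autoref{lemma: pencils with rank drops} to turn the $m$ rank drops into at least $m_{12}-k_1$ Jordan blocks at each of $m$ distinct eigenvalues, and then appeals to the normal form to assert that such pencils are degenerations of critical ones. You instead check \cite[Thm.~3]{Pok86} for $A(\zeta_1\vvirg\zeta_m)\to T$ directly in $V_0\otimes W_1\otimes W_2$. The key fact is that the corank of $T(\alpha_i)$, as an $mm_{12}\times mm_{12}$ matrix, equals $r(T)$ plus the number of Jordan blocks at $z_i$. This is the right-hand side of (10) at $j=1$.

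Your route does not care where the rank drop comes from, and this matters. The rank-drop lemma counts only Jordan blocks, but right singular blocks $R_p$ with $p\geq 1$ also lower the rank below $n_1$ at every point. For instance, $L_1\boxplus R_1$ is concise, has rank $n_1-1$ everywhere, and has no Jordan blocks. Your route also makes explicit the degeneration step that the paper leaves implicit. The paper's route, where it applies, gives finer information on the Jordan part.

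You can streamline your write-up. Put $r(T)$, the total number of right singular blocks of $T$ in $V_0\otimes W_1\otimes W_2$, in place of $k_1$. Then the $j=1$ inequality becomes literally the corank hypothesis, for every $T$ in the defining set, concise or not, of full generic rank or not. This makes three of your steps superfluous: the reduction to concise points (which would need justification anyway, since $\calZ$ may be reducible), the perturbation argument for generic rank below $n_1$, and the fallback splitting of Jordan blocks.

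One small correction: in the cumulative form of (10) used in Step 1 of \autoref{thm: triangular subcritical}, the left side stays $m_{12}$ for $j\geq 2$ rather than becoming $0$. Those instances are still automatic, because the right side is nondecreasing in $j$.
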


\begin{proof}
As in \autoref{thm: triangular subcritical}, without loss of generality assume $k_2 \leq k_1$ so that $n_1 \leq n_2$. Let  
\[
\mathcal{Z}=\overline{
\left\{
T\in\mathbb P(V_0\otimes V_1\otimes V_2):
\begin{array}{l}
\mathbb P(\mathrm{im}(T:V_0^*\to V_1\otimes V_2))\ \text{is a line intersecting}\\
\sigma_{m_{12}(m-1)}^{n_1\times n_2}\ \text{in at least $m$ distinct points}
\end{array}
\right\}
}.
\]
We are going to prove $\calTNS^\Delta_{\bfm,\bfn} = \mathcal{Z}$. 

The inclusion $\calTNS^\Delta_{\bfm,\bfn} \subseteq \mathcal{Z}$ follows from \autoref{thm: triangular subcritical}. Indeed, if $T$ is a generic element of $\calTNS^\Delta_{\bfm,\bfn}$, then by \autoref{thm: triangular subcritical}, we may choose coordinates so that 
\[
T = \bigboxplus_{i=1}^m (J_1(z_i))^{\boxplus (m_{12} -k_1)} \boxplus T_2
\]
where $T_2$ is generic in $\bbC^2 \otimes \bbC^{(m-1)k_1} \otimes \bbC^{(m-1)k_1 + (k_1 - k_2)}$ and $z_1 \vvirg z_m$ distinct elements of $V_0$. For every $i$, let $\alpha_i \in V_0^*$ be the unique element, up to scaling, such that $\alpha_i( v_0 +\zeta_i v_1) = 0$. Then 
\[
T(\alpha_j) = [\textstyle \bigoplus_{ \substack{ i = 1 \vvirg m \\ j \neq i}}  \lambda_i \bfI_{m_{12} - k_1}] \oplus T_2(\alpha_j),
\]
where $\lambda_i = \alpha_j(z_i)$ are nonzero scalars.

By genericity of $T_2$, we have $T_2(\alpha_j)$ is a matrix of rank $(m-1)k_1$. Therefore 
\[
\rank ( T(\alpha_j)) = (m-1)(m_{12} - k_1) + (m-1)k_1 = (m-1)m_{12}.
\]
This shows that there are $m$ distinct elements $\alpha_1,\ldots,\alpha_m \in \mathbb P(V_0^*)$ such that $T(\alpha_j) \in \sigma_{m_{12}(m-1)}^{n_1\times n_2}$, which guarantees $T\in\mathcal Z$.

To prove the inclusion $\mathcal{Z} \subseteq \calTNS^\Delta_{\bfm,\bfn}$, let $T\in\mathcal Z$ satisfy the property that $\bbP (\im (T : V_0^* \to V_1 \otimes V_2))$ is a line intersecting $\sigma_{m_{12}(m-1)}^{n_1\times n_2}$ in $m$ distinct points.

We use the technical \autoref{lemma: pencils with rank drops}, proved below, to show that $T \in \calTNS^{\Delta}_{\bfm,\bfn}$. For $i = 1,2$, let $V_i' \subseteq V_i$ be subspaces such that $T$ is concise in $V_0 \otimes V_1' \otimes V_2'$. The statement that $\bbP (\im (T : V_0^* \to V_1 \otimes V_2))$ intersects $\sigma_{m_{12}(m-1)}^{n_1\times n_2}$ in at least $m$ distinct points is equivalent to the statement that $\bbP (\im (T : V_0^* \to V_1' \otimes V_2'))$ intersects $\sigma_{m_{12}(m-1) - (n_1-n_1')}^{n_1'\times n_2'}$ in at least $m$ distinct points.

The value $m_{12}(m-1) - (n_1-n_1')  = n_1' - [n_1 - m_{12}(m-1)]$ plays the role of the value $n_1 - s$ in \autoref{lemma: pencils with rank drops}. Write $T'$ for the tensor $T$ regarded as a tensor in $V_0 \otimes V_1' \otimes V_2'$. By \autoref{lemma: pencils with rank drops}, there exist distinct $z_1 \vvirg z_m$ such that $T'$ has at least $n_1 - m_{12}(m-1) = m_{12} - k_1$ Jordan blocks with eigenvalue $z_j$. Because of the normal form of \autoref{thm: triangle mm1}, such pencils are degenerations of suitable concise pencils in $\calTNS^{\Delta}_{\bfm,\bfn}$. This shows the inclusion $\mathcal{Z} \subseteq \calTNS^\Delta_{\bfm,\bfn}$ and concludes the proof.
\end{proof}
The following technical lemma is used in the proof of \autoref{thm: triangle and Z variety}.
\begin{lemma} \label{lemma: pencils with rank drops}
Let $n_1 \leq n_2$ and let $T \in V_0 \otimes V_1 \otimes V_2$ be a concise tensor such that $\bbP (\im (T : V_0^* \to V_1 \otimes V_2))$ is a line intersecting $\sigma_{n_1-s}^{n_1 \times n_2}$ in at least $p$ distinct points. Then there exist distinct $z_1 \vvirg z_p \in V_0$ such that the Kronecker decomposition of $T$ contains at least $s$ Jordan blocks with eigenvalue $z_i$.
\end{lemma}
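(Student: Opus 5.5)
The plan is to read off the rank of every matrix on the line $\bbP(\im T)$ directly from the Kronecker normal form of $T$ recalled in \autoref{sec: pencils}. We then translate the rank condition into a count of Jordan blocks.

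First, since rank is invariant under $\GL(V_1)\times\GL(V_2)$, we may assume $T$ is a block sum of indecomposable pencils $L_q$, $R_q$ and $J_q(z)$. For $\alpha \in V_0^*\setminus\{0\}$, the rank of $T(\alpha)$ is the sum of the ranks of the blocks evaluated at $\alpha$. We compute these one block at a time.
\begin{itemize}
\item $L_q(\alpha)$ is a $q\times(q+1)$ bidiagonal matrix with entries $\alpha(a_0),\alpha(a_1)$, not both zero, so it always has rank $q$.
\item Similarly, $R_q(\alpha)$ always has rank $q$.
\item $J_q(z)(\alpha)$ is upper bidiagonal with diagonal entries $\alpha(z)$ and superdiagonal entries $\alpha(a_1)$. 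It has rank $q$ if $\alpha(z)\neq 0$. If $\alpha(z)=0$, then $\alpha(a_1)\neq 0$ (since $z$ and $a_1$ span $V_0$ whenever $\alpha(z)=0$ forces this, up to the normalization $z=v_0+\zeta v_1$), so the rank is $q-1$.
\end{itemize}
Summing over blocks, the number of rows $n_1$ minus the rank of $T(\alpha)$ is
\[
n_1-\rank T(\alpha) \;=\; r(T) + d(\alpha),
\]
where $r(T)$ is the number of right singular blocks and $d(\alpha)$ is the number of Jordan blocks whose eigenvalue $z$ satisfies $\alpha(z)=0$. Conciseness rules out blocks of size $0\times 1$ and $1\times 0$, so every block above is a genuine one.

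Next we use the hypothesis. Let $[\alpha_1],\dots,[\alpha_p]$ be distinct points of $\bbP V_0^*$ with $T(\alpha_i)\in\sigma_{n_1-s}^{n_1\times n_2}$, and let $z_i$ be the element of $\bbP V_0$ annihilated by $\alpha_i$. The $z_i$ are distinct because the $[\alpha_i]$ are. The displayed identity gives $r(T)+d(\alpha_i)\geq s$. We then want $d(\alpha_i)\geq s$, which is exactly the claim that there are at least $s$ Jordan blocks with eigenvalue $z_i$.

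The main obstacle is the term $r(T)$. It must vanish for the argument to close, and that is where the intended reading of the hypothesis enters. If $r(T)\geq s$, the whole line lies in $\sigma_{n_1-s}$, and the phrase "intersecting in at least $p$ points" carries no information. So I would make explicit that the hypothesis refers to a line meeting $\sigma_{n_1-s}^{n_1\times n_2}$ in isolated points, and more precisely that the pencil has generic rank $n_1$, i.e.\ $r(T)=0$. This is exactly the situation in the application in \autoref{thm: triangle and Z variety}: as observed in Step 2 of the proof of \autoref{thm: triangular subcritical}, limits of the generically full-rank pencils $A(\zeta_1,\dots,\zeta_m)$ have rank $n_1$ as matrices of linear forms, hence carry no right singular blocks beyond those removed by conciseness. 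With $r(T)=0$ we get $d(\alpha_i)\geq s$ for each $i$, which is the statement.

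In the general case the same computation only yields at least $s-r(T)$ Jordan blocks at each $z_i$. I would record this weaker form in a remark.
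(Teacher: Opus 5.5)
Your block-by-block rank count is the same idea as the paper's proof, which also reads off $\rank T(\alpha)$ from the Kronecker form. The paper then inducts on $p$. Your identity $n_1-\rank T(\alpha)=r(T)+d(\alpha)$ shows the induction is unnecessary, since Jordan blocks at distinct $z_i$ are distinct blocks.

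Your objection is correct, and it identifies a real gap in the paper's statement and proof. The paper asserts that $L_k$ and $R_k$ are ``full rank at every point''. That is true for each block at its own size. But $R_k$ is $(k+1)\times k$, so each such block lowers $\rank T(\alpha)$ below $n_1$ by one at every $\alpha$, and neither conciseness nor $n_1\le n_2$ excludes such blocks. Concretely, $T=R_1\boxplus L_1\boxplus J_1(z)\in\bbC^2\otimes\bbC^4\otimes\bbC^4$ is concise, and its line meets $\sigma_2^{4\times 4}$ in exactly one point. Yet $T$ has a single Jordan block, so the statement fails for $s=2$, $p=1$. Under the extra hypothesis $r(T)=0$ your argument is a complete proof. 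Without it, your bound of at least $s-r(T)$ blocks at each $z_i$ is the correct conclusion.

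Two things should be fixed.

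First, drop the ``isolated points'' reading. The example above meets $\sigma_{n_1-s}$ in an isolated point, so that reading does not rescue the statement. The needed hypothesis is exactly that the pencil has generic rank $n_1$.

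Second, your claim that the application in \autoref{thm: triangle and Z variety} automatically has $r(T)=0$ is not justified. There the lemma is used to prove the inclusion $\mathcal{Z}\subseteq\calTNS^\Delta_{\bfm,\bfn}$, so $T$ is not yet known to be a limit of the pencils $A(\zeta_1,\dots,\zeta_m)$; invoking Step 2 of \autoref{thm: triangular subcritical} is circular. Moreover, ``limits of generically full-rank pencils have generic rank $n_1$'' is false in general, because rank can drop in a limit; Step 2 uses this only for a generic element. A concrete instance: take $\bfm=(3,2,3)$ and $\bfn=(2,6,6)$. The concise pencil $R_1\boxplus L_1\boxplus J_1(z_1)\boxplus J_1(z_2)\boxplus J_1(z_3)$ has a line meeting $\sigma_4^{6\times 6}$ in exactly three points. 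It nevertheless has only one Jordan block at each $z_i$, not $m_{12}-k_1=2$. So that inclusion needs an additional argument built on your weaker bound $s-r(T)$, for instance showing directly that such pencils are still degenerations of $A(\zeta_1,\dots,\zeta_m)$; the corrected lemma alone does not suffice.

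A minor point: the corank-one claim for $J_q(z)$ at $\alpha(z)=0$ requires the superdiagonal linear form to be independent of $z$. With the paper's convention, using $a_1$, this fails when $z$ is proportional to $a_1$. State this normalization directly rather than through the garbled parenthetical.
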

\begin{proof}

We prove the statement proceeding by induction on $p$. If $p = 0$ there is nothing to prove. Let $p \geq 1$ and suppose the statement holds for $p-1$.

Let $\alpha_1 \vvirg \alpha_p \in V_0^*$ be such that $\rank (T(\alpha_i)) \leq n_1-s$ and let $z_1 \vvirg z_p \in V_0$ be the unique, up to scaling, elements such that $\alpha_j(z_j) = 0$. The condition that $\rank ( T(\alpha_p) ) \leq n_1-s$ guarantees that $T$ has at least $s$ Jordan blocks with eigenvalue $z_p$. Indeed singular blocks $L_k$ and $R_k$ with $k \geq 1$ are full rank at every point; moreover, since $T$ is concise, no singular blocks $L_0$ or $R_0$ appear in the Kronecker decomposition of $T$. Therefore the only blocks that can contribute to the rank drop of $T(\alpha_p)$ are Jordan blocks with eigenvalue $z_p$. Since every Jordan block contributes with a $1$-dimensional kernel, hence a drop by $1$, we conclude that there must be $s$ such blocks. 

This shows $T = J_{\bullet}(z_p) \boxplus T'$ where $J_\bullet$ is the sum of the (at least $s$) Jordan blocks with eigenvalue $z_p$ and $T'$ is a matrix pencil with the property that its rank drops by at least $s$ in at least $p-1$ points. By the induction hypothesis $T'$ satisfies the desired statement and this concludes the proof.
\end{proof}

\subsection{Triangular tensor network with larger physical dimensions}

In this section, we consider the case of the triangular tensor network with bond dimensions $\bfm = (2,2,2)$ and physical dimensions $\bfn = (3,3,3)$. This case does not fall into the framework of \autoref{main theorem}; however, the characterization of the tensor network variety can be achieved with methods similar to the ones of the proof of \autoref{main theorem}.

Fix $V_0,V_1,V_2$ be vector spaces with $\dim V_i = 3$. A tensor $T \in V_0 \otimes V_1 \otimes V_2$ defines three determinantal varieties
\[
\scrE_i(T) = \{ \alpha \in \bbP V_i^* : \det(T(\alpha_i)) = 0 \} \subseteq \bbP V_i.
\]
The variety $\scrE_i(T)$ is a cubic curve if $T(V_i^*)$ contains matrices of rank $3$ and it is the whole $\bbP V_i^*$ otherwise. It was shown in \cite{Ng95} that if $\scrE_i(T)$ is a smooth cubic curve for one $i$ then it is for all three and the three curves $\scrE_0(T),\scrE_1(T),\scrE_2(T)$ are isomorphic as elliptic curves. In fact, it was shown that in this case the tensor $T$ is uniquely determined, up to the action of $\GL(V_0) \times \GL(V_1) \times \GL(V_2)$, by the three embeddings of the elliptic curve in $\bbP^2$, modulo a natural equivalence relation; see also \cite{Bea00} for a much more general form of this result. A more refined classification of tensors in $V_0 \otimes V_1 \otimes V_2$, also including the tensors for which the curves $\scrE_i(T)$ are singular or reducible, was given in \cite{Nur00b, Nur00} and the more recent \cite{DitDeGrMar} reproposes this classification fixing some inconsistencies.

The tensor network $\calTNS^{\Delta}_{\bfm,\bfn}$ in the case $\bfm = (2,2,2)$ and $\bfn = (3,3,3)$ can be characterized using these classifications. We record four tensors appearing in \cite{DitDeGrMar} which play a role in the proof of this result:
{\small
\[
\begin{aligned}
T^{(\lambda)}_{ \mathrm {II.2}} = &a_0 \otimes b_0 \otimes c_0 + a_1 \otimes b_1 \otimes c_1 + a_2 \otimes b_2 \otimes c_2 + \lambda ( a_0 \otimes b_1 \otimes c_2 + a_1 \otimes b_2 \otimes c_0 + a_2 \otimes b_0 \otimes c_1) \\
 &+ a_0\otimes b_2 \otimes c_1;\\
T_{ \mathrm {III.2}} = &a_0 \otimes b_0 \otimes c_0+a_1 \otimes b_1 \otimes c_1+a_2 \otimes b_2 \otimes c_2  \\
&+a_0 \otimes b_1 \otimes c_2+a_0 \otimes b_2 \otimes c_1+a_1 \otimes b_0 \otimes c_2;\\
T_{ \mathrm {IV.2}} = &a_0 \otimes b_1 \otimes c_2 + a_1 \otimes b_2 \otimes c_0 + a_2 \otimes b_0 \otimes c_1 - a_1 \otimes b_0 \otimes c_2 - a_2 \otimes b_1 \otimes c_0 - a_0 \otimes b_2 \otimes c_1 \\
       &+ a_0 \otimes b_0 \otimes c_1 + a_0 \otimes b_1 \otimes c_0 + a_1 \otimes b_0 \otimes c_0 + a_0 \otimes b_2 \otimes c_2 + a_2 \otimes b_0 \otimes c_2 + a_2 \otimes b_2 \otimes c_0 ;\\
T_{ \mathrm {0.2}} = & a_0 \otimes b_1 \otimes c_2+a_0 \otimes b_2 \otimes c_1+a_1 \otimes b_0 \otimes c_2+a_1 \otimes b_1 \otimes c_0+a_1 \otimes b_1 \otimes c_1+a_2 \otimes b_0 \otimes c_0.
\end{aligned}
\]}
The indices correspond to the families in the classification of \cite{DitDeGrMar}; the first index in roman numerals correspond to the family in \cite{Nur00}, and the second index is the position of the tensor in such family: for instance, $T_{ \mathrm {III.2}}$ is the second tensor of Nurmiev's third family, which appears in the second row of \cite[Table IV]{DitDeGrMar}. The index $0$ corresponds to the family of orbits in the nullcone for the action of $\SL(V_0) \times \SL(V_1) \times \SL(V_2)$, which appears in  \cite[Table I]{DitDeGrMar}.
\begin{theorem}\label{thm: triangle 222 333}
 Let $\Delta$ be the triangle graph with bond dimensions $\bfm = (2,2,2)$ and local dimensions $\bfn = (3,3,3)$. Let $T \in V_0 \otimes V_1 \otimes V_2$. The following are equivalent: 
\begin{enumerate}
\item $T \in \calTNS^{\Delta}_{\bfm,\bfn}$;
\item the defining polynomials of $\scrE_0(T), \scrE_1(T), \scrE_2(T)$ are reducible;
\item $T$ is in the orbit-closure of the family of tensors $T^{(\lambda)}_{ \mathrm {II.2}}$.
\end{enumerate} 
\end{theorem}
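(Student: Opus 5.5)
The plan is to prove the cycle (1)$\Rightarrow$(2)$\Rightarrow$(3)$\Rightarrow$(1), with the Nurmiev / Ditt--De Graaf--Marrani classification doing the heavy lifting in the middle step. Throughout we use that for $\bfm=(2,2,2)$ the graph tensor $T(\Delta,\bfm)\in W_0\otimes W_1\otimes W_2$, $\dim W_i=4$, is the $2\times 2$ matrix multiplication tensor. For $\beta\in W_0^*$, write $A_\beta$ for the corresponding $2\times2$ matrix. Then the flattening $T(\Delta,\bfm)(\beta)\in W_1\otimes W_2$ is, in suitable bases, the $4\times 4$ matrix $A_\beta\otimes \bfI_2$.

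\emph{(1)$\Rightarrow$(2).} Since reducibility of a ternary cubic (allowing the identically zero polynomial) is a Zariski closed condition, it suffices to check the statement on $T=(X_0\otimes X_1\otimes X_2)T(\Delta,\bfm)$.
\begin{itemize}
\item For $\alpha\in V_0^*$, we have $T(\alpha)=X_1(A_{X_0^\bft\alpha}\otimes\bfI_2)X_2^\bft$, a $3\times 3$ matrix.
\item By Cauchy--Binet, $\det T(\alpha)$ is a bilinear combination of $3\times3$ minors of $A\otimes\bfI_2$, where $A=A_{X_0^\bft\alpha}$.
\item These minors are the entries of $\mathrm{adj}(A\otimes \bfI_2)=\det(A)\,(\mathrm{adj}A\otimes\bfI_2)$. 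Hence each of them is divisible by the quadric $q(\alpha)=\det A_{X_0^\bft\alpha}$, and the cofactor is linear in $\alpha$.
\item Therefore $\det T(\alpha)=q(\alpha)\ell(\alpha)$ is reducible (conic plus line, or zero).
\end{itemize}
The network and the matrix multiplication tensor are invariant under cyclic permutation of the three factors, so the same argument applies to $\scrE_1(T)$ and $\scrE_2(T)$.

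\emph{(3)$\Rightarrow$(1).} The tensor network variety is closed and $\GL(V_0)\times\GL(V_1)\times\GL(V_2)$-stable, so it suffices to show $T^{(\lambda)}_{\mathrm{II.2}}\in\calTNS^\Delta_{\bfm,\bfn}$ for generic $\lambda$. I would first try to write explicit maps $X_i\colon W_i\to V_i$ realizing $T^{(\lambda)}_{\mathrm{II.2}}$ as a restriction of the $2\times 2$ matrix multiplication tensor, guided by the conic-plus-line shape of its curves. If a direct construction is elusive, the fallback is a dimension argument:
\begin{itemize}
\item Compute $\dim\calTNS^\Delta_{\bfm,\bfn}=25=\expdim$ via the rank of the differential of $\Phi$ at a random point.
\item Show that a generic element of $\calTNS^\Delta_{\bfm,\bfn}$ has curves that are a smooth conic plus a transversal line. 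Its normal form in the classification then lies in family II.2.
\item Compare with the dimension of the orbit closure of the one-parameter family. This gives equality of two irreducible varieties of the same dimension, which is what (3)$\Rightarrow$(1) needs.
\end{itemize}

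\emph{(2)$\Rightarrow$(3).} I expect this step to be the main obstacle. The plan is a case analysis through the tables of \cite{DitDeGrMar}.
\begin{itemize}
\item Discard every orbit for which some $\scrE_i(T)$ is irreducible: smooth elliptic curves from Nurmiev's family I, and nodal or cuspidal irreducible cubics.
\item Check that the surviving families are exactly II.2 together with its degenerations. Among these I expect boundary orbits such as $T_{\mathrm{III.2}}$ and $T_{\mathrm{IV.2}}$, and nullcone orbits up to $T_{\mathrm{0.2}}$.
\item For each surviving orbit, exhibit an explicit one-parameter degeneration from $T^{(\lambda)}_{\mathrm{II.2}}$. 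This may require letting $\lambda$ tend to a special value or infinity while acting by a suitable torus.
\item It suffices to do this for the maximal elements $T_{\mathrm{III.2}},T_{\mathrm{IV.2}},T_{\mathrm{0.2}}$ of the surviving poset. The remaining orbits then follow from the known closure relations among nullcone orbits.
\end{itemize}
The delicate point is the condition that \emph{all three} curves are reducible. Some orbits have one reducible curve but not the others, and these must be excluded by computing all three determinants for each normal form.
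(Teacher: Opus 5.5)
Your proposal has the same architecture as the paper's proof. It runs the cycle (1)$\Rightarrow$(2)$\Rightarrow$(3)$\Rightarrow$(1). For (1)$\Rightarrow$(2) it uses the divisibility of the $3\times 3$ minors of the flattening $\bfI_2\boxtimes A$ by $\det A$; your Cauchy--Binet argument with $\mathrm{adj}(A\otimes\bfI_2)=\det(A)\,(\mathrm{adj}A\otimes\bfI_2)$ is a cleaner version of the paper's ``restrictions of reducible polynomials are reducible''. For (2)$\Rightarrow$(3) it inspects the Ditt--De Graaf--Marrani tables and arrives at the same three maximal survivors $T_{\mathrm{III.2}},T_{\mathrm{IV.2}},T_{\mathrm{0.2}}$.

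What you leave as intentions, the paper carries out. For (3)$\Rightarrow$(1) the direct construction you hoped for exists and is short. For $\lambda^3\neq -1$, put $\mu=(1+\lambda^3)^{1/3}$. Send $v^{(0)}_{00},v^{(0)}_{01},v^{(0)}_{10},v^{(0)}_{11}$ to $\mu^{-1}a_0,\ \mu a_1,\ a_2,\ \mu^{-1}\lambda a_0$. In the same index order, send the $v^{(1)}$'s to $\mu^{-1}b_2,\mu b_0,b_1,\mu^{-1}\lambda b_2$ and the $v^{(2)}$'s to $\mu^{-1}c_1,\mu c_2,c_0,\mu^{-1}\lambda c_1$. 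The six terms of the graph tensor with $(i,j,k)\neq(0,0,0),(1,1,1)$ produce the diagonal and the $\lambda$-cyclic terms. The two remaining terms both land on $a_0\otimes b_2\otimes c_1$, with total coefficient $\mu^{-3}(1+\lambda^3)=1$. Closedness handles $\lambda^3=-1$.

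Your dimension fallback is also valid, but its middle bullet is unnecessary. Once (1)$\Rightarrow$(2)$\Rightarrow$(3) is proved, you already have $\calTNS^\Delta_{\bfm,\bfn}\subseteq\calT$, where $\calT$ is the orbit-closure of the family $T^{(\lambda)}_{\mathrm{II.2}}$. It then suffices to know three things:
\begin{itemize}
\item $\calT$ is irreducible;
\item $\dim\calT\le 25$, because the diagonal torus fixing $T^{(\lambda)}_{\mathrm{II.2}}$ is $3$-dimensional, one more than the kernel of the action;
\item $\dim\calTNS^\Delta_{\bfm,\bfn}\ge 25$, from the rank of $d\Phi$ at one explicit point.
\end{itemize}
That route costs a machine rank computation and gains nothing over the explicit restriction.

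For (2)$\Rightarrow$(3), note that the three degenerations behave differently:
\begin{itemize}
\item $T_{\mathrm{III.2}}$ is of the type you anticipate. The paper uses $\lambda_\eps=\eps^{-1}$ with permutation-times-diagonal curves.
\item $T_{\mathrm{IV.2}}$ is not. The paper takes $\lambda_\eps=-1+\eps^2$, approaching the special value $1+\lambda^3=0$ where the restriction above breaks down. It combines this with non-diagonal curves $g_i(\eps)$ that mix the powers $\eps^{-1},\eps,\eps^2,\eps^4$. So you should not expect a torus action to suffice there.
\item $T_{\mathrm{0.2}}$ is reached from a factor permutation of $T_{\mathrm{III.2}}$, followed by a fixed change of basis and a torus. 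This is legitimate only because $\calT$ is stable under permuting the three factors. A cyclic shift of the factors of $T^{(\lambda)}_{\mathrm{II.2}}$ is brought back to $T^{(\lambda)}_{\mathrm{II.2}}$ by relabelling all indices by $i\mapsto i+1$ modulo $3$. A transposition is brought back by $i\mapsto -i$ modulo $3$. State this explicitly if you use the same shortcut, and likewise when handling the permutations of $T_{\mathrm{III.2}}$ among the survivors.
\end{itemize}
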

\begin{proof}
We first prove the implication (1) $\Rightarrow$ (2). Let $T \in \calTNS^{\Delta}_{\bfm,\bfn}$ be a generic element, that is a restriction of the graph tensor 
\[
T(\Delta, (2,2,2) ) = \sum_{i,j,k=0}^1 v^{(0)}_{ij} \otimes v^{(1)}_{jk} \otimes v^{(2)}_{ki} \in W_0 \otimes W_1 \otimes W_2.
\]
 The image of the flattening $W_0^* \to W_1 \otimes W_2$ of the graph tensor is the linear space of matrices defined by 
\[
M_0 = I_2 \boxtimes \left( \begin{array}{cc}
    v_{00} & v_{0 1} \\
    v_{10} & v_{11}
\end{array}\right) \subseteq W_1 \otimes W_2,
\]
where we indicate $v_{ij} = v^{(0)}_{ij}$ suppressing the superscript. The determinant of this matrix is $\det(M_0) = (v_{00} v_{11} - v_{01} v_{10})^2$ which is an element of $S^4 W_0$. Now let $T = X_0 \otimes X_1 \otimes X_2 (T(\Delta, (2,2,2)))$ be a generic element of $\calTNS^{\Delta}_{\bfm,\bfn}$.

Then the defining equation of $\scrE_0(T)$ is a cubic arising as a linear combination of size $3$ minors of $M_0$ restricted via the map $X_0$. By construction, the size $3$ minors of $M_0$ are multiples of $v_{00} v_{11} - v_{01} v_{10}$; in particular they are reducible. Since restriction of reducible polynomials are reducible, the same holds for the defining equation of $\scrE_0(T)$. The same argument applies to $\scrE_1(T)$ and $\scrE_2(T)$, proving the inclusion $\calTNS^{\Delta}_{\bfm,\bfn} \subseteq \calZ$.

The implication (2) $\Rightarrow$ (3) follows from the classification of \cite{Nur00b, Nur00, DitDeGrMar}. Direct computation on the classification guarantees that the only elements for which the curves $\scrE_i(T)$ are reducible are, up to changing coordinates
\begin{itemize}
\item $T^{(\lambda)}_{ \mathrm {II.2}}$ and its degenerations;
\item $T_{ \mathrm {III.2}}$, its permutations and its degenerations;
\item $T_{ \mathrm {IV.2}}$ and its degenerations;
\item $T_{ \mathrm {0.2}}$ and its degenerations.
\end{itemize}
Consider the variety 
\[
\calT = \overline{ \GL(V_0) \times \GL(V_1) \times \GL(V_2) \cdot \{ T^{(\lambda)}_{ \mathrm{II.2}} : \lambda \in \bbC \}}.
\]
We are going to show that $T_{ \mathrm{III.2}}, T_{ \mathrm{IV.2}},T_{ \mathrm{0.2}}$ belong to $\calT$. This is verified explicitly by determining suitable curves $g_{i}(\eps) \in \GL(V_i)$ and $\lambda_\eps \in \bbC$ such that the limit $\lim _{\eps \to 0} [(g_0(\eps) \otimes g_1(\eps) \otimes g_2(\eps)) T^{(\lambda_\eps)}_{ \mathrm{II.2}}]$ equals the desired tensor.

For the case of $T_{ \mathrm{III.2}}$, consider $\lambda_\eps = \eps^{-1}$ and 
\[
\begin{array}{rrlcrrlcrrl}
    g_0(\eps) : & a_0 & \mapsto \eps^{-1} a_0  & \quad & g_1(\eps) : & b_{0} & \mapsto  b_2 & \quad & g_2(\eps) :  & c_{0} & \mapsto \eps c_1 \\
      & a_{1} & \mapsto \eps^{-1} a_1 & \quad & & b_{1} & \mapsto \eps b_0 & \quad & & c_{1} & \mapsto c_2 \\ 
      & a_{2} & \mapsto \eps a_2 & \quad & & b_{2} & \mapsto \eps b_1 & \quad & & c_{2} & \mapsto \eps c_0 
\end{array}
\]
Then 
\[
 T_{ \mathrm{III.2}} = \lim_{\eps \to 0} [(g_0(\eps) \otimes g_1(\eps) \otimes g_2(\eps)) \cdot  T^{(\lambda_\eps)}_{ \mathrm{II.2}} ],
\]
showing $T_{ \mathrm{III.2}} \in \calT$. 

For the case of $T_{ \mathrm{IV.2}}$, consider $\lambda_\eps = -1 + \eps^2$ and
\begin{gather*}
\begin{array}{rrlcrrl}
    g_0(\eps) : & a_0 & \mapsto 2a_0 + \eps^4 a_1 - \eps^2 a_2  & \quad & 
    g_1(\eps) : & b_{0} & \mapsto  -\frac{2}{3}\eps^{-1} b_0  + \eps b_2 \\ 
    & a_{1} & \mapsto -\frac{2}{3}\eps^{-1} a_0  + \eps a_2 & \quad & 
    & b_{1} & \mapsto 2\eps^{-1} b_0 + \eps b_2 \\
    & a_{2} & \mapsto 2\eps^{-1} a_0 + \eps a_2 & \quad & 
    & b_{2} & \mapsto 2b_0 + \eps^4 b_1 - \eps^2 b_2  \\
\end{array}\\
\begin{array}{rrl}
    g_2(\eps) : & c_0 & \mapsto 2\eps^{-1} c_0 + \eps c_2  \\ 
    & c_{1} & \mapsto  2c_0 + \eps^4 c_1 - \eps^2 c_2  \\
    & c_{2} & \mapsto -\frac{2}{3}\eps^{-1} c_0  + \eps c_2 
\end{array}
\end{gather*}
Then 
\[
T_{ \mathrm{IV.2}} = \lim_{\eps \to 0} [(g_0(\eps) \otimes g_1(\eps) \otimes g_2(\eps)) \cdot  T^{(\lambda_\eps)}_{ \mathrm{II.2}} ].
\]
Finally, we show that $T_{ \mathrm{0.2}}$, is a degeneration of (a permutation of) $T_{ \mathrm{III.2}}$. Consider the tensor obtained from $T_{ \mathrm{III.2}}$ by permuting the second and third tensor factors and then applying the change of coordinates given by 
\[
\begin{array}{rrlcrrlcrrl}
    h_0 : & a_0 & \mapsto a_1 - a_0  & \quad & h_1 : & b_{0} & \mapsto  b_0 & \quad & h_2 :  & c_{0} & \mapsto c_2-c_1 \\
      & a_{1} & \mapsto a_0  & \quad & & b_{1} & \mapsto b_0 + 2b_1+ b_2 & \quad & & c_{1} & \mapsto c_0 + c_2 \\ 
      & a_{2} & \mapsto a_0-2a_1+a_2 & \quad & & b_{2} & \mapsto b_0 + b_1 & \quad & & c_{2} & \mapsto  c_0  .
\end{array}
\]
The resulting tensor is 
{ \small
\[
T' = a_0 \otimes b_1\otimes c_2+a_0\otimes b_2\otimes c_1+a_1\otimes b_0\otimes c_2+a_1\otimes b_1\otimes c_0+a_1\otimes b_1\otimes c_1+a_1\otimes b_2\otimes c_0+a_2\otimes b_0\otimes c_0+a_2\otimes b_1\otimes c_0.
\]
}
Now consider the degeneration defined by 
\[
\begin{array}{rrlcrrlcrrl}
    g_0(\eps) : & a_0 & \mapsto a_0  & \quad & g_1(\eps) : & b_{0} & \mapsto \eps^{-2} b_0 & \quad & g_2(\eps) :  & c_{0} & \mapsto c_0 \\
      & a_{1} & \mapsto \eps a_1 & \quad & & b_{1} & \mapsto \eps^{-1} b_1 & \quad & & c_{1} & \mapsto c_1 \\ 
      & a_{2} & \mapsto \eps^2 a_2 & \quad & & b_{2} & \mapsto  b_2 & \quad & & c_{2} & \mapsto \eps c_2 
\end{array}
\]
and conclude observing that 
\[
T_{ \mathrm{0.2}} = \lim_{\eps \to 0} [(g_0(\eps) \otimes g_1(\eps) \otimes g_2(\eps)) \cdot  T' ] .
\]

The implication (3) $\Rightarrow$ (1) can be verified exhibiting an explicit restriction of $T(\Delta, (2,2,2))$ to $T^{(\lambda)}_{ \mathrm{II.2}}$. Write $T(\Delta,(2,2,2)) = \sum_{i,j,k=0}^1 v^{(0)}_{ij} \otimes v^{(1)}_{jk} \otimes v^{(2)}_{ki}$ for the graph tensor. For a given $\lambda$, let $\mu = (1+\lambda^3)^{1/3}$, and consider the element $(X_0,X_1,X_2) \in \Hom(W_0,V_0) \times \Hom(W_1,V_1) \times \Hom(W_2,V_2)$ defined by
\[
\begin{array}{rrlcrrlcrrl}
    X_0 : & v^{(0)}_{00} & \mapsto \mu^{-1} a_0  & \quad & X_1 : & v^{(1)}_{00} & \mapsto \mu^{-1}b_2 & \quad & X_2 :  & v^{(2)}_{00} & \mapsto \mu^{-1}c_1 \\
      & v^{(0)}_{01} & \mapsto \mu a_1 & \quad & & v^{(1)}_{01} & \mapsto \mu b_0 & \quad & & v^{(2)}_{01} & \mapsto \mu c_2 \\ 
      & v^{(0)}_{10} & \mapsto a_2 & \quad & & v^{(1)}_{10} & \mapsto b_1 & \quad & & v^{(2)}_{10} & \mapsto c_0 \\ 
      & v^{(0)}_{11} & \mapsto \mu^{-1}\lambda a_0 & \quad & & v^{(1)}_{11} & \mapsto \mu^{-1} \lambda b_2 & \quad & & v^{(2)}_{11} & \mapsto \mu^{-1} \lambda  c_1 \\
\end{array}
\]
Then one can verify $T^{(\lambda)}_{\mathrm{II.2}} = (X_0,X_1,X_2) T(\Delta,(2,2,2))$. This proves the last implication and concludes the proof. 
\end{proof}

\autoref{thm: triangle 222 333} yields a complete system of set-theoretic equations for $\calTNS^{\Delta}_{(2,2,2),(3,3,3)}$ as the pull-back of the equations defining the variety of reducible cubics in $S^3 V_0 , S^3 V_1 , S^3 V_2$. These are equations of degree $8$ in the coefficients of the cubic form $f \in S^3 \bbC^3$, arising as maximal minors of the map 
\begin{equation}\label{eq: ruppert map}
\begin{aligned}
\rho: \fraksl_3 \to S^3 \bbC^3 \\ 
X \mapsto X.f,
\end{aligned}
\end{equation}
where $X.f$ is the image of $f$ via the Lie algebra action of $X \in \fraksl_3$ on $S^3 \bbC^3$; they are the first instance of \emph{Ruppert's equations}, from \cite{Rup85}. The minors of the Ruppert map in \eqref{eq: ruppert map} yields a system of $35$ equations of degree $8$, see, e.g., \cite[Remark 3.15]{FGOV25}. Since every cubic $\scrE_i(T)$ has coefficients of degree $3$ in the coefficients of $T$, one obtains a system of (at most) $3 \times 35 = 105$ equations of degree $24$ which set-theoretically cut out $\calTNS^{\Delta}_{(2,2,2),(3,3,3)}$. We record this fact in the following corollary.
\begin{corollary}\label{cor: ruppert for TNS}
     Set-theoretic equations for $\calTNS^{\Delta}_{(2,2,2),(3,3,3)}$ are given by a system of degree $24$ polynomials on $V_0 \otimes V_1 \otimes V_2$ arising as maximal minors of three $10 \times 8$ matrices whose entries are cubic forms in the coefficients of $T \in V_0 \otimes V_1 \otimes V_2$.
\end{corollary}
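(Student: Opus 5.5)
The corollary follows from \autoref{thm: triangle 222 333} together with the classical description of reducible plane cubics through Ruppert's map. By the equivalence (1) $\Leftrightarrow$ (2) of \autoref{thm: triangle 222 333}, a tensor $T$ lies in $\calTNS^{\Delta}_{(2,2,2),(3,3,3)}$ if and only if each of the three cubic forms $f_i(T) = \det T(\alpha)$, $\alpha \in V_i^*$, $i=0,1,2$, is reducible. This includes the degenerate case where $f_i(T)$ vanishes identically, which we regard as reducible; it is consistent with the statement, since the zero form lies in every closed cone of forms. Each coefficient of $f_i(T)$ is a cubic polynomial in the entries of $T$, because it is a $3\times 3$ determinant of a matrix whose entries are linear in $\alpha$ and linear in $T$. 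So $f_i : V_0 \otimes V_1 \otimes V_2 \to S^3 V_i$ is a homogeneous polynomial map of degree $3$.

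The plan has three steps.
\begin{enumerate}
\item Recall that the variety of reducible cubics in $S^3 \bbC^3 \simeq \bbC^{10}$ is set-theoretically cut out by the maximal minors of the Ruppert map $\rho_f : \fraksl_3 \to S^3\bbC^3$, $X \mapsto X.f$, from \eqref{eq: ruppert map}. Here $\rho_f$ is a $10 \times 8$ matrix whose entries are linear in the coefficients of $f$. Its maximal $8\times 8$ minors therefore have degree $8$ in $f$, and they vanish exactly when $\rho_f$ fails to be injective; by \cite{Rup85} (see \cite[Remark 3.15]{FGOV25}) this happens exactly when $f$ is reducible.
\item Substitute $f = f_i(T)$. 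This gives three $10 \times 8$ matrices $\rho_{f_i(T)}$ whose entries are cubic forms in the coefficients of $T$. Their maximal minors are then polynomials of degree $3 \cdot 8 = 24$ on $V_0\otimes V_1\otimes V_2$.
\item The common zero locus of these minors is exactly $\{T : f_0(T), f_1(T), f_2(T) \text{ all reducible}\}$, which equals $\calTNS^{\Delta}_{(2,2,2),(3,3,3)}$ by \autoref{thm: triangle 222 333}. This yields the set-theoretic equations.
\end{enumerate}

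The only genuinely nontrivial input is step 1, the set-theoretic characterization of reducible ternary cubics by non-injectivity of $\rho_f$. The reason behind it is that a cubic has a positive-dimensional $\fraksl_3$-stabilizer if and only if it is not a smooth cubic or a nodal or cuspidal irreducible cubic. This should be checked against the orbit classification of plane cubics, in particular confirming that the irreducible nodal and cuspidal cubics have finite stabilizer in $\SL_3$. I would cite \cite{Rup85} and \cite[Remark 3.15]{FGOV25} for this rather than reprove it; that citation is the main point to verify.
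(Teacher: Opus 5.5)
Your proposal is correct and follows the paper's own argument: you pull back the maximal minors of the $10\times 8$ Ruppert matrix $\rho_f$ along the three degree-$3$ maps $T\mapsto \det T(\alpha)$, $\alpha\in V_i^*$, and then invoke the equivalence (1)$\Leftrightarrow$(2) of \autoref{thm: triangle 222 333}. The stabilizer check you flag does hold. For the cuspidal cubic $y^2z-x^3$, the $\frakgl_3$-annihilator is spanned by $\mathrm{diag}(0,1,-2)$, which is not traceless, so $\rho_f$ is injective there; smooth and nodal cubics have finite stabilizers; and every reducible orbit type ($xyz$, conic plus secant or tangent line, concurrent lines, double or triple line, $0$) has a nonzero traceless annihilator.
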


\section{Extension of arbitrary graphs via pencils}\label{sec:6}
In this section, we provide an analog of \autoref{thm: triangle mm1} for arbitrary networks. The tensor network discussed in \autoref{main theorem} can be thought of as the network on the graph $\Gamma' = (\{1,2\},\{(1,2)\})$, consisting of a single segment, augmented with a node $0$ and two edges $(0,1)$ and $(0,2)$, with physical dimension $n_0 = 2$ and bond dimensions $m_{01}$ and $m_{02}$ on the two additional edges. 

One may perform this operation starting from any arbitrary network $\Gamma' = (v(\Gamma'), e(\Gamma'))$ with two distinguished nodes $1,2$ and study the relations between tensor network varieties on $\Gamma'$ and on its augmentation. In general, controlling the resulting tensor network variety is more challenging, because there is no analog of the Kronecker normal forms of matrix pencils.  However, in the case where all vertices are critical except for the new vertex $0$, we can provide a characterization similar to the one of \autoref{thm: triangle mm1}. 

Given a network $\Gamma'$ with $d$ nodes $[d] = \{1 , \vvirg , d\}$, the \emph{augmentation} of $\Gamma$ at nodes $\{1,2\}$ is the graph $\Gamma$ with vertex set $\bfv(\Gamma) = \{0\} \cup [d]$ and edge set $\bfe(\Gamma) = \bfe(\Gamma') \cup \{ (0,1),(0,2)\}$; see \autoref{fig:augmentation-pin222}.

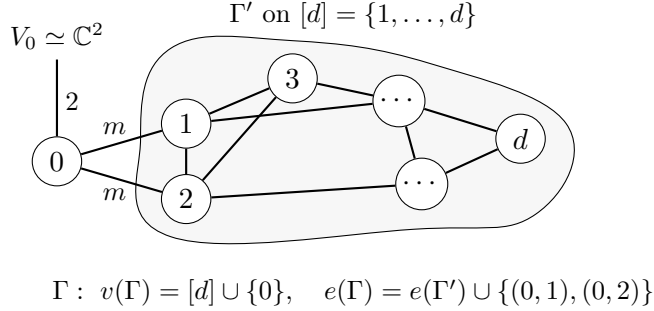
\begin{figure}[h!]
\centering
\begin{tikzpicture}[
  vertex/.style={circle,draw,fill=white,inner sep=1.6pt,minimum size=6.5mm},
  pinv/.style={circle,draw,fill=white,inner sep=1.6pt,minimum size=6.5mm},
  edge/.style={thick},
  lab/.style={font=\small}
]

% place some nodes
\node[vertex] (v1) at (-2.2, 0.4) {$1$};
\node[vertex] (v2) at (-2.2,-0.6) {$2$};

\node[vertex] (v3) at (-0.8, 1.0) {$3$};
\node[vertex] (v4) at ( 0.6, 0.7) {$\cdots$};
\node[vertex] (v5) at ( 0.9,-0.4) {$\cdots$};
\node[vertex] (vd) at ( 2.2, 0.2) {$d$};

% draw some edges to suggest an arbitrary connected graph
\draw[edge] (v1) -- (v2);
\draw[edge] (v1) -- (v3);
\draw[edge] (v1) -- (v4);
\draw[edge] (v2) -- (v3);
\draw[edge] (v2) -- (v5);
\draw[edge] (v3) -- (v4);
\draw[edge] (v4) -- (v5);
\draw[edge] (v4) -- (vd);
\draw[edge] (v5) -- (vd);

% a light background "cloud" around Gamma'
\begin{pgfonlayer}{background}
  \draw[fill=black!3] plot[smooth cycle, tension=0.7] coordinates {
     (-2.8, 0.2) (-2.4, 1.4) (-0.8, 1.5) (1.0, 1.2) (2.8, 0.4) (2.4, -0.8) (0.8, -1.0) (-2.4, -1.1)
  };
  \node[inner sep=10pt,
        fit=(v1)(v2)(v3)(v4)(v5)(vd)] (box) {};
\end{pgfonlayer}
\node[lab] at ($(box.north)+(0,0.15)$) {$\Gamma'$ on $[d]=\{1,\dots,d\}$};

% --- Attach the (2,2,2)-pin at the edge (1,2)
\node[pinv] (v0) at (-3.9,-0.1) {$0$};

\draw[edge] (v0) -- (v1)
  node[pos=0.55, above left=2pt, lab] {}
  node[pos=0.78, above left=1pt, xshift=-2pt, yshift=-4pt, lab] {$m$};

\draw[edge] (v0) -- (v2)
  node[pos=0.55, below left=2pt, lab] {}
  node[pos=0.78, below left=1pt, xshift=-2pt, yshift=4pt, lab] {$m$};

% physical leg at 0
\draw[edge] (v0) -- ++(0,1.35)
  node[above,lab] {$V_0\simeq \mathbb{C}^2$};
\node[lab] at ($(v0)+(0.2,0.8)$) {$2$};

% overall label Gamma
\node[lab] at ($(box.south)+(0,-0.55)$)
{$\Gamma:\ v(\Gamma)=[d]\cup\{0\},\quad e(\Gamma)=e(\Gamma')\cup\{(0,1),(0,2)\}$};

\end{tikzpicture}
\caption{Augmentation of a generic network $\Gamma'$ at nodes $\{1,2\}$, with bond dimensions $m$ on the new edges and physical dimension $2$ at the new vertex.}
\label{fig:augmentation-pin222}
\end{figure}

\begin{theorem}\label{augmentation:theorem}
Let $\Gamma'=(\bfv(\Gamma'),\bfe(\Gamma'))$ be a network with vertex set $\bfv(\Gamma')=[d]=\{1,\dots,d\}$ and let $\bfm'$ be an assignment of bond dimensions on $\Gamma'$. Let $\Gamma$ be the augmentation of $\Gamma'$ at nodes $\{1,2\}$. Let $\bfm$ be an assignment of bond dimensions on $\Gamma$ with 
\[
m_{e} = m'_e \text{ for all } e \in \bfe(\Gamma'), \qquad m_{01} = m_{02} = m.
\]
Let $\bfn$ be an assignment of physical dimension on $\Gamma$ where $n_0 = 2$, and for every $j \neq 0$ $n_j=\prod_{\substack{e\ni j}} m_e$ is the critical dimension. Then $(\Gamma,\bfm,\bfn)$ is defective and
\[
\delta^{\Gamma}_{\bfm,\bfn} = \operatorname{fiberdefect}^{\Gamma}_{\bfm,\bfn} = (m-1)(m_{12}^2-1).
\]
Moreover, a generic tensor $T\in\calTNS^\Gamma_{\bfm,\bfn}$, has the form
\[
T = (\textstyle \bigoplus_{i=1}^m J_1(z_i) )  \boxtimes T(\Gamma',\bfm'),
\]
for generic $z_1 \vvirg z_m \in V_0\simeq \bbC^2$, up to the action of $\GL(V_1) \ttimes \GL(V_d)$. Here $(\bigoplus_{i=1}^m J_1(z_i) )$ is regarded as a tensor in $V_0 \otimes V_1'' \otimes V_2'' \otimes \bbC^1 \ootimes \bbC^1$ and $T(\Gamma',\bfm') \in \bbC^1 \otimes V_1' \otimes V_2' \otimes V_3 \ootimes V_d$.
\end{theorem}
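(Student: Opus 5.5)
The plan mirrors the proof of \autoref{thm: triangle mm1}. Write a generic element as $T=(X_0\otimes X_1\otimes\cdots\otimes X_d)T(\Gamma,\bfm)$. Since every vertex $j\neq 0$ is critical, $\dim V_j=\dim W_j$, so by genericity each $X_j$ is invertible. After the identification $V_j\simeq W_j$ we normalize $X_j=\id$ for $j\ge1$, and $T$ is then determined by $X_0\in V_0\otimes E_{01}^*\otimes E_{02}\simeq\bbC^2\otimes\bbC^m\otimes\bbC^m$.

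A generic square pencil has the form $X_0=\bigboxplus_{i=1}^m J_1(z_i)$ with distinct $z_i$. Contracting $X_0$ with the graph tensor factorizes along the decomposition $W_1=E_{01}\otimes W_1'$ and $W_2=E_{02}^*\otimes W_2'$, where $W_j'$ collects the bond spaces of $\Gamma'$. This yields $T=X_0\boxtimes T(\Gamma',\bfm')$, which is the normal form. Equivalently, choosing a basis of $V_0$,
\[
T=\sum_i z_i\otimes (e_i\otimes e_i^*)\otimes T(\Gamma',\bfm'),
\]
that is, $m$ copies of $T(\Gamma',\bfm')$ weighted by the $z_i$ and placed on the blocks $V_1=\bigoplus_i V_1^{(i)}$, $V_2=\bigoplus_i V_2^{(i)}$.

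The dimension then comes from the parametrization $(\zeta,g)\mapsto g\cdot T_\zeta$ with $g\in\prod_{j\ge1}\GL(V_j)$. This gives
\[
\dim\calTNS^\Gamma_{\bfm,\bfn}=m+\sum_{j\ge1}n_j^2-\dim\ann(T_\zeta),
\]
provided the generalized eigenvalues are recoverable up to finite ambiguity. They are, since the $V_0$-flattening determines the pencil structure on $V_1\otimes V_2$ after contracting against a generic vector in the other factors.

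The main obstacle is computing $\ann(T_\zeta)$ in $\bigoplus_{j\ge1}\frakgl(V_j)$. I would view $T_\zeta$ in $W\otimes V_1\otimes V_2$ with $W=V_0\otimes V_3\otimes\cdots\otimes V_d$, and $\frakg=\frakgl(V_3)\times\cdots\times\frakgl(V_d)$ acting on $W$ (with trivial action on $V_0$). Applying \autoref{lemma: ann for block sum pencils} iteratively over the $m$ blocks, the off-diagonal spaces $M_1,M_2$ vanish. An off-diagonal pair $(X_{21},Y_{12})$ would have to satisfy
\[
X_{21}(z_i\otimes T')=-Y_{12}(z_j\otimes T')
\]
with $z_i,z_j$ linearly independent in $V_0$, which forces both sides to vanish; conciseness of $T'$ on $V_1',V_2'$ then gives $X_{21}=Y_{12}=0$.

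There is a subtlety in using the lemma here. It only gives an inclusion, and the components in $\frakg$ are shared across blocks rather than split between them. So I would directly solve the condition blockwise: for each $i$, $(A_i,B_i,C)$ must annihilate $z_i\otimes T'$, where $C\in\frakg$ is common to all blocks. Using $\ann(T')=$ gauge algebra $\bigoplus_e\frakgl(E_e)$ modulo scalars (by \cite[Cor. 3.7]{BDG23}), each block contributes a gauge element whose components on the edges not incident to $\{1,2\}$ must agree across $i$. The edge $(1,2)$ component, acting on $E_{12}$ inside $V_1^{(i)},V_2^{(i)}$ only, can vary freely with $i$. The scalar rescalings per block also matter, since they can rescale $z_i$ only trivially.

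The count then gives
\[
\dim\ann=\dim\ann_{\prod_{j\ge1}\frakgl(V_j)}(T')+(m-1)(m_{12}^2-1)+\text{scalars}.
\]
Substituting into the expected dimension \eqref{eq:expdim}, the gauge contributions cancel except for the extra $(m-1)(m_{12}^2-1)$. One also checks that the minimum in \eqref{eq:expdim} is the first term, since the ambient space is larger. This yields $\delta=\mathrm{fiberdefect}=(m-1)(m_{12}^2-1)$, matching \autoref{cor:triangle_defect} when $\Gamma'$ is a single edge. Verifying this bookkeeping of gauge components, and the claim that no further symmetry exists, is the step I expect to require the most care.
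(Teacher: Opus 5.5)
Your proposal is correct and follows essentially the same route as the paper. Both use criticality to normalize $X_1,\dots,X_d$, put the generic square pencil $X_0$ in the form $\bigboxplus_i J_1(z_i)$, parametrize by $(\bfzeta,g)$ using that the generalized eigenvalues are invariant, and compute the annihilator blockwise via \autoref{lemma: ann for block sum pencils} (with $M_1=M_2=0$) together with \cite[Cor.~3.7]{BDG23}.

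Your explicit handling of the shared $\frakg$-component is more careful than the paper's displayed annihilator, which is written with $\fraksl$'s only and leaves the scalars implicit. In your treatment, only the $\frakgl(E_{12})$-components vary with the block index; their scalars supply exactly the extra $m-1$ dimensions your count needs. The components of all other edges, including edges $(1,j)$ and $(2,j)$ with $j\ge 3$, are forced to agree across blocks, and you should state that case explicitly.
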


\begin{proof}
The proof is similar to the one of \autoref{thm: triangle mm1}.

We first determine the normal form of the generic element of $\calTNS^\Gamma_{\bfm , \bfn}$. For each vertex $i$ set
\[
W_i  := \bigotimes_{(i,j) \in\bfe(\Gamma)} E_{ij}.
\]
where $E_{ij} = E_{ji}^*$ is the $m_{ij}$-dimensional bond space associated to the edge $(i,j)$. So the graph tensor $T(\Gamma,\bfm)$ is an element of $W_0 \ootimes W_d$. 

Let $(X_0 \vvirg X_d)$ be a generic element in $\prod_{i=0}^d \Hom(W_i,V_i)$ so that $T = (X_0 \ootimes X_d) T(\Gamma,\bfm)$ is a generic element of $\calTNS^\Gamma_{\bfm , \bfn}$. Because of the assumption that all vertices except $0$ are critical, we have $\dim V_j = \dim W_j$; by genericity $X_1 \vvirg X_d$ are invertible, so they define isomorphisms $W_j \simeq V_j$. Via these isomorphisms $T$ may be regarded as an element of $V_0 \otimes W_1 \ootimes W_d$, and it is uniquely determined, up to the action of $\prod_{j=1}^d \GL(V_j)$, by $X_0 \in \Hom(W_0,V_0)$.

Now, $X_0 \in \Hom(W_0, V_0) = V_0 \otimes E_{01}^* \otimes E_{02}^*$ is a matrix pencil, with $\dim E_{01} = \dim E_{02} = m$. By genericity, it has the form $X_0 = \bigoplus_{i=1}^m J_1(z_i)$ for generic $z_i \in V_0$. Contracting $X_0$ with the graph tensor $T(\Gamma,\bfm)$, we obtain 
\[
T = X_0 (T (\Gamma,\bfm)) = X_0(T(\Delta, (m,1,m)))  \boxtimes T(\Gamma',\bfm') = \left(\textstyle \bigoplus_{i=1}^m J_1(z_i)\right) \boxtimes T(\Gamma',\bfm'),
\]
which is the desired normal form. In the following, write $z_i = v_0 + \zeta_i v_1$ for generic scalars $\zeta_i \in \bbC$ and a fixed basis $\{v_0,v_1\}$ of $V_0$. Let $A(\bfzeta) := ( \bigoplus_{i=1}^m J_1(v_0 + \zeta_i v_1)) \boxtimes T(\Gamma',\bfm')$ for $\bfzeta = (\zeta_1 \vvirg \zeta_m) \in \bbC^m$.

As in the proof of \autoref{thm: triangle mm1}, this construction allows one to define a reduced version of the map $\Phi$ that parametrizes the tensor network variety; this is given by 
\begin{align*}
\tilde{\Phi} : \bbC^m \times \GL(W_1) \ttimes \GL(W_d) &\to V_0 \otimes W_1 \ootimes W_d \\
 (\bfzeta, X_1 \vvirg X_d)  & \mapsto ( X_1 \ootimes X_d) (A(\bfzeta)).
\end{align*}
The image of $\tilde{\Phi}$ is dense in $\calTNS^\Gamma_{\bfm , \bfn}$, identified as a subvariety of $V_0 \otimes W_1 \ootimes W_d$ via fixed isomorphisms $W_j \simeq V_j$ for $j \neq 0$. Therefore the dimension of the fiber of $\tilde{\Phi}$ determines the dimension of the tensor network variety. 

We first argue that for a generic $T \in \calTNS^\Gamma_{\bfm , \bfn}$, every element $(\bfzeta, X_1 \vvirg X_d) \in \tilde{\Phi}^{-1}(T)$ has the same $\bfzeta$. To see this, regard $W_1 \ootimes W_d$ as a space of matrices, $W_1 \otimes (W_2 \ootimes W_d)$: the graph tensor $T(\Gamma',\bfm')$, under this regrouping, is simply a matrix of rank $\dim W_1$. The tensor $T \in V_0 \otimes W_1 \ootimes W_d$ may be regarded as a matrix pencil in $V_0 \otimes W_1 \otimes (W_2 \ootimes W_d)$ which, by the genericity of $X_0$, has the form described in \autoref{thm: triangle mm1}, $( \bigoplus_{i=1}^m J_1(v_0 + \zeta_i v_1))^{\oplus (\dim W_1')}$. Now, the map $\tilde{\Phi}$ is a restriction of the analogous map considered in \autoref{thm: triangle mm1}, where the second and third tensor factors are replaced by $W_1$ and $W_2 \ootimes W_d$. Therefore, by the same argument as in \autoref{thm: triangle mm1}, which relies on the fact that $\GL(W_1), \GL(W_2 \ootimes W_d)$ do not change the generalized eigenvalues of the pencil, the $\bfzeta$ associated to any element in the fiber $\tilde{\Phi}^{-1}(T)$ must be the same.

We deduce that the fiber $\tilde{\Phi}^{-1}(T)$ is isomorphic, as a subvariety, to the stabilizer of $T$ under the action of $\GL(W_1) \ttimes \GL(W_d)$. In particular, its dimension coincides with the dimension of the annihilator
\[
\ann_{\frakgl(W_1) \ttimes \frakgl(W_d)} (T).
\]
 Without loss of generality, we may assume $T = A(\bfzeta)$ with generic $\bfzeta$. By construction $A(\bfzeta)$ has the structure of a sum as in \autoref{lemma: ann for block sum pencils}. Similarly to \autoref{thm: triangle mm1}, the spaces $M_1, M_2$ of \autoref{lemma: ann for block sum pencils} are trivial, because regarding $A(\bfzeta)$ as a pencil and in the previous part of the argument, the generalized eigenvalues of the different Jordan blocks are distinct. This implies that
 \begin{multline*}
 \ann_{\frakgl(W_1) \ttimes \frakgl(W_d)} (T) \subseteq \\
 \ann_{\frakg_1}(J_1(v_0 + \zeta_1 v_1) \boxtimes T(\Gamma',\bfm')) + \cdots + \ann_{\frakg_m}(J_1(v_0 + \zeta_m v_1) \boxtimes T(\Gamma',\bfm')).
 \end{multline*}
For every $i = 1 \vvirg m$, $J_1(v_0 + \zeta_i v_1) \boxtimes T(\Gamma',\bfm') = T(\Gamma',\bfm')$. By \cite[Corollary 3.7]{BDG23}, the annihilator of $T(\Gamma',\bfm')$ is the Lie algebra of the gauge subgroup 
\[
\fraksl(E_{12}) \times \bigtimes_{e \in \bfe(\Gamma')\setminus \{(1,2)\}} \fraksl(E_e) 
\]
embedded in $\frakgl(W_1) \times \frakgl(W_2) \times \frakgl(W_3) \ttimes \frakgl(W_d)$, with $\frakgl(E_e)$ acting on the bond spaces $E_e \otimes E_e^*$ which appear as factors in $W_1 \ootimes W_d$. 

The summands $\bigtimes_{e \in \bfe(\Gamma')} \fraksl(E_e)$ of the annihilator are the same for each of the $m$ terms $J_1(v_0 + \zeta_i v_1) \boxtimes T(\Gamma',\bfm)$. The summand $\fraksl(E_{12})$ is embedded in $m$ different ways in $\frakgl(W_1) \ttimes \frakgl(W_d)$, according to the direct sum decomposition $W_i = W_i' \otimes \bbC^m = W_i'^{\oplus m}$, for $i=1,2$. Therefore \autoref{lemma: direct sum ann} provides an inclusion 
\[
\ann( A(\bfzeta)) \subseteq ( \textstyle \bigoplus_{i=1}^m \fraksl(E_{12})) \oplus \left(\bigtimes_{e \in \bfe(\Gamma')} \fraksl(E_e)\right).
\]
The reverse inclusion can be verified directly. Therefore 
\[
\dim \calTNS^\Gamma_{\bfm , \bfn} = m + \sum_{j=1}^d \dim \frakgl(W_j) - \dim \ann(T)
\]
which yields the claimed formula for the defect.
\end{proof}

The proof of \autoref{augmentation:theorem} gives an explicit description of the fiber of the map $\tilde{\Phi}$, that we record here:
\begin{corollary}\label{cor:augmentation_stabilizer}
In the setting of \autoref{augmentation:theorem}, let 
\begin{align*}
\tilde{\Phi} : \bbC^m \times \GL(W_1) \ttimes \GL(W_d) &\to V_0 \otimes W_1 \ootimes W_d \\
 (\bfzeta, X_1 \vvirg X_d)  & \mapsto ( X_1 \ootimes X_d) (A(\bfzeta)).
\end{align*}
where $A(\bfz) = ( \bigoplus_{i=1}^m J_1(v_0 + \zeta_i v_1)) \boxtimes T(\Gamma',\bfm')$.

Then the image of $\tilde{\Phi}$ is dense in the tensor network variety $\calTNS^{\Gamma}_{\bfm,\bfn}$. Moreover, the fiber of $A(\bfzeta)$ is the stabilizer of $A(\bfzeta)$ under the action of $\GL(W_1) \ttimes \GL(W_d)$ and its Lie algebra is 
\[
\left(\textstyle \bigoplus_{i=1}^m \fraksl_{m_{12}, i}\right) \oplus \left(\textstyle \bigoplus_{e \in \bfe(\Gamma')} \fraksl(E_e)\right)
\]
where $\fraksl_{m_{12}, i}$ is a copy of $\fraksl_{m_{12}}$ acting on the $i$-th summand of $W_s \simeq W_{s'}^{\oplus m}$ with $s = 1,2$ and the other summands act on the corresponding pair of bond spaces.
\end{corollary}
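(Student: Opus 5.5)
The statement is essentially a repackaging of what was proved in \autoref{augmentation:theorem}, so the plan is to extract the relevant pieces from that proof and verify each claim separately.

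\textbf{Density of the image.} By genericity, $X_1 \vvirg X_d$ are invertible and $X_0$ has the normal form $\bigoplus_i J_1(v_0+\zeta_i v_1)$ up to the action of $\GL(E_{01})\times\GL(E_{02})$. This action is absorbed by $\GL(W_1)\times\GL(W_2)$, since $E_{01}^*$ and $E_{02}^*$ appear as factors of $W_1$ and $W_2$. Hence every generic element of $\calTNS^\Gamma_{\bfm,\bfn}$ lies in the image of $\tilde{\Phi}$, and the image is dense.

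\textbf{The fiber is the stabilizer.} Suppose $(\bfzeta',X_1\vvirg X_d)\mapsto A(\bfzeta)$. I would regroup the tensor as a pencil in $V_0\otimes W_1\otimes(W_2\ootimes W_d)$. In this form $A(\bfzeta)$ is the block sum of $\dim W_1'$ copies of $\bigoplus_i J_1(z_i)$. The generalized eigenvalues are invariant under $\GL(W_1)\times\GL(W_2\ootimes W_d)$, and hence under its subgroup $\prod_j \GL(W_j)$. Therefore $\{\zeta'_i\}=\{\zeta_i\}$ as sets. For generic distinct $\zeta_i$, the reordering of the eigenvalues is realized by a block permutation inside $\GL(W_1)\times\GL(W_2)$. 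Thus, up to a finite ambiguity, the fiber is identified with $\Stab_{\GL(W_1)\ttimes\GL(W_d)}(A(\bfzeta))$. The fiber is therefore a finite union of translates of this stabilizer, which is enough for both the Lie algebra and the dimension claims.

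\textbf{Computing the Lie algebra.} I would carry this out in three steps.
\begin{enumerate}
\item Apply \autoref{lemma: ann for block sum pencils} to the decomposition $W_s = W_s'\otimes\bbC^m$ for $s=1,2$. The off-diagonal spaces $M_1$ and $M_2$ vanish because the eigenvalues $\zeta_i$ are distinct, and each diagonal block is $T(\Gamma',\bfm')$.
\item Apply \autoref{lemma: direct sum ann}. All vertices other than $0$ are critical, so $T(\Gamma',\bfm')$ is concise. This confines the annihilator to the sum of the $m$ blockwise copies of $\fraksl(E_{12})$ together with the common gauge algebra $\bigoplus_{e\in\bfe(\Gamma')}\fraksl(E_e)$; here I invoke \cite[Cor. 3.7]{BDG23} for the annihilator of $T(\Gamma',\bfm')$.
\item Check the reverse inclusion directly. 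An element of the $i$-th copy $\fraksl_{m_{12},i}$ acts on the $i$-th summand of $E_{12}\otimes E_{12}^*$ in $W_1'\otimes W_2'$, and it kills $\id_{E_{12}}$ there, while fixing the other blocks. The gauge algebra on the remaining edges acts identically on every block.
\end{enumerate}

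\textbf{Main obstacle.} The delicate point is steps 1–2: justifying that the annihilator splits blockwise. Only a two-factor direct sum structure is available on $W_1$ and $W_2$, while the other factors are shared among the blocks. This is exactly why $\bigoplus_{e\in\bfe(\Gamma')}\fraksl(E_e)$ appears only once rather than $m$ times. I would argue it by first killing the off-diagonal parts via the distinct eigenvalues, and then observing that the diagonal parts on $W_3\vvirg W_d$ are forced to be common to all blocks.
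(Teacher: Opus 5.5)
Your route is the paper's own. You get density from the normal form of a generic $X_0$, and the fiber from invariance of the generalized eigenvalues of the regrouped pencil in $V_0\otimes W_1\otimes(W_2\ootimes W_d)$. You then compute the annihilator via \autoref{lemma: ann for block sum pencils}, \autoref{lemma: direct sum ann} and \cite[Cor.~3.7]{BDG23}. Your description of the fiber is more accurate than the paper's claim that every point of the fiber has the same $\bfzeta$: it is a union of $m!$ translates of the stabilizer, one for each reordering of $\bfzeta$.

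The genuine problem is the containment asserted in your step 2, which the displayed formula and the paper's proof share: it is false, for two reasons.
\begin{enumerate}
\item \cite[Cor.~3.7]{BDG23} describes the stabilizer of $T(\Gamma',\bfm')$ as the image of $\prod_e\GL(E_e)$. Its Lie algebra contains, besides $\bigoplus_e\fraksl(E_e)$, the scalars $(c_1\id\vvirg c_d\id)$ with $\sum_j c_j=0$.
\item For arbitrary $c_1\vvirg c_m$, the element $\bigl(\bigoplus_i c_i\id_{W_1'},\bigoplus_i(-c_i)\id_{W_2'},0\vvirg 0\bigr)$ kills every block of $A(\bfzeta)$. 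This is the $m$-dimensional annihilator of the generic square pencil $X_0$ in $\frakgl_m\times\frakgl_m$ (\autoref{prop: pencils orbits}). It lies in $\bigoplus_i\frakgl_{m_{12},i}$ but not in $\bigoplus_i\fraksl_{m_{12},i}$.
\end{enumerate}
Moreover, the displayed sum is not direct. The diagonal copy of $\fraksl(E_{12})$ already lies in $\bigoplus_i\fraksl_{m_{12},i}$, so $e=(1,2)$ must be removed from the second summand.

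Your resolution of the ``main obstacle'' is the right argument; it is more than \autoref{lemma: direct sum ann} literally provides, since the direct sum is only on two factors. Carry it out completely. Write each diagonal block $(X_1^{ii},X_2^{ii},X_3\vvirg X_d)$ of an annihilating element as $\sum_e Y^{(i)}_e$ plus scalars $c^{(i)}_j$, using that $\bigoplus_{e\ni j}\fraksl(E_e)\oplus\bbC\id$ is direct in $\frakgl(W_j)$. Then impose that the components on $W_3\vvirg W_d$ do not depend on $i$. You find:
\begin{itemize}
\item $Y^{(i)}_e$ is independent of $i$ exactly for $e\neq(1,2)$;
\item $Y^{(i)}_{12}$ is free;
\item $c_3\vvirg c_d$ are common to all blocks, and the remaining scalars satisfy only $c^{(i)}_1+c^{(i)}_2+\sum_{j\geq 3}c_j=0$.
\end{itemize}
Hence the annihilator is $\bigl(\bigoplus_i\fraksl_{m_{12},i}\bigr)\oplus\bigl(\bigoplus_{e\neq(1,2)}\fraksl(E_e)\bigr)\oplus\mathfrak t$, with $\mathfrak t$ abelian of dimension $m+d-2$.

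Two checks confirm this rather than the displayed algebra. First, for $\Gamma'$ a single edge it gives $\bigoplus_i\frakgl_{m_{12}}$, of dimension $mm_{12}^2$, exactly as in the proof of \autoref{thm: triangle mm1}; the displayed algebra reduces to $\bigoplus_i\fraksl_{m_{12},i}$ in that case. Second, insert the dimension $m(m_{12}^2-1)+\sum_{e\neq(1,2)}(m_e^2-1)+m+d-2$ into $\dim\calTNS^\Gamma_{\bfm,\bfn}=m+\sum_j\dim\frakgl(W_j)-\dim\ann$. Only this value returns the defect $(m-1)(m_{12}^2-1)$ of \autoref{augmentation:theorem}.

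So your density argument, your fiber argument and your reverse inclusion (step 3) stand, but step 2 must target the corrected algebra. With the algebra as displayed, your remark that the computation suffices ``for the dimension claims'' would produce the wrong defect.
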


\begin{example}\label{ex:path_augmentation}
Let $\Gamma'$ be the path $1-2-3$ with bond dimensions
$m_{12}=r\ge 2$ and $m_{23}=s\ge 2$, and take the critical physical dimensions
$n'_1=r$, $n'_2=rs$, $n'_3=s$. 
Let $\Gamma$ be obtained by attaching a $2-2-2$ corner at the edge $(1,2)$,
i.e. adding a vertex $0$ and edges $(0,1),(0,2)$ with $m_{01}=m_{02}=2$ and $n_0=2$,
and set $n_1=2n'_1=2r$, $n_2=2n'_2=2rs$, $n_3=n'_3=s$.
Then \autoref{augmentation:theorem} yields
\[
\delta^\Gamma_{\bfm,\bfn}=\operatorname{fiberdefect}^\Gamma_{\bfm,\bfn}=r^2-1,
\]
and a generic $T\in \calTNS^\Gamma_{\bfm,\bfn}$ admits the normal form
\[
T \sim \bigl(J_1(z_1)\boxplus J_1(z_2)\bigr)\boxtimes T(\Gamma',\bfm'),
\quad z_1\neq z_2\in V_0\simeq \bbC^2.
\]
\end{example}

The characterization of the fiber in \autoref{cor:augmentation_stabilizer} allows us to obtain a generalization of \autoref{augmentation:theorem}, where we allow multiple augmentations. 

\begin{corollary}\label{cor:defect_growth_many_pins}
Let $\Gamma'=(\bfv(\Gamma'),\bfe(\Gamma'))$ be a network with vertex set $\bfv(\Gamma')=[d]=\{1,\dots,d\}$ and let $\bfm'$ be an assignment of bond dimensions on $\Gamma'$. Let $\{ i_1 \vvirg i_s, j_1 \vvirg j_s\} \subseteq \bfv(\Gamma')$ be set of $2s$ vertices. Let $\Gamma$ be the augmentation of $\Gamma'$ at nodes $\{i_r,j_r\}$ for $r = 1 \vvirg s$ with additional vertices $0_1 \vvirg 0_r$. Let $\bfm$ be an assignment of bond dimensions on $\Gamma$ with 
\[
m_{e} = m'_e \text{ for all } e \in \bfe(\Gamma'), \qquad m_{0_r, i_r} = m_{0_r j_r} = m_r \text{ for all } r \in [s].
\]
Let $\bfn$ be an assignment of physical dimension on $\Gamma$ where, for $r \in [s]$, $n_{0_r} = 2$ , and for every $k \neq 0_1 \vvirg 0_s$, $n_k=\prod_{\substack{e\ni k}} m_e$ is the critical dimension. Then $(\Gamma,\bfm,\bfn)$ is defective and
\[
\delta^{\Gamma}_{\bfm,\bfn} = \operatorname{fiberdefect}^{\Gamma}_{\bfm,\bfn} = \sum_{r=1}^s(m_r-1)(m_{i_rj_r}^2-1).
\]
Moreover, the generic element of $\calTNS^{\Gamma}_{\bfm,\bfn}$ has a normal form
\[
T = \textstyle \left(\bigoplus_{k=1}^{m_1} J_1(z_{1,k})\right) \boxtimes \cdots \boxtimes \left(\bigoplus_{k=1}^{m_s} J_1(z_{s,k})\right) \boxtimes T(\Gamma',\bfm')
\]
with factors suitably tensored together.
\end{corollary}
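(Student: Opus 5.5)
We follow the proof of \autoref{augmentation:theorem}, applying its argument once for each augmentation vertex. The augmentations can be handled independently, because the vertices $0_1 \vvirg 0_s$ are pairwise non-adjacent and all other vertices are critical.

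\textbf{Normal form.} Write $T = (X_{0_1} \otimes \cdots \otimes X_{0_s} \otimes X_1 \ootimes X_d)\, T(\Gamma,\bfm)$ with generic local maps. Every vertex $k \in [d]$ is critical, so $X_k$ is invertible and can be normalized to an identification $W_k \simeq V_k$. Then $T$ is determined, up to $\prod_{k} \GL(V_k)$, by the pencils $X_{0_r} \in V_{0_r} \otimes E_{0_r i_r}^* \otimes E_{0_r j_r}^*$. By genericity each of them equals $\bigoplus_{k=1}^{m_r} J_1(z_{r,k})$. The graph tensor factors as the tensor product of the graph tensors of the $s$ corners $T(\Delta,(m_r,1,m_r))$ and $T(\Gamma',\bfm')$, and contraction respects this factorization. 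This gives the stated Kronecker-type product normal form $A(\bfzeta^{(1)} \vvirg \bfzeta^{(s)})$.

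\textbf{Dimension.} We parametrize a dense subset by
\[
\tilde\Phi : \bbC^{m_1} \ttimes \bbC^{m_s} \times \GL(W_1) \ttimes \GL(W_d) \to V_{0_1} \otimes \cdots \otimes V_{0_s} \otimes W_1 \ootimes W_d .
\]
First we show that the eigenvalue parameters are constant on generic fibers. For each $r$, flatten $T$ into a pencil in $V_{0_r} \otimes W_{i_r} \otimes (\text{rest})$, after contracting the other $V_{0_{r'}}$ with generic covectors. Its generalized eigenvalues are then $\{z_{r,k}\}_k$, each with multiplicity, and these are invariant under the group action up to the $\GL(V_{0_r})$ normalization. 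It follows that each fiber is a stabilizer, and it remains to compute $\ann_{\frakgl(W_1) \ttimes \frakgl(W_d)}(A)$.

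\textbf{Annihilator.} We claim
\[
\ann(A) = \Bigl(\bigoplus_{r=1}^{s} \bigoplus_{k=1}^{m_r} \fraksl_{m_{i_r j_r},k}\Bigr) \oplus \bigoplus_{e \in \bfe(\Gamma') \setminus \{(i_r,j_r)\}_r} \fraksl(E_e),
\]
together with the appropriate identifications. We establish the inclusion $\subseteq$ by induction on $s$. Treat $T$ as the augmentation at $\{i_s,j_s\}$ of the network $\Gamma'_{s-1}$ carrying the first $s-1$ augmentations, so that $T = \bigl(\bigoplus_k J_1(z_{s,k})\bigr) \boxtimes T_{s-1}$. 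Here $T_{s-1}$ is the generic element of the $(s-1)$-fold augmentation; it is concise and plays the role of $T(\Gamma',\bfm')$. Apply \autoref{lemma: ann for block sum pencils}, with $W = $ all other factors: the spaces $M_1, M_2$ vanish because the $z_{s,k}$ are distinct. Then apply \autoref{lemma: direct sum ann} exactly as in \autoref{augmentation:theorem}, using \autoref{cor:augmentation_stabilizer} and the inductive hypothesis for $\ann(T_{s-1})$. The annihilator of $T_{s-1}$ contains one copy of $\fraksl(E_{i_sj_s})$, and the direct sum decomposition $W_{i_s} = W_{i_s}'^{\oplus m_s}$ splits this copy into $m_s$ independent copies, raising the contribution from $m_{i_sj_s}^2-1$ to $m_s(m_{i_sj_s}^2-1)$. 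The inclusion $\supseteq$ is verified directly.

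\textbf{Conclusion.} Each augmentation therefore increases the annihilator by $(m_r-1)(m_{i_rj_r}^2-1)$ beyond the gauge algebra, while adding only $m_r$ eigenvalue parameters. The count against \eqref{eq:expdim} then gives the fiber defect $\sum_{r=1}^{s} (m_r-1)(m_{i_rj_r}^2-1)$. Finally, we check that the first term of \eqref{eq:expdim} is the minimum: it is bounded by the parameter count of the critical network, which is far smaller than $2^s \prod_k n_k$. Hence the defect equals the fiber defect.

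\textbf{Main obstacle.} We expect the hardest step to be the eigenvalue-rigidity and conciseness step when two augmentations share a vertex, for instance $i_r = i_{r'}$, or when an augmented pair $\{i_r,j_r\}$ is not an edge of $\Gamma'$. In the latter case $\fraksl(E_{i_rj_r})$ is absent and the formula should be read with $m_{i_rj_r}$ replaced appropriately. To handle the shared-vertex case, we would first work out that each $W_k$ decomposes as a tensor product of the bond spaces at $k$, so that the successive direct-sum splittings commute and the lemmas can be applied factor by factor.
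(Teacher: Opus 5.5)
Your route is the paper's: normalize the critical vertices, put each $X_{0_r}$ in Kronecker form, parametrize by eigenvalues times $\prod_j\GL(W_j)$, identify generic fibers with stabilizers via rigidity of the generalized eigenvalues, and compute the annihilator by induction on $s$ with \autoref{lemma: ann for block sum pencils} and $M_1=M_2=0$. However, the annihilator you write down is too small, and that is exactly the step that produces the number. Write $W_{i_s}=\bigoplus_k W^{(k)}_{i_s}$ and $W_{j_s}=\bigoplus_k W^{(k)}_{j_s}$ for the splitting induced by the $s$-th augmentation. Each block $J_1(z_{s,k})\boxtimes T_{s-1}$ is annihilated by the whole image of $\frakgl(E_{i_sj_s})$, not only of $\fraksl(E_{i_sj_s})$. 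Indeed, the rescalings $(c_k\,\id_{W^{(k)}_{i_s}},\,-c_k\,\id_{W^{(k)}_{j_s}})$ annihilate $A$ independently for each $k$. Before the splitting only their sum was present, as an element of the $(d-1)$-dimensional kernel of the action of $\prod_j\frakgl(W_j)$, which your displayed formula also omits. Already for $\Gamma'$ a single edge and $s=1$, your formula gives $\dim\ann(A)=m(m_{12}^2-1)$, while the proof of \autoref{thm: triangle mm1} gives $\bigoplus_1^m\frakgl_{m_{12}}$, of dimension $m\,m_{12}^2$. So the count you announce does not close. Write $\dim\ann(A)=\sum_{e\in\bfe(\Gamma')}(m_e^2-1)+(d-1)+\sum_r\Delta_r$. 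The terms $\sum_j n_j^2$ cancel, and the defect is $\sum_r(1-m_r+\Delta_r)$. Here $1$ is the net contribution of $0_r$ and its two edges to \eqref{eq:expdim}, and $m_r$ counts the new eigenvalue parameters. With your $\Delta_r=(m_r-1)(m_{i_rj_r}^2-1)$ this gives $\sum_r(m_r-1)(m_{i_rj_r}^2-2)$, not the claimed value.

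The repair is short. By the argument of \autoref{lemma: ann for block sum pencils} with $M_1=M_2=0$, $\ann(A)$ is the fiber product, over the factors other than $W_{i_s},W_{j_s}$, of $m_s$ copies of $\ann(T_{s-1})$. Hence $\dim\ann(A)=\dim\ann(T_{s-1})+(m_s-1)\dim K$, where $K\subseteq\ann(T_{s-1})$ consists of the elements acting trivially on every factor except those at $i_s,j_s$. Since $i_s,j_s$ are untouched by the other augmentations, $T_{s-1}$ is a sum of copies of $T(\Gamma',\bfm')$. These copies are tensored with the vectors $z_{1,k_1}\otimes\cdots\otimes z_{s-1,k_{s-1}}$, sit in independent blocks of the factors at the other augmented vertices, and each contains the full factors at $i_s,j_s$. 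An element of $K$ must annihilate each copy separately, and the elements of $\ann(T(\Gamma',\bfm'))$ supported at $i_s,j_s$ form the image of $\frakgl(E_{i_sj_s})$. Thus $\dim K=m_{i_sj_s}^2$, so $\Delta_s=(m_s-1)m_{i_sj_s}^2$ and $1-m_s+(m_s-1)m_{i_sj_s}^2=(m_s-1)(m_{i_sj_s}^2-1)$, as claimed. The annihilators displayed in the paper's proof and in \autoref{cor:augmentation_stabilizer}, which you cite, have the same looseness, so do the count as above rather than reading it off from them. Finally, the hypothesis that the $2s$ vertices are distinct excludes the shared-vertex case in your last paragraph. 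That hypothesis is precisely what the paper invokes and what the identification of $K$ uses. It is not merely technical: augmenting one edge twice with $m_1=m_2=2$ gives defect $3m_{12}^2-2$ rather than $2(m_{12}^2-1)$.
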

\begin{proof}
The normal form is obtained in a way similar to \autoref{augmentation:theorem}. Since the dimensions at vertices $k \neq 0_r$ for $r = 1 \vvirg s$ are critical, the tensor network variety may be regarded as a subvariety of 
\[
V_{0_1} \otimes V_{0_2} \ootimes V_{0_s} \otimes W_1 \ootimes W_d
\]
and its generic element, up to the action of $\GL(W_1) \ttimes \GL(W_d)$ is uniquely determined by generic $X_{0_1} \vvirg X_{0_s}$ with $X_{0_r} \in \Hom(W_{0_r}, V_{0_r}) = V_{0,r} \otimes E_{0_r i_r}^* \otimes E_{0_r j_r}^*$ for $r = 1 \vvirg s$. Each $X_{0_r}$ is a generic square pencil of size $m_r$, so by the Kronecker classification, we have $X_{0_r} = \bigoplus_{k=1}^{m_r} J_1(z_{r,k})$ for generic $z_{r,k} \in V_{0_r}$. Contracting each $X_{0_r}$ with the graph tensor $T(\Gamma,\bfm)$, we obtain the normal form 
\[
T = \textstyle \left(\bigoplus_{k=1}^{m_1} J_1(z_{1,k})\right) \boxtimes \cdots \boxtimes \left(\bigoplus_{k=1}^{m_s} J_1(z_{s,k})\right) \boxtimes T(\Gamma',\bfm').
\]
Write $\bfzeta_1 \vvirg \bfzeta_s$ for vectors in $\bbC^m$ determining the generalized eigenvalues of the pencils and write $A(\bfzeta_1 \vvirg \bfzeta_s)$ for the tensor in the normal form above. We obtain a parametrization of an open subset of $\calTNS^{\Gamma}_{\bfm,\bfn}$ via the map
\begin{align*}
\tilde{\Phi} : \bbC^{m_1} \times \cdots \times \bbC^{m_s} \times \GL(W_1) \ttimes \GL(W_d) &\to V_{0_1} \otimes \cdots \otimes V_{0_s} \otimes W_1 \ootimes W_d \\
 (\bfzeta_1, \vvirg, \bfzeta_s, X_1 \vvirg X_d)  & \mapsto ( X_1 \ootimes X_d) (A(\bfzeta_1, \vvirg, \bfzeta_s)).
\end{align*}
The dimension of the fiber of $\tilde{\Phi}$ determines the dimension of the tensor network variety. The same argument as in the proof of \autoref{augmentation:theorem} shows that for a generic $T \in \calTNS^{\Gamma}_{\bfm,\bfn}$, every element in the fiber $\tilde{\Phi}^{-1}(T)$ has the same components $\bfzeta_1 \vvirg \bfzeta_s$. Therefore, the fiber $\tilde{\Phi}^{-1}(T)$ is isomorphic, as a variety, to the stabilizer of $(A(\bfzeta_1, \vvirg, \bfzeta_s))$ under the action of $\GL(W_1) \ttimes \GL(W_d)$. This guarantees that the dimension of the fiber equals the dimension of the annihilator
\[
\ann_{\frakgl(W_1) \ttimes \frakgl(W_d)} (A(\bfzeta_1, \vvirg, \bfzeta_s)).
\] 
This is computed using induction on $s$ and \autoref{lemma: ann for block sum pencils}. We characterize the annihilator as
\begin{align*}
\ann_{\frakgl(W_1) \ttimes \frakgl(W_d)} &(A(\bfzeta_1, \vvirg, \bfzeta_s)) = \\ &=\left(\textstyle \bigoplus_{k=1}^{m_1} \fraksl_{m_{i_1 j_1}, k}\right) \oplus \cdots \oplus \left(\textstyle \bigoplus_{k=1}^{m_s} \fraksl_{m_{i_s j_s}, k}\right) \oplus \left(\textstyle \bigoplus_{e \in \bfe(\Gamma')} \fraksl(E_e)\right)
\end{align*}
with the action described in \autoref{cor:augmentation_stabilizer}.

If $s = 0$, the claim is trivial: in this case $A(\bfzeta_1 \vvirg \bfzeta_s) = T(\Gamma',\bfm')$ and the annihilator is the Lie algebra of the gauge group. 

If $s \ge 1$, we may write
\[
A(\bfzeta_1 \vvirg \bfzeta_s) = \left(\bigoplus_{k=1}^{m_1} J_1(z_{1,k})\right) \boxtimes \tilde{A}(\bfzeta_2 \vvirg \bfzeta_s)
\]
where $\tilde{A}(\bfzeta_2 \vvirg \bfzeta_s)$ is the tensor obtained by augmenting $\Gamma'$ at nodes $\{i_r,j_r\}$ for $r = 2 \vvirg s$. As in \autoref{augmentation:theorem}, we apply \autoref{lemma: ann for block sum pencils} iteratively. The spaces $M_1,M_2$ are trivial, because, as in \autoref{augmentation:theorem} and \autoref{thm: triangle mm1}, regarding $A(\bfzeta_1, \vvirg, \bfzeta_s)$ as a pencil, the generalized eigenvalues of the different Jordan blocks are distinct: this is guaranteed by the fact that the vertices $i_1,j_1$ are distinct from any $i_k, j_k$ for $k \geq 2$.

By \autoref{lemma: ann for block sum pencils}, we obtain that the annihilator decomposes as a direct sum of the annihilators of the different blocks summands $J_1(z_{1,k}) \boxtimes \tilde{A}(\bfzeta_2 \vvirg \bfzeta_s)$, which are all isomorphic to  ${A(\bfzeta_2 \vvirg \bfzeta_s)}$. By the induction hypothesis, we conclude that 
\begin{multline*}
\ann_{\frakgl(W_1) \ttimes \frakgl(W_d)} (A(\bfzeta_1, \vvirg, \bfzeta_s)) = \\
\left(\textstyle \bigoplus_{k=1}^{m_1} \fraksl_{m_{i_1 j_1}, k}\right) \oplus \cdots \oplus \left(\textstyle \bigoplus_{k=1}^{m_s} \fraksl_{m_{i_s j_s}, k}\right) \oplus \left(\textstyle \bigoplus_{e \in \bfe(\Gamma')} \fraksl(E_e)\right),
\end{multline*}
yielding the desired formula for the $\dim \calTNS^{\Gamma}_{\bfm,\bfn}$.
\end{proof}

\section*{Acknowledgments}
AB is a member of GNSAGA (INdAM).
AB has been partially supported  by European Union's HORIZON–MSCA-2023-DN-JD programme under the Horizon Europe (HORIZON) Marie Skłodowska-Curie Actions, grant agreement 101120296 (TENORS).
AB has been partially funded by the European Union under NextGeneration EU. PRIN 2022 Prot. n. 2022ZRRL4C 004. Views and opinions expressed are however those of the authors only and do not necessarily reflect those of the European Union or European Commission. Neither the European Union nor the granting authority can be held responsible for them.

We acknowledge the TensorDec Laboratory of the Department of Mathematics of the University of Trento, of which AB is currently a member, for helpful discussions. Part of this work was done while the authors were visiting the Simons Institute for the Theory of Computing, during the program \emph{Complexity and Linear Algebra} in Fall 2025: we thank the Simons Institute for its support and for providing an inspiring research environment. Finally, we are grateful to V. Lysikov, G. Ottaviani and J. Zuiddam for helpful discussions during the development of this work.

\bibliographystyle{alphaurl}
\bibliography{bibTNS.bib}

\end{document}